\newtheorem{Definition}[theorem]{Definition}
\newtheorem{assumption}[theorem]{Assumption}
\journalname{Noname}
\begin{document}
\title{A Lyapunov and Sacker-Sell spectral stability theory for one-step methods  \thanks{This research was supported in part by NSF grant DMS-1419047.}
}


\author{Andrew J. Steyer         \and
        Erik S. Van Vleck 
}


\institute{Andrew J. Steyer \at
              Sandia National Laboratories; P.O. Box 5800, MS 1320; 
Albuquerque, NM 87185-1320, USA  \\
              \email{asteyer@sandia.gov}           
           \and
          Erik S. Van Vleck \at
             Department of Mathematics - University of Kansas; 
1460 Jayhawk Blvd; Lawrence, KS 66045; USA \\
        \email{erikvv@ku.edu}
}

\date{Received: date / Accepted: date}

\maketitle

\begin{abstract}
Approximation theory for Lyapunov and Sacker-Sell spectra based upon QR techniques is used to analyze the stability of a one-step method solving a time-dependent, linear, ordinary differential equation (ODE) initial value problem in terms of the local error.  Integral separation is used to characterize the conditioning of stability spectra calculations.  In an approximate sense the stability of the numerical solution by a one-step method of a time-dependent linear ODE using real-valued, scalar, time-dependent, linear test equations is justified.  This analysis is used to approximate exponential growth/decay rates on finite and infinite time intervals and establish global error bounds for one-step methods approximating uniformly stable trajectories of nonautonomous and nonlinear ODEs.  A time-dependent stiffness indicator and a one-step method that switches between explicit and implicit Runge-Kutta methods based upon time-dependent stiffness are developed based upon the theoretical results.  
\keywords{ one-step methods \and stiffness \and Lyapunov exponents \and Sacker-Sell spectrum \and nonautonomous differential equations }
\subclass{65L04 \and 65L05 \and 65P40 \and 34D08  \and34D09}
\end{abstract}

\section{Introduction}\label{sec:introduction}

Stability plays a central role in determining the time asymptotic behavior of dynamical systems.  In the seminal works of Lyapunov \cite{lyap} and Dahlquist \cite{Dahlquist1956,Dahlquist1959,Dahlquist1963}, stability theories for ordinary differential equation (ODE) initial value problems (IVPs) and methods for their numerical solution were respectively established.  The stability of time-dependent (nonautonomous)  solutions to ODEs can be determined using a variety of techniques, but does not in general reduce to a time-dependent eigenvalue problem (see the third example on page 24 of \cite{Kreiss1978} or the example at the bottom of page 3 of \cite{CoppelBook1978}).  Understanding the stability of numerical methods approximating time-dependent solutions to ODE IVPs is important for preventing spurious computational modes, detecting and quantifying stiffness, and controlling the global error.  The complementary dynamical systems viewpoint is that the dynamics of numerical solutions should mimic the dynamics of differential equations.  In this paper we embrace both of these points of view and use Lyapunov and Sacker-Sell spectral theory to develop a time-dependent stability theory for one-step methods approximating solutions of ODE IVPs.

Our contribution is to establish a Lyapunov stability theory for variable step-size one-step methods approximating time-dependent solutions of ODE IVPs that can fail to satisfy the hypotheses of AN- and B-stability theories (see Equation \ref{eq:2dlin} below for an example of such an ODE).  We use integral separation, the time-dependent analog of gaps between eigenvalues, to characterize the conditioning of computations of time-dependent stability spectra.  The main results, Theorems \ref{thm:maintheorem1} and \ref{thm:maintheorem2}, harness the fact that the numerical solution of a nonautonomous linear ODE of dimension $d$ by a one-step method defines a nonautonomous linear difference equation of dimension $d$.   A time-dependent and orthogonal change of variables is employed to transform to a linear difference equation with an upper triangular coefficient matrix, from which spectral endpoints and integral separation properties can be determined from the diagonal entries.   Theorem \ref{thm:maintheorem1} concludes that if the coefficient matrix of the ODE is bounded and continuous, then the Sacker-Sell spectrum of the numerical solution approximates that of the ODE.  Theorem \ref{thm:maintheorem2} concludes that if the ODE has an integral separation structure, then the Lyapunov and Sacker-Sell spectrum of the numerical solution accurately approximate the spectra of the ODE in terms of the local truncation error.  The endpoints of the spectra can then be estimated from the diagonal entries of the resulting upper triangular coefficient matrix.  These theorems together with Lemma \ref{lem:localgrowthcts} are then used to justify characterizing the stability of a one-step method solving a nonautonomous linear ODE of dimension $d$ with $d$ scalar, real-valued, nonautonomous linear test equations.  We then show (Theorem \ref{thm:1dcounterexample}) the necessity of controlling time-dependent stability through a step-size restriction and prove Theorem \ref{thm:stabfcntheorem} showing that the stability of a Runge-Kutta method solving a complex-valued, scalar, nonautonomous linear test equation can be approximately characterized by when the time-averages of the coefficient function lie in the linear stability region of the method.

The linear stability results are applied to prove two theorems (Theorems \ref{thm:nonlin1} and \ref{thm:nonlin2}) on the numerical solution by a one-step method of a uniformly exponentially stable solution of a nonlinear and nonautonomous ODE.  Theorem \ref{thm:nonlin1} provides a global error bound for the numerical solution of a uniformly exponentially stable trajectory of a nonlinear IVP in terms of the local truncation error.  Theorem \ref{thm:nonlin2} shows that the numerical approximation of a uniformly exponentially stable trajectory is uniformly exponentially stable with decay rates approximately those of the exact solution.  The nonlinear results, which draw on the spirit of the one-step approximation theory developed in  \cite{Beyn1987}, \cite{Eirola1988}, and \cite{KL1986}, show that the spectral stability of the numerical solution of a nonlinear ODE IVP by a one-step method can be characterized and quantified in terms of the spectral stability of the numerical solution of the associated linear variational equation. 

The linear and nonlinear theoretical results are applied in Section \ref{sec:numexp}.  In Section \ref{sec:stiffdetect} we develop an efficient time-dependent stiffness indicator and in Section \ref{sec:qrimex} we develop a one-step method, referred to as a QR-IMEX-RK method, that switches between using implicit and explicit Runge-Kutta methods. Our stiffness indicator is computed using Steklov averages approximated from the discrete QR method for computing Lyapunov exponents \cite{DVV4}.  This indicator is in general more efficient to compute than methods such as that proposed in Definition 4.1 of \cite{GSC2015} that require approximating logarithmic norms or time-dependent eigenvalues and additionally our indicator is able to detect stiffness in IVPs with non-normal Jacobians where logarithmic norms and time-dependent eigenvalues can fail to indicate stiffness.  Being able to detect stiffness efficiently and robustly is necessary in the context of our QR-IMEX-RK methods where we switch between using an implicit or explicit Runge-Kutta method based on where approximate Steklov averages are at each time-step in relation to the linear stability regions of the explicit and implicit methods. 
 
The stability of numerical solutions of ODE IVPs is a classic topic in numerical analysis dating back at least to the PhD thesis of Dahlquist (published as \cite{Dahlquist1959}) and also \cite{Dahlquist1956,Dahlquist1963} where concepts such as A-stability were first introduced.  Other stability theories for the numerical solution of nonautonomous and nonlinear ODE IVPs, such as B-stability \cite{Butcher1975} or algebraic stability and AN-stability \cite{BurrageButcher1979} provide an analysis for various classes of ODEs that are monotonically contracting.  The equivalences amongst these nonlinear and nonautonomous stability theories are investigated in \cite{Butcher1987}.  In the case of Runge-Kutta methods the analysis in AN-, B-, and algebraic stability requires that the methods be implicit and at least A-stable while our analysis holds so long as the method is convergent.

The theory developed in this work is based on the time-dependent spectral stability theories of the Lyapunov and Sacker-Sell spectra.  We refer to the monograph \cite{ADR} by Adrianova as a general reference on time-dependent stability and related topics such as integral separation.  The theory of Lyapunov exponents and the associated Lyapunov spectrum arose from the thesis of Lyapunov \cite{lyap}.  The Sacker-Sell spectrum, defined by the values for which a shifted time-dependent linear ODE does or does not admit exponential dichotomy, first appears in the literature in the the fundamental 1978 paper \cite{SackerSell1978} of Sacker and Sell.  The Lyapunov spectrum characterizes the exponential stability while the Sacker-Sell spectrum characterizes the uniform exponential stability of a nonautonomous linear ODE or difference equation. 

In this paper we apply the QR approximation theory for Lyapunov and Sacker-Sell spectra (see e.g. \cite{DVV00,DVV1,DVV3,DVV7,DRVV}, \cite{CDVV2}, \cite{VV1}, and \cite{BVV}).  QR approximation theory constructs the orthogonal factor in a QR factorization of a fundamental matrix solution (in continuous or discrete time) to transform a linear system to one with an upper triangular coefficient matrix.  Then, assuming either that the system has an integral separation structure or a bounded and continuous coefficient matrix, the endpoints of the Lyapunov or Sacker-Sell spectrum respectively can be approximated from the diagonal entries of the transformed upper triangular matrix.

The development of our theory is motivated by the following ODE:
\begin{equation}\label{eq:2dlin}
\dot{x} = A(t)x, \quad A(t) = L(t)C(t)L(t)^T, \quad t > 0
\end{equation}
$$C(t) = \left[\begin{array}{cc}\lambda_1 & \beta(t) \\ 0 & \lambda_2\end{array}\right], \quad L(t) = \left[\begin{array}{cc}\cos(\omega(t)) & -\sin(\omega(t)) \\ \sin(\omega(t)) & \cos(\omega(t))\end{array} \right], $$
where $\lambda_1 > 0 > \lambda_2$ with $\lambda_1+\lambda_2 < 0$, $\beta(t) = \beta_0(1+ \cos(a_1 t)/(1+\beta_1 t^2))$, $\omega(t) = a_2 t$, $(\lambda_1+\lambda_2)^2-4(a_1(\beta_0+a_1)+\lambda_1 \lambda_2)>0$, and $a_1,a_2,\beta_0,\beta_1 \geq 0$.  The ODE \eqref{eq:2dlin} does not satisfy the hypotheses of B-stability theory since there exists $v,w\in \mathbb{R}^{2}$ so that $(v-w)^T A(0)^T (v-w) > 0$ nor AN-stability since $\lambda_1 > 0$ is one of the time-dependent eigenvalues of $A(t)$.  However, by using the change of variables $x = L(t)y$ and Theorem 4.3.2 of \cite{ADR}, it follows that zero is an asymptotically stable equilibrium of \eqref{eq:2dlin}.

If we solve \eqref{eq:2dlin} using the implicit Euler method with step-size $h_0>0$ and initial condition $(0,0)^T \neq x_0 \in \mathbb{R}^2$, then the numerical solution $\{x_n\}_{n=0}^{\infty}$ satisfies the following linear difference equation
\begin{equation}\label{eq:impeuler}
x_{n+1} = [I-h_0 A(t_{n+1})]^{-1} x_n, \quad n \geq 0.
\end{equation}
The implicit Euler method is AN- and L-stable and the expectation would be that it should produce a decaying solution to \eqref{eq:2dlin} with no step-size restriction.  However, if $a_1=a_2=2\pi$, $h_0 = 1$, and $\lambda_1  \in (0,1)$, then the solution of \eqref{eq:impeuler} with $x_0 \neq (0,0)^T$ is such that $\|x_n\| \rightarrow \infty$ as $n \rightarrow \infty$ at a rate of $(1-\lambda_1)^n$ where $\|\cdot\|$ is any norm on $\mathbb{R}^2$.  In Section \ref{sec:linearresults} we prove that there is an $h^* > 0$ so that if $h_0 \in (0,h^*$), then solutions of \eqref{eq:impeuler} decay to zero.

The rest of this paper is organized as follows. In Section \ref{sec:preliminaries} we introduce some definitions, notation, and necessary background material.  In Section \ref{sec:linearresults} we state Theorems \ref{thm:maintheorem1} and \ref{thm:maintheorem2} which are subsequently proved in Section \ref{sec:linearresultsproofs}.  We prove Theorems \ref{thm:1dcounterexample} and \ref{thm:stabfcntheorem} in Section \ref{sec:1dresults} which is dedicated to the thorough analysis of a scalar, nonautonomous linear test equation. The nonlinear stability results, Theorems \ref{thm:nonlin1} and \ref{thm:nonlin2}, are stated and proved in Section \ref{sec:nonlin}.  In Section \ref{sec:numexp} we develop a time-dependent stiffness indicator and an algorithm for switching between implicit and explicit Runge-Kutta methods based on time-dependent stiffness which are tested on several interesting and challenging problems.  Concluding remarks are given in Section \ref{sec:afterword} and Butcher tableaux with stability and accuracy properties for methods used in Section \ref{sec:numexp} are given in the appendix in Section \ref{sec:appendix}.


\section{Preliminaries}\label{sec:preliminaries}

\subsection{Stability of initial value problems}

Consider the nonautonomous and nonlinear ODE
 \begin{equation}\label{eq:ode}
 \dot{x} = f(x,t)
 \end{equation}
where $f:\mathbb{R}^{d}\times (\tau_0,\infty) \rightarrow \mathbb{R}^{d}$ for some positive integer $d$ and $\tau_0 \geq -\infty$.  We assume that $f(x,\cdot)$ is bounded for each fixed $x \in \mathbb{R}^d$ and that $f$ is at least $C^1(\mathbb{R}^{d}\times (\tau_0,\infty))$ and sufficiently smooth so that each IVP
\begin{equation}\label{eq:odeivp}
\left\{
\begin{array}{lcr}
\dot{x} = f(x,t)\\
x(t_0) = x_0
\end{array}
\right.
\end{equation}
has a unique and globally defined solution $x(t;x_0,t_0)$ for all initial conditions $x_0 \in \mathbb{R}^{d}$ and initial times $t_0 > \tau_0$.

Fix an arbitrary norm $\|\cdot\|$ on $\mathbb{R}^d$ and use the same symbol $\|\cdot\|$ to denote the induced matrix norm on $\mathbb{R}^{d\times d}$.  Henceforth, whenever we use the word stability we are referring to Lyapunov stability in either continuous or discrete time.  Assume that the solution $x(t;x_0,t_0)$ is bounded in $t$ and consider the linear variational equation:

\begin{equation}\label{eq:linvareq}
\dot{x} = A(t)x, \quad t > t_0, \quad A(t)=Df(x(t;x_0,t_0),t), \quad D:= \partial/\partial x.
\end{equation}
Since $x(t;x_0,t_0)$ is bounded in $t$ and $f$ is $C^1$ it follows that $A(\cdot)$ is bounded and continuous.
\begin{definition}
We say that \eqref{eq:linvareq} is exponentially stable if for any fundamental matrix solution $X(t)$ of \eqref{eq:linvareq} there exists $\gamma > 0$ and $K > 0$ so that 
\begin{equation}
\|X(t)\| \leq K e^{-\gamma (t-t_0)}\|X(t_0)\|, \quad t\geq t_0.
\end{equation}
\eqref{eq:linvareq} is said to be uniformly exponentially stable if for any fundamental matrix solution $X(t)$ of \eqref{eq:linvareq} there exists $\gamma > 0$ and $K > 0$ so that 
$$\|X(t)\| \leq K e^{-\gamma (t-s)}\|X(s)\|, \quad t\geq s \geq t_0.$$
\end{definition}
We characterize exponential and uniform exponential stability using Lyapunov and Sacker-Sell spectra (see \cite{DVV3} for a review of the definitions of these spectra).  If the Lyapunov spectrum of \eqref{eq:linvareq} is contained in $(-\infty,0)$, then \eqref{eq:linvareq} is exponentially stable.  A sufficient condition for uniform exponential stability of zero is that the Sacker-Sell spectrum of \eqref{eq:linvareq} is contained in $(-\infty,0)$.  The linear concepts of exponential stability have the following analogous definitions in the nonlinear setting.
\begin{definition}
The trajectory $x(t;x_0,t_0)$ is exponentially stable if there exists $\gamma,K, \delta > 0$ so that if $\|u_0-x_0\| < \delta$, then $\|x(t;u_0,t_0)-x(t;x_0,t_0)\| \leq K e^{-\gamma(t-t_0)}\|u_0-x_0\|$ for all $t \geq t_0$.  We say that $x(t;x_0,t_0)$ is uniformly exponentially stable if there exists $\gamma,K, \delta > 0$ so that for any $t \geq s \geq t_0$, if $\|u_s-x(s;x_0,t_0)\| < \delta$, then $\|x(t;u_s,s)-x(t;x_0,t_0)\| \leq K e^{-\gamma(t-s)}\|u_s-x(s;x_0,t_0)\|$.
\end{definition}
If the linear variational equation \eqref{eq:linvareq} of $x(t;x_0,t_0)$ is uniformly exponentially stable and $f$ is sufficiently smooth, then $x(t;x_0,t_0)$ is a uniformly exponentially stable trajectory of \eqref{eq:ode}.  However, if the linear variational equation of $x(t;x_0,t_0)$ is exponentially stable, but not uniformly exponentially stable, then we cannot even guarantee that $x(t;x_0,t_0)$ is stable (see \cite{Perron1930} or Equation 14 in \cite{LK2007} for an example).

\subsection{One-step methods}

A one-step method is an approximation to solutions of ODE IVPs \eqref{eq:odeivp} of the form 
\begin{equation}\label{eq:onestep}
x_{n+1} = \varphi(x_n,t_n; f,h)
\end{equation}
where $x_n \approx x(t_n;x_0,t_0)$, $f=f(x,t)$ is the right-hand side function of \eqref{eq:ode}, $h$ is a sequence of step-sizes $h = \{h_n\}_{n=0}^{\infty}$ which we always assume is such that $0 < \text{inf}_{n \geq 0}h_n \leq \text{sup}_{n \geq 0}h_n < \infty$, and  $t_{n+1} = t_n + h_n$ for all $n \geq 0$.  We let $\|\cdot\|_{\infty}$ denote the $l^{\infty}$ norm for sequences with $\|h\|_{\infty} = \text{sup}_{n \geq 0}h_n$.  We say that the one-step method \eqref{eq:onestep} has local truncation error of order $p \in \mathbb{N}$ if there exists $h^* > 0$ so that if $f \in \mathbb{C}^{p+1}$ and $\|h\|_{\infty} \in (0,h^*)$, then the Taylor expansion of any solution $x:(t_0,\infty)\rightarrow \mathbb{R}^d$ of \eqref{eq:ode} is:
$$x(t_{n+1}) -\varphi(x(t_n),t_n; f(x(t_n),t_n),h) = K_n h_n^{p+1}, \quad n \geq 0.$$
where $K_n= K(t_n)$ defines some sequence depending on $x(t)$ and its derivatives (in particular $\{K_n\}_{n=0}^{\infty}$ will be bounded when $x(t)$ is bounded in $t$).  The numerical solution of a linear ODE of the form \eqref{eq:linvareq} using a sequence of step-sizes $h=\{h_{n}\}_{n=0}^{\infty}$ by a one-step method with local truncation error of order $p \geq 1$ is a nonautonomous linear difference equation of the form $x_{n+1} = \Phi^A(n;h)x_n$.  This fact is exploited throughout the remainder of the paper.

\subsection{Spectral theory for continuous time systems}

Consider the following $d$ dimensional nonautonomous linear ODE
\begin{equation}\label{eq:lineq}
\dot{x} = A(t)x, \quad t > t_0
\end{equation}
where $A:(t_0,\infty)\rightarrow \mathbb{R}^{d \times d}$ is bounded and continuous.  The continuous QR method for transforming \eqref{eq:lineq} to upper triangular form is as follows.  Consider the following ODE \cite{DRVV}:
\begin{equation}\label{eq:Qeqn}
\dot{Q}(t) = Q(t)S(Q(t),A(t)), \quad S(Q,A)_{ij} = \left\{
\begin{array}{cc}
(Q^T AQ)_{i,j}, & \quad i>j \\
0, & \quad i=j \\
-(Q^T A Q)_{i,j}, & \quad i<j 
\end{array}
\right. .
\end{equation}

Each orthogonal matrix solution $Q(t) \in \mathbb{R}^{d\times d}$ of \eqref{eq:Qeqn} defines a linear system
\begin{equation}\label{eq:lineqB0}
\dot{y} = B(t)y, \quad Q^T(t)A(t)Q(t)-Q^T(t)\dot{Q}(t), \quad t > t_0
\end{equation}
where $B(t)$ is upper triangular.  We refer to \eqref{eq:lineqB0} as a corresponding upper triangular system (or ODE) to \eqref{eq:lineq}. Since $x = Q(t)y$ is a Lyapunov transformation the Lyapunov and Sacker-Sell spectral intervals of \eqref{eq:lineq} coincide with those of any corresponding upper triangular system.

\begin{theorem}[Theorems 2.8, 5.5, and 6.1 of \cite{DVV3}]\label{thm:bddctsthm}
Let $B:(t_0,\infty) \rightarrow \mathbb{R}^{d\times d}$ be bounded, continuous, and upper triangular and let $\Sigma_{ED} = \cup_{i=1}^{d}[\alpha_i,\beta_i]$ denote the Sacker-Sell spectrum of the ODE $\dot{y} = B(t)y$.  For $i=1,\hdots,d$ we have:
\begin{equation}\label{eq:SSformula}
\alpha_i = \liminf_{0 < H\rightarrow \infty}\left(\inf_{t \geq t_0}\dfrac{1}{H}\int_{t}^{t+H}B_{i,i}(\tau)d\tau\right), \quad \beta_i = \limsup_{0 < H\rightarrow \infty}\left(\sup_{t \geq t_0}\dfrac{1}{H}\int_{t}^{t+H}B_{i,i}(\tau)d\tau\right).
\end{equation}\qed

\end{theorem}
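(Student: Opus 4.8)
The plan is to compute the Sacker--Sell (exponential dichotomy) spectrum $\Sigma_{ED}$ of $\dot y = B(t)y$ in two stages: reduce it to the $d$ scalar problems $\dot y_j = B_{j,j}(t)y_j$, and then evaluate the spectrum of each scalar problem directly in terms of windowed time averages, obtaining the endpoints in \eqref{eq:SSformula}. Throughout, $\lambda\in\mathbb R$ is fixed and one works with the shifted system $\dot y = (B(t)-\lambda I)y$, asking whether it admits an exponential dichotomy; since $B$ is bounded and continuous the dichotomy spectrum is closed and the roughness theory of \cite{CoppelBook1978,SackerSell1978} may be used freely.

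\emph{Reduction to the diagonal.} I would prove $\Sigma_{ED}=\bigcup_{j=1}^{d}\Sigma^{(j)}$, where $\Sigma^{(j)}$ denotes the Sacker--Sell spectrum of the scalar equation $\dot y_j=B_{j,j}(t)y_j$. For $\Sigma_{ED}\subseteq\bigcup_j\Sigma^{(j)}$, assume every scalar equation $\dot y_j=(B_{j,j}-\lambda)y_j$ has an exponential dichotomy; a dichotomy for the full shifted system is then assembled by solving the triangular system from the bottom row upward, at each step regarding the already-determined components as a forcing term: feeding an exponentially decaying, respectively growing, forcing into an exponentially dichotomic scalar equation produces an exponentially decaying response on its stable fiber, respectively an exponentially growing response, the only overhead being polynomial-in-$(t-s)$ factors absorbed by slightly relaxing the rate, and the successive projectors combine compatibly because dichotomy of every diagonal equation produces a genuine exponential gap at the level $\lambda$. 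For the reverse inclusion, note that each coordinate subspace $V_j=\{y:y_{j+1}=\cdots=y_d=0\}$ is invariant with induced flow an upper triangular block, and iterating restriction to $V_j$ and passage to the coordinate quotients reduces matters to the claim that an exponential dichotomy of an upper triangular system restricts to its invariant coordinate subspaces and descends to the coordinate quotients; this I would establish by observing that a nonzero solution confined to an invariant subspace is, in projective coordinates, uniformly repelled from the stable bundle and uniformly attracted to the unstable bundle, so that its norm ratios $\|z(t)\|/\|z(s)\|$ obey uniform exponential bounds of one sign. Boundedness of $B$ enters here to control the variation-of-constants overhead and to keep the restricted and quotiented flows governed by bounded coefficients so that roughness applies.

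\emph{The scalar spectrum and conclusion.} For a scalar equation $\dot y=b(t)y$ with $b$ bounded and continuous the transition operator from $s$ to $t$ is $\exp\!\big(\int_s^t b\big)$, so $\dot y=(b-\lambda)y$ admits an exponential dichotomy precisely when it admits a stable one, $\int_s^t b\le\log K+(\lambda-\gamma)(t-s)$ for all $t\ge s\ge t_0$ and some $K,\gamma>0$, or an unstable one, $\int_s^t b\ge-\log K+(\lambda+\gamma)(t-s)$ for all $t\ge s\ge t_0$. The stable condition is equivalent to $\limsup_{0<H\to\infty}\big(\sup_{t\ge t_0}\tfrac1H\int_t^{t+H}b\big)<\lambda$: one direction follows by dividing by $t-s=H$ and letting $H\to\infty$, and for the other, if the $\limsup$ equals $\mu<\lambda$ one fixes $\nu\in(\mu,\lambda)$ and $H_0$ with $\int_t^{t+H_0}b\le\nu H_0$ for all $t$, writes $t-s=mH_0+r$ with $0\le r<H_0$, and sums the estimate over the $m$ windows, the leftover interval contributing a bounded error; this is a Fekete-type subadditivity argument. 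Symmetrically the unstable condition is equivalent to $\liminf_{0<H\to\infty}\big(\inf_{t\ge t_0}\tfrac1H\int_t^{t+H}b\big)>\lambda$. Writing $\alpha_j$ and $\beta_j$ for the liminf and limsup expressions of \eqref{eq:SSformula} with $b=B_{j,j}$ (both finite since $B$ is bounded, with $\alpha_j\le\beta_j$ because $\inf\le\sup$ and $\liminf\le\limsup$), one gets $\lambda\notin\Sigma^{(j)}$ exactly when $\lambda<\alpha_j$ or $\lambda>\beta_j$, so $\Sigma^{(j)}=[\alpha_j,\beta_j]$; combined with the reduction this gives $\Sigma_{ED}=\bigcup_{j=1}^{d}[\alpha_j,\beta_j]$ with endpoints as in \eqref{eq:SSformula}. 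I expect the reverse inclusion in the reduction step to be the main obstacle: showing that an exponential dichotomy of an upper triangular system restricts to and descends from its coordinate subspaces with all constants uniform, which is where the projective estimate and roughness carry the weight, whereas the scalar computation is in essence subadditivity.
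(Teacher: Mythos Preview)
The paper does not prove this theorem; it is quoted from \cite{DVV3} (Theorems 2.8, 5.5, and 6.1 there) and closed with \qed{} immediately after the statement, so there is no in-paper argument to compare against. Your two-stage plan---reduce to the diagonal via the triangular structure, then compute each scalar spectrum by a Fekete-type subadditivity argument---is the standard route and is correct. On the reduction step you flagged as the main obstacle: both inclusions are most cleanly handled by the $2\times 2$ block lemma that a block upper triangular system with bounded continuous coefficients has an exponential dichotomy if and only if both diagonal blocks do, applied inductively down to the scalar diagonal. The ``only if'' direction is exactly your restriction/quotient observation (exponential dichotomy restricts to closed invariant subspaces and descends to quotients; see \cite{CoppelBook1978}). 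For the ``if'' direction, rather than assembling solutions bottom-up, it is tidier to write the full transition matrix in block form and exhibit the projector as $P=\bigl(\begin{smallmatrix}P_1 & R\\ 0 & P_2\end{smallmatrix}\bigr)$, where the cross term $R$ is obtained by integrating the off-diagonal coupling against the Green's kernels of the two given dichotomies; boundedness of the off-diagonal block together with the exponential rates on $\Phi_1$ and $\Phi_2$ makes all the integrals converge with uniform constants, so no polynomial overhead needs to be absorbed. Your scalar computation is correct as written: the single-window estimate at some $H_0$ plus boundedness of $B_{j,j}$ on the leftover interval of length $<H_0$ gives the uniform linear bound, and the converse is immediate.
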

For a bounded and continuous $A(\cdot)$, the Sacker-Sell spectrum of \eqref{eq:lineq} is continuous with respect to $L^{\infty}(t_0,\infty)$ perturbations of $A(t)$ (for a proof see Theorem 6 of \cite{SackerSell1978} or Chapter 4 of \cite{CoppelBook1978}).  For the Lyapunov spectrum to be continuous an additional hypothesis must be placed on \eqref{eq:lineq}.

\begin{Definition}\label{def:iss}
Suppose that $B:(t_0,\infty) \rightarrow \mathbb{R}^{d\times d}$ is bounded, continuous, and upper triangular and that for any $i <j$ one of the two following conditions hold:
\begin{enumerate}
\item $B_{i,i}$ and $B_{j,j}$ are integrally separated: there exists $a_{i,j} > 0$ and $b_{i,,j} \in \mathbb{R}$ so that if $t \geq s \geq t_0$, then
\begin{equation}\label{eq:IS}
\int_{s}^{t}B_{i,i}(\tau) - B_{j,j}(\tau)d\tau \geq a_{i,j}(t-s)+b_{i,j}.
\end{equation}
\item For every $\varepsilon > 0$ there exists $M_{i,j}(\varepsilon) > 0$ so that if $t \geq s \geq t_0$, then
\begin{equation}\label{eq:nonIS}
\left|\int_{s}^{t}B_{i,i}(\tau)-B_{j,j}(\tau) d\tau\right| \leq M_{i,j} + \varepsilon(t-s).
\end{equation}
\end{enumerate}
Then we say that $\dot{y}=B(t)y$ and $B(t)$ have an integral separation structure.  If the first condition is satisfied for all $i < j$, then we say that $B(t)$ and $\dot{y}=B(t)y$ are integrally separated.   If the system \eqref{eq:lineq} has a corresponding upper triangular system that has an integral separation structure, then we say that \eqref{eq:lineq} has an integral separation structure and if the corresponding upper triangular system is integrally separated, then we say that \eqref{eq:lineq} is integrally separated.
\end{Definition}
 Integral separation is a generic property (see page 21 of \cite{Palmer}) for linear equations on the half-line with the sup-norm topology.   This, together with the following theorem, show why it is natural to assume that a linear equation \eqref{eq:lineq} has an integral separation structure when approximating Lyapunov spectral intervals.
\begin{theorem}[Theorem 5.1 in \cite{DVV3}]\label{thm:suffconstablyap}
Assume that $B:(t_0,\infty) \rightarrow \mathbb{R}^{d\times d}$ has an integral separation structure and let $\Sigma_L = \cup_{i=1}^{d}[\eta_i,\mu_i]$ denote the Lyapunov spectrum of the ODE $\dot{y} = B(t)y$.  Then the Lyapunov spectrum of $\dot{y}=B(t)y$ is continuous with respect to $L^{\infty}(t_0,\infty)$ perturbations of $B(t)$ and for $i=1,\hdots,d$ we have:
\begin{equation}\label{suffconstablyap.2}
\eta_i =\liminf_{0 < t\rightarrow \infty}\dfrac{1}{t}\int_{t_0}^{t_0+t}B_{i,i}(\tau)d\tau, \quad \mu_i =\limsup_{0 < t\rightarrow \infty}\dfrac{1}{t}\int_{t_0}^{t_0+t}B_{i,i}(\tau)d\tau.
\end{equation}\qed
\end{theorem}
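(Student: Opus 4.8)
The plan is to reduce the assertion to a one-line computation for a purely diagonal system, with all of the work going into showing that the strictly upper triangular part of $B$ is irrelevant to the Lyapunov exponents; the continuity statement then follows from the roughness of integral separation. Concretely, I would first partition $\{1,\dots,d\}$ into blocks by declaring $i\sim j$ exactly when the pair $(i,j)$ satisfies the non-integrally-separated alternative \eqref{eq:nonIS}. Adding the bounds in \eqref{eq:nonIS} shows that $\sim$ is transitive, and its classes $I_1,\dots,I_m$ are intervals of consecutive indices: if $i<j<k$ with $i\sim k$, then \eqref{eq:IS} holding for $(i,j)$ or for $(j,k)$ would, combined with either alternative for the remaining pair and a small enough $\varepsilon$, force integral separation of $(i,k)$, a contradiction. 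Hence $B(t)$ is block upper triangular with diagonal blocks $B^{(1)},\dots,B^{(m)}$ indexed by $I_1,\dots,I_m$, where every diagonal entry in an earlier block is integrally separated above every diagonal entry in a later block and, inside each block, every pair of diagonal entries satisfies \eqref{eq:nonIS}. Taking $s=t_0$ in \eqref{eq:nonIS} gives $\tfrac1t\int_{t_0}^{t_0+t}(B_{ii}-B_{jj})\,d\tau\to 0$ whenever $i\sim j$, so $\eta_i$ and $\mu_i$ are constant on each block, say with common values $\eta^{(r)},\mu^{(r)}$ on block $r$.

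\textbf{Reduction to the diagonal system.} The claim is that $\dot y=B(t)y$ has the same Lyapunov spectrum as $\dot z=\operatorname{diag}(B_{11},\dots,B_{dd})(t)z$, whose spectrum equals $\bigcup_i[\eta_i,\mu_i]$ by the scalar cases of \eqref{suffconstablyap.2}. I would prove this in two steps. \emph{Across blocks:} because distinct blocks are integrally separated, the change of variables putting the system in block-diagonal form solves a Sylvester-type differential equation which, by the standard exponential-dichotomy estimate driven by \eqref{eq:IS}, is bounded with bounded inverse; being a Lyapunov transformation it preserves the spectrum. \emph{Within a block:} backward substitution writes each solution component as $\exp\!\big(\int_{t_0}^{\cdot}B_{ii}\big)$ times a combination of iterated integrals whose kernels have the form $\exp\!\big(\int_s^{\cdot}(B_{kk}-B_{ii})\big)$; by \eqref{eq:nonIS} each such kernel is at most $C_\varepsilon e^{\varepsilon(\cdot-s)}$ for every $\varepsilon>0$, and the super-diagonal entries of $B$ are bounded, so every correction factor is $O(e^{\varepsilon(t-t_0)})$ and cannot shift an exponent. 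Thus the spectral intervals of a block coincide with those of its diagonal part, which yields \eqref{suffconstablyap.2}.

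\textbf{Continuity.} For an $L^\infty(t_0,\infty)$ perturbation $B\mapsto B+E$, I would first re-triangularize: solving the $Q$-equation \eqref{eq:Qeqn} for $B+E$ and using that $Q\equiv I$ solves it for the already triangular $B$, the orthogonal factor stays within $O(\|E\|_\infty)$ of a constant orthogonal matrix---the dichotomy provided by the across-block integral separation is what keeps this solution controlled globally in $t$---so a corresponding upper triangular coefficient $\tilde B$ exists with $\|\tilde B-B\|_{L^\infty}\le c\|E\|_\infty$. Once $\|E\|_\infty$ is below half of each relevant constant $a_{i,j}$, the pairs satisfying \eqref{eq:IS} for $B$ still satisfy it for $\tilde B$, so $\tilde B$ again has an integral separation structure whose blocks refine the $I_r$; then \eqref{suffconstablyap.2} applied to $\tilde B$ shows each endpoint $\eta^{(r)},\mu^{(r)}$ moves by at most $c\|E\|_\infty$, and each spectral interval can only split into subintervals lying within $c\|E\|_\infty$ of it. This is the asserted continuity.

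\textbf{Main obstacle.} The delicate step is the reduction to the diagonal system: quantifying that the strictly upper triangular coupling contributes only subexponential factors needs the uniform-in-$s$ strength of \eqref{eq:IS}--\eqref{eq:nonIS} rather than mere asymptotic equality of averages, and the across-block step is precisely the classical ``integral separation $\Rightarrow$ roughness of the Lyapunov spectrum'' reducibility result. The global-in-$t$ control of the $Q$-equation needed for the continuity statement is the second place where care is required.
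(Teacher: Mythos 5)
The paper does not supply a proof of this theorem: it is quoted verbatim as Theorem 5.1 of \cite{DVV3}, and the closing \texttt{\textbackslash qed} marks the end of the statement rather than of an argument. There is therefore no in-paper proof to compare against; what follows is an assessment of your sketch on its own merits.

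Your outline is the standard Bylov--Millionshchikov--Adrianova/Dieci--Van~Vleck route, and its architecture is sound. The block decomposition is correct: the non-integrally-separated alternative \eqref{eq:nonIS} is closed under addition, so $\sim$ is transitive, and the interlacing argument you give does rule out a non-contiguous equivalence class (if $(i,j)$ or $(j,k)$ satisfies \eqref{eq:IS}, choosing $\varepsilon$ below half the relevant $a_{\cdot,\cdot}$ forces \eqref{eq:IS} for $(i,k)$). The observation that \eqref{eq:nonIS} with $s=t_0$ kills the difference of Steklov averages, hence makes the endpoints block-constant, is also right. Two places are thinner than the rest. First, the within-block reduction as written only controls the exponent from above: showing the solution exponents cannot fall \emph{below} $\eta_i$ requires picking out the ``triangular'' solutions with $y_j(t_0)=0$ for $j>i$ (so the $i$th component is exactly $\exp\int B_{ii}$) and then arguing the higher components grow no faster, which again uses \eqref{eq:nonIS}; you should say this explicitly rather than rely on the informal ``corrections cannot shift an exponent.'' Second, the continuity paragraph compresses the hardest step --- global-in-$t$ boundedness of the perturbed $Q$-factor --- into one clause; this is exactly where the dichotomy furnished by \eqref{eq:IS} across blocks must be invoked quantitatively (the within-block rotations are \emph{not} controlled pointwise, only the induced block filtration is), and the conclusion is that $\tilde B$ stays $O(\|E\|_\infty)$ close to $B$ in $L^\infty$ modulo a possibly $t$-dependent block-diagonal orthogonal conjugation that does not affect diagonal averages. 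You flag both points as the ``main obstacle,'' which is fair, but a referee would want at least the dichotomy estimate for the Sylvester/Riccati equation stated, since it is the load-bearing lemma.
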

We remark that if \eqref{eq:lineq} has a corresponding upper triangular system that has an integral separation structure (respectively is integrally separated), then every corresponding upper triangular system has an integral separation structure (respectively is integrally separated).  If the system \eqref{eq:lineq} does not have an integral separation, then the Lyapunov spectrum may be unstable (see Example 5.4.2 of \cite{ADR}).  Theorems \ref{thm:bddctsthm} and \ref{thm:suffconstablyap} are the basis for the assumptions that we place on \eqref{eq:lineq} in Section \ref{sec:mainresults}.

\subsection{Spectral theory for discrete time systems}

Consider a family of nonautonomous linear difference equations of the form
\begin{equation}\label{eq:dislineq}
x_{n+1} = \Phi^A(n;h)x_n, \quad n \geq 0
\end{equation}
where $x_n \in \mathbb{R}^{d}$, $h=\{h_n\}_{n=0}^{\infty}$ is a sequence of step-sizes, and each $\{\Phi^A(n;h)\}_{n=0}^{\infty}$ is a bounded matrix sequence where each $\Phi^A(n;h) \in \mathbb{R}^{d\times d}$ is invertible.

We construct an orthogonal change of variables transforming \eqref{eq:dislineq} to upper triangular form as follows.  Let $Q_0 \in \mathbb{R}^{d\times d}$ be an orthogonal matrix and fix some step-size sequence $h$.  Since $\Phi^A(n;h)$ is invertible for all $n \geq 0$ we can form unique QR factorizations $\Phi^A(n;h) Q_n = Q_{n+1} R^A(n;h)$ where $Q_{n+1} \in \mathbb{R}^{d\times d}$ is orthogonal and $R^A(n;h) \in \mathbb{R}^{d\times d}$ is upper triangular with positive diagonal entries. This process is referred to as a discrete QR iteration.  The system $u_{n+1} = R^A(n;h)u_n$ where $R^A(n;h) = Q_{n+1}^T \Phi^A(n;h)Q_n$ is referred to as a corresponding upper triangular system and its Lyapunov and Sacker-Sell spectra coincide with those of \eqref{eq:dislineq}.

\begin{theorem}[Section 5.1 of \cite{BredaVanVleck2014} or Corollary 3.25 of \cite{Potzsche2012}]\label{thm:ssdiss}
Fix some step-size sequence $h$ and assume that the sequence $\{R^A(n;h)\}_{n=0}^{\infty}$ is bounded and that each $R^A(n;h)$ is invertible and upper triangular.  Let $\Sigma_{ED}^{A} = \cup_{i=1}^{d}[\alpha_i^A,\beta_i^A]$ denote the Sacker-Sell spectrum of $u_{n+1} = R^A(n;h)u_n$.  Then for $i=1,\hdots,d$ we have
\begin{equation}
\small
\alpha_i^A = \liminf_{1 \leq m \rightarrow \infty}\left(\inf_{n \geq 0}\dfrac{1}{t_{n+m}-t_n}\ln|\prod_{k=n+m}^{n}R_{i,i}^{A}(k;h)|\right), \quad \beta_i^A = \limsup_{1 \leq m \rightarrow \infty}\left(\sup_{n \geq 0}\dfrac{1}{t_{n+m}-t_n}\ln|\prod_{k=n+m}^{n}R_{i,i}^{A}(k;h)|\right).\end{equation} \qed
\end{theorem}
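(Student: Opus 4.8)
\medskip
\noindent\emph{Proof proposal.}
The plan is to reduce to the scalar case and then induct on the dimension $d$, using at each stage the upper triangular structure of $R^A(n;h)$. Recall that $\lambda\in\mathbb{R}$ lies in the Sacker--Sell spectrum of $u_{n+1}=R^A(n;h)u_n$ precisely when the $\lambda$-shifted equation $u_{n+1}=e^{-\lambda(t_{n+1}-t_n)}R^A(n;h)u_n$ fails to admit an exponential dichotomy, where---because the step-sizes vary---``exponential dichotomy'' is taken in the $t_n$-weighted sense, i.e.\ with estimates $\|\Phi(n,m)P(m)\|\le Ke^{-\gamma(t_n-t_m)}$ for $n\ge m$ and $\|\Phi(n,m)(I-P(m))\|\le Ke^{-\gamma(t_m-t_n)}$ for $n\le m$, $\Phi(\cdot,\cdot)$ being the transition matrix of the shifted equation. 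I would first record that the boundedness and invertibility hypotheses force the diagonal entries $R^A_{1,1}(n;h),\dots,R^A_{d,d}(n;h)$ to be bounded above and bounded away from zero uniformly in $n$, and that, since $0<\inf_n h_n\le\sup_n h_n<\infty$, the weight $t_{n+m}-t_n$ is comparable to $m$; these two facts already imply that the numbers $\alpha_i^A,\beta_i^A$ in the asserted formulas are finite with $\alpha_i^A\le\beta_i^A$.

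For $d=1$ the transition map of the shifted equation over a window of $m$ consecutive indices starting at $n$ is the scalar $e^{-\lambda(t_{n+m}-t_n)}\prod_{k=n}^{n+m-1}R^A_{1,1}(k;h)$, and since the dichotomy projector of a scalar equation is either $0$ or the identity there are only two possibilities. A dichotomy with projector the identity means uniform exponential decay of this scalar, which---after upgrading ``eventually in the window length $m$'' to ``for all $n$ and $m$ with a single constant $K$'' by a standard subadditivity argument together with the uniform bounds on $R^A_{1,1}$---holds iff $\beta_1^A<\lambda$; a dichotomy with projector $0$ means uniform exponential growth and holds iff $\alpha_1^A>\lambda$. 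Hence the resolvent set is $(-\infty,\alpha_1^A)\cup(\beta_1^A,\infty)$ and the scalar Sacker--Sell spectrum equals $[\alpha_1^A,\beta_1^A]$, which is the claim for $d=1$.

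For the induction step I would split $R^A(n;h)$ into $1+(d-1)$ blocks: the scalar $(1,1)$-entry $R^A_{1,1}(n;h)$, a bounded coupling row $r(n)^T$ above a zero block, and an upper triangular, bounded, invertible $(d-1)\times(d-1)$ tail block $\widetilde{R}(n)$. A solution $(u_n,v_n)$ of the shifted equation then has a decoupled $v$-component satisfying $v_{n+1}=e^{-\lambda(t_{n+1}-t_n)}\widetilde{R}(n)v_n$, while $u_n$ solves the scalar shifted equation forced by $e^{-\lambda(t_{n+1}-t_n)}r(n)^Tv_n$. The key lemma is: for a fixed $\lambda$, the shifted full system admits an exponential dichotomy iff both the shifted scalar equation and the shifted tail equation do. The ``only if'' direction follows by restricting to the invariant line $\mathrm{span}(e_1)$ and passing to the quotient, using the standard fact that exponential dichotomies are inherited by invariant subbundles and by quotients. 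For the ``if'' direction one constructs the projector of the full system as a block-triangular perturbation $\bigl[\begin{smallmatrix}p(n)&\xi(n)\\0&\widetilde{P}(n)\end{smallmatrix}\bigr]$ of the two blocks' projectors, where $\xi(n)$ is the bounded solution of a linear difference equation driven by $r(n)$; this solution is produced as a discrete variation-of-constants (Green's function) series whose geometric convergence in the $t_n$-weighted sense is guaranteed by the two dichotomy exponents and the boundedness of $r(n)$, after which the resulting $P(n)$ is checked to satisfy the dichotomy estimates. Granting the lemma, $\lambda$ lies in the resolvent set of the full system iff it lies in the resolvent sets of the scalar block and of $\widetilde{R}(n)$; the scalar case rules out $[\alpha_1^A,\beta_1^A]$ and the induction hypothesis rules out $\bigcup_{i=2}^{d}[\alpha_i^A,\beta_i^A]$, so $\Sigma_{ED}^A=\bigcup_{i=1}^{d}[\alpha_i^A,\beta_i^A]$ with the stated endpoint formulas.

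The main obstacle is the ``if'' direction of the block-triangular lemma: it is a roughness-type argument for exponential dichotomy, but the perturbation (the triangular coupling $r(n)^T$) is merely bounded rather than small, so no generic roughness theorem applies directly and the Green's function series must be estimated by hand, carefully tracking the $t_n$-weighting---this is exactly where the uniform positivity and finiteness of the step-sizes are used in an essential way. Everything else---finiteness of $\alpha_i^A,\beta_i^A$, the scalar computation, and the subspace/quotient arguments of the ``only if'' direction---is routine given the boundedness and invertibility hypotheses. This is the discrete, variable-step-size analogue of Theorems 2.8, 5.5, and 6.1 of \cite{DVV3}; one could alternatively derive it by transferring the continuous-time argument through the correspondence between a one-step discretization and its generating ODE, but the direct discrete proof sketched above seems cleaner.
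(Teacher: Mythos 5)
The paper does not prove Theorem~\ref{thm:ssdiss}; it quotes it as a known result from Section~5.1 of \cite{BredaVanVleck2014} and Corollary~3.25 of \cite{Potzsche2012}, so there is no in-paper proof to compare against line by line. Your strategy---scalar case plus induction on $d$ through a block-triangular decoupling lemma for exponential dichotomies---is the standard route behind those references, and your identification of the "if'' direction of that lemma (constructing the projector from a Green's-function series, with the $t_n$-weighted geometry tracked carefully) as the real work is accurate. The "only if'' via invariant subbundle and quotient, the scalar Bohl-exponent computation, and the subadditivity upgrade from "eventually'' to "uniform'' are all correctly placed.

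There is, however, a genuine gap in the opening step. You assert that boundedness of $\{R^A(n;h)\}$ together with invertibility of each $R^A(n;h)$ \emph{forces} the diagonal entries to be bounded away from zero uniformly in $n$, and you use this to conclude finiteness of $\alpha_i^A,\beta_i^A$. That implication is false: each $R^A(n;h)$ being invertible only gives $R^A_{i,i}(n;h)\ne 0$ for every $n$, and boundedness gives an upper bound, but nothing prevents $R^A_{i,i}(n;h)\to 0$ (e.g.\ $R^A_{1,1}(n;h)=1/(n+1)$), in which case $\alpha_1^A=-\infty$ and the asserted spectral decomposition into $d$ closed \emph{bounded} intervals fails. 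What is actually needed is a uniform lower bound $\inf_{n\ge 0}|R^A_{i,i}(n;h)|>0$ (equivalently, $\{R^A(n;h)^{-1}\}$ bounded); this is a standard standing hypothesis in the discrete Sacker--Sell literature, is explicitly imposed elsewhere in the paper (Definition~\ref{adiss}), and is implicit in the theorem's notation $\Sigma_{ED}^A=\cup_{i=1}^d[\alpha_i^A,\beta_i^A]$ with finite endpoints. You should state it as an assumption rather than claim to derive it; with that correction the rest of the sketch goes through, and the remaining work is precisely the Green's-function estimates you flag as the main obstacle.
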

Theorem 4.1 of \cite{Potzsche2013} implies that the Sacker-Sell spectrum of \eqref{eq:dislineq} is continuous with respect to $l^{\infty}(\mathbb{N})$ perturbations of the coefficient matrix.  Discrete integral separation characterizes when the Lyapunov spectrum of \eqref{eq:dislineq} is continuous.

\begin{Definition}\label{adiss}
Consider $u_{n+1} = R^A(n;h) u_n$ where each $R^A(n;h)\in \mathbb{R}^{d \times d}$ is invertible and upper triangular, the sequence $\{R^A(n;h)\}_{n=0}^{\infty}$ is bounded, and $\text{inf}_{n \geq 0}R_{i,i}^A(n;h) > 0$ for $i=1,\hdots,d$.  Let $p \geq 1$ and suppose there exists an $h^* > 0$ so that if $\|h\|_{\infty} \in (0, h^*)$ and $i <j$, then one of the two following conditions hold:
\begin{enumerate}
\item  $R^A_{i,i}(n;h)$ and $R^A_{j,j}(n;h)$ are discretely integrally separated: there exists $b_{i,j} \in \mathbb{R}$ and $a_{i,j} > 0$ so that if $n \geq m$, then 
\begin{equation}\label{eq:adiss1}
\prod_{k=m}^{n} R_{i,i}^{A}(k;h) (R_{j,j}^A(k;h))^{-1} \geq \exp\left(a_{i,j}(t_n-t_m)+b_{i,j}\right).
\end{equation}
\item $R_{i,i}^A(n;h)$ and $R_{j,j}^A(n;h)$ satisfy that there exists $K_{i,j} > 0$ such that for each $\varepsilon > 0$ there exist $M_{i,j}> 0$ so that if $n \geq m$, then
\begin{equation}\label{eq:adiss2}
\left|\ln\left(\prod_{k=m}^{n}R_{i,i}^A(k;h) (R_{j,j}^A(k;h))^{-1}\right)\right| \leq M_{i,j} + (\varepsilon + K_{i,j}\|h\|_{\infty}^{p})(t_n-t_m).
\end{equation}
\end{enumerate}
We refer to such a system as a system with a p-approximate discrete integral separation structure and say that $R^A(n;h)$ has a p-approximate discrete integral separation structure.  If the first condition is satisfied for all $i < j$, then we say that $R^A(n;h)$ is discretely integrally separated.  If \eqref{eq:dislineq} has a corresponding upper triangular system with a p-approximate discrete integral separation structure, then we say that \eqref{eq:dislineq} has a p-approximate discrete integral separation structure.
\end{Definition}
 The following theorem follows from the results proved in \cite{BVV,DVV3,DVV4}.
\begin{theorem}\label{thm:lediscrete}
Suppose $u_{n+1} = R^A(n;h) u_n$ is a system with a p-approximate discrete integral separation structure with Lyapunov spectrum $\Sigma_L^A = \cup_{i=1}^{d}[\eta_i^A,\mu_i^A]$.  Then, there exists $h^* > 0$ so that if $\|h\|_{\infty} \in (0,h^*)$, then for $i=1,\hdots,d$ we have:
$$\eta_i^A = \liminf_{n \rightarrow \infty} s_i^A(n) + E_i(n;h), \quad \mu_i^A = \limsup_{n \rightarrow \infty}s_i^A(n) + F_i(n;h)$$
where $\|E_i(n;h)\|,\|F_i(n;h)\|= \mathcal{O}(\|h\|_{\infty}^{p})$ and $s_i^A(n) = \frac{\sum_{k=0}^{n}\ln(R_{i,i}^A(k;h))}{t_n-t_0}$.  If $R^A(n;h)$ is discretely integrally separated, then $E_i(n;h) \equiv F_i(n;h) \equiv 0$ for $i=1,\hdots,d$.\qed
\end{theorem}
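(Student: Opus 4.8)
The assertion is equivalent, after a routine truncation construction, to the two estimates $\bigl|\eta_i^A - \liminf_{n\to\infty}s_i^A(n)\bigr| = \mathcal{O}(\|h\|_\infty^p)$ and $\bigl|\mu_i^A - \limsup_{n\to\infty}s_i^A(n)\bigr| = \mathcal{O}(\|h\|_\infty^p)$, uniform over $h$ with $\|h\|_\infty$ small: if $\rho = \mathcal{O}(\|h\|_\infty^p)$ bounds the left-hand side of the first estimate, take $E_i(n;h)$ to be $\eta_i^A - s_i^A(n)$ truncated to $[-\rho,\rho]$ and verify that $\liminf_n(s_i^A(n)+E_i(n;h)) = \eta_i^A$, and construct $F_i(n;h)$ likewise. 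When $R^A(n;h)$ is discretely integrally separated the right-hand sides are $0$ by the discrete analogue of Theorem~\ref{thm:suffconstablyap} (which is contained in \cite{BVV,DVV3,DVV4}), so then $E_i\equiv F_i\equiv 0$; the work is the general $p$-approximate case. For that my plan is to perturb the diagonal of $R^A(n;h)$ by an ordered exponential drift of magnitude $\mathcal{O}(\|h\|_\infty^p)$, apply the discretely integrally separated case to the perturbed system, and carry the conclusion back.

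Fix $C > \max_{i<j}K_{i,j}$, set $\delta := C\|h\|_\infty^p$, and let $\widetilde R^A(n;h)$ agree with $R^A(n;h)$ off the diagonal with $\widetilde R_{i,i}^A(n;h) := R_{i,i}^A(n;h)\,e^{(d-i)\delta h_n}$. Then $\widetilde R^A(n;h)$ is bounded, invertible, upper triangular, $\inf_n\widetilde R_{i,i}^A(n;h)>0$, and $\sup_n\|\widetilde R^A(n;h)-R^A(n;h)\| = \mathcal{O}(\|h\|_\infty^{p+1})$. For $i<j$ we have $\ln\widetilde R_{i,i}^A(k;h)-\ln\widetilde R_{j,j}^A(k;h) = \ln R_{i,i}^A(k;h)-\ln R_{j,j}^A(k;h)+(j-i)\delta h_k$; summing over $k = m,\dots,n$ and using $\sum_{k=m}^n h_k = t_{n+1}-t_m\ge t_n-t_m$, pairs satisfying Definition~\ref{adiss}(1) acquire rate $a_{i,j}+(j-i)\delta$, while for pairs satisfying Definition~\ref{adiss}(2) the choice $\varepsilon = \tfrac{1}{2}\bigl(C-\max_{k<\ell}K_{k,\ell}\bigr)\|h\|_\infty^p$ in \eqref{eq:adiss2} yields a positive rate at least $\tfrac{1}{2}\bigl(C-\max_{k<\ell}K_{k,\ell}\bigr)\|h\|_\infty^p$. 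Thus $\widetilde R^A(n;h)$ is discretely integrally separated for $\|h\|_\infty$ below a suitable $h^*$, and the discrete analogue of Theorem~\ref{thm:suffconstablyap} applied to $\widetilde R^A(n;h)$ gives its Lyapunov spectrum $\cup_{i=1}^d[\widetilde\eta_i,\widetilde\mu_i]$ with $\widetilde\eta_i = \liminf_n\widetilde s_i(n)$ and $\widetilde\mu_i = \limsup_n\widetilde s_i(n)$. Since $\sum_{k=0}^n\ln\widetilde R_{i,i}^A(k;h) = \sum_{k=0}^n\ln R_{i,i}^A(k;h)+(d-i)\delta(t_{n+1}-t_0)$ and $(t_{n+1}-t_0)/(t_n-t_0)\to1$ (as $\sup_n h_n<\infty$ and $t_n\to\infty$), we get $\widetilde s_i(n) = s_i^A(n)+(d-i)\delta+o(1)$, hence $\widetilde\eta_i = \liminf_n s_i^A(n)+(d-i)\delta$ and $\widetilde\mu_i = \limsup_n s_i^A(n)+(d-i)\delta$, each within $\mathcal{O}(\|h\|_\infty^p)$ of the corresponding limit of $s_i^A$.

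The hard part will be the comparison $|\eta_i^A-\widetilde\eta_i| = \mathcal{O}(\|h\|_\infty^p)$ and $|\mu_i^A-\widetilde\mu_i| = \mathcal{O}(\|h\|_\infty^p)$, because $R^A(n;h)$ itself is not discretely integrally separated and the Lyapunov spectrum need not be continuous at it---a small coefficient change can shift Lyapunov exponents by an $\mathcal{O}(1)$ amount when integral separation fails. This is where the quantitative spectral perturbation theory for (approximately) integrally separated triangular systems from \cite{BVV,DVV3,DVV4} enters: the Definition~\ref{adiss}(1) pairs are genuinely integrally separated, so their spectral contributions are isolated and robust under the $\mathcal{O}(\|h\|_\infty^{p+1})$ diagonal perturbation; the Definition~\ref{adiss}(2) pairs have diagonal averages differing by at most $K_{i,j}\|h\|_\infty^p$ (let $\varepsilon\downarrow0$ in \eqref{eq:adiss2} and pass to $\limsup_n$), so the clustering of indices they induce cannot broaden any spectral interval by more than $\mathcal{O}(\|h\|_\infty^p)$. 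Consequently each $[\eta_i^A,\mu_i^A]$ lies within $\mathcal{O}(\|h\|_\infty^p)$ of $[\liminf_n s_i^A(n),\limsup_n s_i^A(n)]$, hence of $[\widetilde\eta_i,\widetilde\mu_i]$; absorbing the $\mathcal{O}(\|h\|_\infty^p)$ terms into $E_i(n;h)$ and $F_i(n;h)$ as in the first paragraph completes the proof, with $E_i\equiv F_i\equiv0$ in the discretely integrally separated case since then no perturbation is required.
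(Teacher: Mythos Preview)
The paper does not prove Theorem~\ref{thm:lediscrete}; it is stated with a \qed immediately after the sentence ``The following theorem follows from the results proved in \cite{BVV,DVV3,DVV4}.'' There is thus no in-paper argument to compare your sketch against.

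Your reduction to the two estimates $|\eta_i^A - \liminf_n s_i^A(n)| = \mathcal{O}(\|h\|_\infty^p)$ and $|\mu_i^A - \limsup_n s_i^A(n)| = \mathcal{O}(\|h\|_\infty^p)$ is correct, and the truncation construction of $E_i$, $F_i$ is a valid way to recover the stated form. The diagonal drift $\widetilde R_{i,i}^A = R_{i,i}^A\,e^{(d-i)\delta h_n}$ with $\delta = C\|h\|_\infty^p$ is a clean device: your verification that it forces discrete integral separation of $\widetilde R^A$ is right, and the diagonal limits do shift by exactly $(d-i)\delta = \mathcal{O}(\|h\|_\infty^p)$.

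However, the step you flag as the ``hard part''---showing $|\eta_i^A - \widetilde\eta_i| = \mathcal{O}(\|h\|_\infty^p)$---is essentially the entire content of the theorem, and your paragraph there is a heuristic rather than an argument. You assert that case-(2) clustering ``cannot broaden any spectral interval by more than $\mathcal{O}(\|h\|_\infty^p)$,'' but that is precisely what must be proved: why do the off-diagonal entries of $R^A$ not move the Lyapunov exponents away from the diagonal limits by more than this amount? The block-triangular decoupling that makes this work (integrally separated blocks contribute independent spectral pieces; within each nearly-coalescent block the off-diagonal contribution is controlled by the $\mathcal{O}(\|h\|_\infty^p)$ near-equality of the diagonals) is the actual substance of \cite{BVV,DVV3,DVV4}, and you invoke those references at exactly this point. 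So the detour through $\widetilde R^A$ does not bypass the need for that input; your sketch is a correct road map that makes explicit where the external machinery enters, but it is not more self-contained than the paper's bare citation.
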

 
Consider the perturbed system $z_{n+1} = (\Phi^A(n;h) + F_n ) z_n$ and assume that $\Phi^A(n;h)$ and $\Phi^A(n;h) + F_n$ are bounded and invertible for all $n \geq 0$.  Fix $Q_0 = \overline{Q}_0$ orthogonal and inductively construct unique QR factorizations $\Phi^A(n;h) Q_n = Q_{n+1} R^A(n;h)$ and $(\Phi^A(n;h) + F_n) \overline{Q}_n = \overline{Q}_{n+1} \overline{R}^A(n;h)$ where $Q_n$ and $\overline{Q}_n$ are orthogonal and $R^A(n;h)$ and $\overline{R}^A(n;h)$ are upper triangular with positive diagonal entries.

\begin{theorem}[Theorem 7.7 in \cite{BVV} and Theorem 4.1 in \cite{VV1}]\label{thm:disQR}

Suppose that $\overline{R}^A(n;h)$ has a p-approximate discrete integral separation structure.  If $G:= \text{sup}_{n \geq 0}\|G_n\| = \text{sup}_{n \geq 0}\text{max}\{\|E_n\|,\|F_n\|\}$ where $E_n = \overline{Q}_{n+1}^T E_n \overline{Q}_n$ is sufficiently small, then there exists an $h^* > 0$ so that if $h$ is any sequence of step-sizes with $\|h\|_{\infty} \in (0, h^*)$, then there exists an orthogonal sequence of matrices $\{\tilde{Q}_n\}_{n=0}^{\infty}$ and $K > 0$ such that 
$$\tilde{Q}_{n+1} R^A(n;h) = [\overline{R}^A(n;h)+E_n]\tilde{Q}_n, \quad \|\tilde{Q}_n-I\| \leq KG, \quad n \geq 0.$$\qed
\end{theorem}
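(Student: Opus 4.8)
The plan is to rewrite the unperturbed discrete QR iteration in the orthogonal frame of the perturbed one, which produces the asserted algebraic identity essentially for free, and then to extract the near-identity bound $\|\tilde Q_n - I\|\le KG$ from the discrete roughness theory of \cite{BVV,VV1}.

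First I would set $\tilde Q_n := \overline{Q}_n^{T}Q_n$, which is orthogonal and satisfies $\tilde Q_0 = I$ since $Q_0 = \overline{Q}_0$. Substituting $\Phi^A(n;h) = (\Phi^A(n;h)+F_n) - F_n = \overline{Q}_{n+1}\overline{R}^A(n;h)\overline{Q}_n^{T} - F_n$ into $\Phi^A(n;h)Q_n = Q_{n+1}R^A(n;h)$ and left-multiplying by $\overline{Q}_{n+1}^{T}$ yields
$$\tilde Q_{n+1} R^A(n;h) = \bigl[\,\overline{R}^A(n;h) + E_n\,\bigr]\tilde Q_n, \qquad E_n := -\,\overline{Q}_{n+1}^{T}F_n\,\overline{Q}_n,$$
which is exactly the claimed relation (up to the sign convention for the perturbation). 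The right-hand side, multiplied out, is invertible once $G := \sup_n\|E_n\|$ is small, because $\overline{R}^A(n;h)$ is uniformly invertible and uniformly bounded by the hypotheses of Definition \ref{adiss}; hence its QR factorization with positive diagonal is unique, and the identity above just says that $\{\tilde Q_n\}$ is the orthogonal sequence generated by running the discrete QR iteration of the perturbed triangular system $\overline{R}^A(n;h)+E_n$ from $\tilde Q_0 = I$, with $\{R^A(n;h)\}$ the resulting triangular factors. All that remains is the estimate $\|\tilde Q_n - I\|\le KG$.

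For this I would put $\tilde Q_n = I + \Delta_n$ (so $\Delta_0 = 0$ and $\Delta_n + \Delta_n^{T} = -\Delta_n^{T}\Delta_n$), insert this into the displayed identity, and multiply on the right by $R^A(n;h)^{-1}$; replacing $R^A(n;h)^{-1}$ by $\overline{R}^A(n;h)^{-1}$ at the cost of an $\mathcal{O}(\|\Delta_n\|+G)$ error gives a difference equation of the schematic form
$$\Delta_{n+1} = \overline{R}^A(n;h)\,\Delta_n\,\overline{R}^A(n;h)^{-1} + U_n + E_n\,\overline{R}^A(n;h)^{-1} + \mathcal{O}\!\left(\|\Delta_n\|^2 + G\|\Delta_n\|\right),$$
where $U_n = \bigl(\overline{R}^A(n;h) - R^A(n;h)\bigr)R^A(n;h)^{-1}$ is a product of upper triangular matrices, hence upper triangular. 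Since $\Delta_n$ is skew-symmetric to leading order it is determined by its strictly lower-triangular part $\mathrm{low}(\Delta_n)$, and applying the strictly-lower-triangular projection kills $U_n$, leaving a closed recursion for $\mathrm{low}(\Delta_n)$ whose homogeneous transition operator acts on the $(i,j)$ entry with $i>j$ essentially by multiplication by $\overline{R}^A_{i,i}(n;h)/\overline{R}^A_{j,j}(n;h)$, with a coupling through the strictly upper-triangular part of $\overline{R}^A$ and the skew reconstruction of $\Delta_n$ from $\mathrm{low}(\Delta_n)$. By the $p$-approximate discrete integral separation hypothesis \eqref{eq:adiss1}--\eqref{eq:adiss2} these diagonal ratios have products over $[m,n]$ that either decay like $\exp(-a_{i,j}(t_n - t_m))$ or are bounded by $\exp\bigl((\varepsilon + K_{i,j}\|h\|_\infty^{p})(t_n - t_m)\bigr)$, so after fixing $\varepsilon$ small and then shrinking $h^{*}$ the homogeneous transition operator $T(n,m)$ contracts geometrically, $\|T(n,m)\|\le C\theta^{\,n-m}$ with $\theta<1$. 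A discrete variation-of-constants formula then writes $\mathrm{low}(\Delta_n)$ as a convolution of $T$ against the forcing $\mathcal{O}(G)+\mathcal{O}(\|\Delta\|^2)$, yielding $\sup_n\|\Delta_n\|\le \tfrac{C}{1-\theta}\bigl(c\,G + c'\sup_n\|\Delta_n\|^2\bigr)$; a bootstrap that uses $\Delta_0 = 0$ and continuity in $G$ to know $\sup_n\|\Delta_n\|$ is already small then absorbs the quadratic term and gives $\sup_n\|\Delta_n\|\le KG$. This is precisely the content of Theorem 7.7 of \cite{BVV} and Theorem 4.1 of \cite{VV1}, which I would invoke for the detailed estimates; the mechanism is in the same spirit as Theorem \ref{thm:lediscrete}.

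The step I expect to be the main obstacle is the non-integrally-separated case, condition \eqref{eq:adiss2}: there the homogeneous transition is only non-expanding up to the slack $\varepsilon + K_{i,j}\|h\|_\infty^{p}$, so not only $\theta<1$ but also $\tfrac{C}{1-\theta}c'$ small enough for the bootstrap to close must be forced by a careful two-stage choice — first $\varepsilon$ small relative to the genuine gaps and to the constants $C,c'$, then $h^{*}$ small enough that $K_{i,j}\|h\|_\infty^{p}$ is comparably small — which is exactly where the threshold $h^{*}$ in the statement originates. The remaining bookkeeping (verifying $U_n$ is upper triangular, bounding the quadratic remainder uniformly in $n$, and checking that the bounded solution of the $\mathrm{low}(\Delta_n)$ recursion is unique and hence equals $\mathrm{low}(\overline{Q}_n^{T}Q_n - I)$) is routine given the uniform boundedness and invertibility assumptions.
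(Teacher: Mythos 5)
This statement is a background theorem that the paper cites from \cite{BVV} and \cite{VV1} and does not prove; there is therefore no internal argument to compare against, and what you have produced is a reconstruction of the argument in those references.

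That said, your reconstruction is faithful to the mechanism used there, and the algebraic reduction is exactly right and worth having explicitly. Taking $\tilde Q_n := \overline{Q}_n^{T}Q_n$, substituting $\Phi^A(n;h)=\overline{Q}_{n+1}\overline{R}^A(n;h)\overline{Q}_n^{T}-F_n$ into $\Phi^A(n;h)Q_n = Q_{n+1}R^A(n;h)$, left-multiplying by $\overline{Q}_{n+1}^{T}$, and inserting $\overline{Q}_n\overline{Q}_n^{T}$ gives $\tilde Q_{n+1}R^A(n;h) = \bigl[\overline{R}^A(n;h)-\overline{Q}_{n+1}^{T}F_n\overline{Q}_n\bigr]\tilde Q_n$, so your $E_n := -\overline{Q}_{n+1}^{T}F_n\overline{Q}_n$ is the correct reading and in fact clarifies the self-referential typo in the paper's statement (which literally writes $E_n=\overline{Q}_{n+1}^{T}E_n\overline{Q}_n$). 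Your identification of the contraction mechanism is also right: with Definition~\ref{adiss} stated for $i<j$, the products $\prod_k \overline{R}^A_{i,i}(k;h)(\overline{R}^A_{j,j}(k;h))^{-1}$ grow for $i<j$, so for a strictly lower-triangular index $(i,j)$ with $i>j$ the ratio decays like $\exp(-a_{j,i}(t_n-t_m))$ under condition~\eqref{eq:adiss1}, which is precisely what makes the $\mathrm{low}(\Delta_n)$ recursion forward-contracting and allows the bootstrap to close.

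The one place where your sketch, as written, does not fully justify the stated uniform bound $\|\tilde Q_n - I\|\le KG$ is the case you yourself flag: pairs $(i,j)$ satisfying only condition~\eqref{eq:adiss2}. There the homogeneous multiplier is merely non-expanding up to slack $\varepsilon + K_{i,j}\|h\|_\infty^{p}$, not contracting, so the naive discrete variation-of-constants sum of an $\mathcal{O}(G)$ forcing can accumulate linearly in $n$ and is not a priori $\mathcal{O}(G)$ uniformly; shrinking $\varepsilon$ and $h^{*}$ controls the slack but does not by itself produce a contraction factor $\theta<1$ for those entries, so the geometric bound $\|T(n,m)\|\le C\theta^{\,n-m}$ does not hold across the whole lower triangle. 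The resolution in the cited references is to organize the indices into blocks corresponding to the Sacker--Sell intervals, obtain contraction across distinct blocks from condition~\eqref{eq:adiss1}, and treat within-block coupling separately (where the claim is continuity of the spectral data rather than literal nearness of $\tilde Q_n$ to the identity entry-by-entry within a block); if one reads the theorem's hypothesis strictly as requiring discrete integral separation for every pair, your argument goes through as stated. Since the paper quotes the result rather than proving it, this is best regarded as a caveat about the precise hypothesis rather than an error in your reconstruction.

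Minor points: your bound $\|T(n,m)\|\le C\theta^{\,n-m}$ should be phrased in terms of $t_n - t_m$ rather than $n-m$; they are comparable only because $\inf_n h_n>0$, which is worth saying. Also the theorem's $G$ is $\sup_n\max\{\|E_n\|,\|F_n\|\}$, while you write $\sup_n\|E_n\|$; since $E_n$ and $F_n$ differ by orthogonal conjugation these are equal in the $2$-norm and comparable in any norm, but the identification should be noted.
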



\section{Main Results}\label{sec:mainresults}

\subsection{Statement of the main results for linear ODEs}\label{sec:linearresults}

We henceforth fix a one-step method $\mathcal{M}$ with local truncation error of order $p \geq 1$ and consider a linear system \eqref{eq:lineq} with Sacker-Sell spectrum $\Sigma_{ED}=\cup_{i=1}^{d}[\alpha_i,\beta_i]$ and Lyapunov spectrum $\Sigma_L = \cup_{i=1}^{d}[\eta_i,\mu_i]$.  We make use of the following assumptions to characterize the approximation properties of these two spectra.

\begin{assumption}\label{as:assumption1}
The integer $p \geq 1$ and the coefficient matrix $A(t)$ of \eqref{eq:lineq} is bounded and at least $C^{p+1}$.
\end{assumption}

\begin{assumption}\label{as:assumption2}
The ODE \eqref{eq:lineq} satisfies Assumption \ref{as:assumption1} and in addition there is a corresponding upper triangular ODE
\begin{equation}\label{eq:lineqB}
\dot{y}(t) = B(t)y(t)
\end{equation}
with $x = Q(t)y$ that has an integral separation structure defined by the estimates in Definition \ref{def:iss}.  

\end{assumption}

Let $x_{n+1} = \Phi^A(n;h) x_n$ denote the numerical solution of \eqref{eq:lineq} with initial condition $x(t_0)=x_0$ using $\mathcal{M}$ with some sequence of step-sizes $h = \{h_n\}_{n=0}^{\infty}$ and let $y_{n+1} = \Phi^B(n;h) y_n$ denote the numerical solution of \eqref{eq:lineqB} using $\mathcal{M}$ with the same sequence of step-sizes and initial condition $y(t_0)=y_0 :=Q_0^T x_0$.  We shall always assume that $\|h\|_{\infty}$ is so small that $\Phi^A(n;h)$ and $\Phi^B(n;h)$ are both uniformly bounded and invertible for all $n \geq 0$.  The matrices $\Phi^B(n;h)$ are upper triangular since $B(t)$ is upper triangular and each diagonal entry $\Phi^B_{j,j}(n;h)$ is such that $y_{n+1}^j = \Phi_{j,j}^B(n;h)y_n^j$ is the numerical solution of the scalar equation $\dot{y}_j(t) = B_{j,j}(t)y_j(t)$ with $y_j(t_0) = 1$ using $\mathcal{M}$ and the sequence of step-sizes $h$.  Since $\Phi^A(n;h)$ is invertible for $n \geq 0$ we can inductively construct unique QR factorizations of $\Phi^A(n;h)Q_n$ as $\Phi^A(n;h) Q_n =  Q_{n+1}R^A(n;h)$ where each $Q_{n}$ is orthogonal, $Q_0 = Q(t_0)$, and $R^A(n;h)$ is upper triangular with positive diagonal entries. 

For the remainder of Section \ref{sec:mainresults} we denote the Lyapunov and Sacker-Sell spectra of $x_{n+1} = \Phi^A(n;h)x_n$ by $\Sigma_{ED}^A=\sum_{i=1}^d [\alpha_i^A,\beta_i^A]$ and $\Sigma_L^A=\cup_{i=1}^d [\eta_i^A,\mu_i^A]$ and those of $y_{n+1}=\Phi^B(n;h)y_n$ by $\Sigma_{ED}^B=\sum_{i=1}^d [\alpha_i^B,\beta_i^B]$ and $\Sigma_L^B=\cup_{i=1}^d [\eta_i^B,\mu_i^B]$.  We do not explicitly express the dependence of the spectra of these discrete systems on $h$. The following two theorems are proved in Section \ref{sec:linearresultsproofs}.

\begin{theorem}\label{thm:maintheorem1}
Suppose \eqref{eq:lineq} satisfies Assumption \ref{as:assumption1}.  Given $\varepsilon > 0$, there exists $h^*> 0$ so that if $h$ is any sequence of step-sizes with $\|h\|_{\infty} \in (0, h^*)$, then for $i=1,\hdots,d$:
$$|\alpha_i^A-\alpha_i| < \varepsilon, \quad |\beta_i^A-\beta_i| < \varepsilon, \quad \alpha_i^B = \alpha_i+\mathcal{O}(\|h\|_{\infty}^p), \quad \beta_i^B = \beta_i + \mathcal{O}(\|h\|_{\infty}^p).$$\qed
\end{theorem}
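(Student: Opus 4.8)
\emph{Proof strategy.} The plan is to reduce all four quantities to one-dimensional data and then compare discrete products with continuous integrals. By the QR reductions recalled in Section \ref{sec:mainresults}, $\Sigma_{ED}^{A}$ is the Sacker--Sell spectrum of the upper triangular system $u_{n+1}=R^{A}(n;h)u_n$ and $\Sigma_{ED}^{B}$ that of the already upper triangular system $y_{n+1}=\Phi^{B}(n;h)y_n$, while by \eqref{eq:SSformula} the endpoints $\alpha_i,\beta_i$ of the ODE spectrum are read off the diagonal entries $B_{i,i}$ of the corresponding upper triangular ODE \eqref{eq:lineqB}. Writing $X^{A}(t,s)$, $X^{B}(t,s)$ for the transition matrices of \eqref{eq:lineq} and \eqref{eq:lineqB}, the change of variables $x=Q(t)y$ gives $X^{A}(t,s)=Q(t)X^{B}(t,s)Q(s)^{T}$, where $X^{B}(t,s)$ is upper triangular with $X^{B}_{i,i}(t_{n+1},t_n)=\exp(\int_{t_n}^{t_{n+1}}B_{i,i}(\tau)\,d\tau)$. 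Via Theorem \ref{thm:ssdiss}, everything then reduces to comparing $\ln\Phi^{B}_{i,i}(n;h)$ and $\ln|R^{A}_{i,i}(n;h)|$ with $\int B_{i,i}$.

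\emph{The sharp estimates for $\alpha_i^{B},\beta_i^{B}$.} As noted in the setup, $\Phi^{B}_{i,i}(n;h)$ is exactly the value produced by one step of $\mathcal{M}$ applied to the scalar linear equation $\dot y_i=B_{i,i}(t)y_i$ with unit initial value at $t_n$. Since $A\in C^{p+1}$ and $Q$ solves \eqref{eq:Qeqn}, $B_{i,i}$ is bounded and $C^{p+1}$, so the order-$p$ local truncation error of $\mathcal{M}$, together with the boundedness above and below of $\exp(\int_{t_n}^{t_{n+1}}B_{i,i})$, yields uniformly in $n$
\[
\ln\Phi^{B}_{i,i}(n;h)=\int_{t_n}^{t_{n+1}}B_{i,i}(\tau)\,d\tau+\mathcal{O}(h_n^{p+1}).
\]
Summing over a window of $m$ steps and using $\sum_k h_k^{p+1}\le\|h\|_{\infty}^{p}\sum_k h_k$ gives $\ln|\prod_{k=n}^{n+m}\Phi^{B}_{i,i}(k;h)|=\int_{t_n}^{t_{n+m}}B_{i,i}+\mathcal{O}(\|h\|_{\infty}^{p}(t_{n+m}-t_n))$ up to a boundary term of size $\mathcal{O}(\|h\|_{\infty})$. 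Dividing by $t_{n+m}-t_n$, taking $\inf_{n\ge 0}$ and then $\liminf_{m\to\infty}$ (and the mirror operations for $\beta_i^{B}$), the boundary term is negligible in the limit, the remainder is $\mathcal{O}(\|h\|_{\infty}^{p})$, and Theorem \ref{thm:ssdiss} leaves $\alpha_i^{B}=\bigl(\liminf_{m\to\infty}\inf_{n\ge 0}\tfrac{1}{t_{n+m}-t_n}\int_{t_n}^{t_{n+m}}B_{i,i}\bigr)+\mathcal{O}(\|h\|_{\infty}^{p})$. The parenthesized quantity equals $\alpha_i$ by the sampling lemma below, so $\alpha_i^{B}=\alpha_i+\mathcal{O}(\|h\|_{\infty}^{p})$ and, symmetrically, $\beta_i^{B}=\beta_i+\mathcal{O}(\|h\|_{\infty}^{p})$, with constants depending only on the ODE and $h^{*}$.

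\emph{The estimates for $\alpha_i^{A},\beta_i^{A}$.} Here I would argue by perturbation from the exact sampled flow, since without integral separation there is no index-by-index control on the discrete QR factors $Q_n$. By linearity, the order-$p$ local truncation error of $\mathcal{M}$ on \eqref{eq:lineq} takes the form $X^{A}(t_{n+1},t_n)-\Phi^{A}(n;h)=L_n h_n^{p+1}$ with $\sup_{n\ge 0}\|L_n\|<\infty$ depending only on the ODE; in particular $\sup_{n\ge 0}\|X^{A}(t_{n+1},t_n)-\Phi^{A}(n;h)\|=\mathcal{O}(\|h\|_{\infty}^{p+1})$, and for $\|h\|_{\infty}$ small each of $\Phi^{A}(n;h)$, $X^{A}(t_{n+1},t_n)$ is invertible with uniformly bounded inverse. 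The identity $X^{A}(t_{n+1},t_n)=Q(t_{n+1})X^{B}(t_{n+1},t_n)Q(t_n)^{T}$ is a genuine discrete QR factorization of the sampled exact flow with orthogonal sequence $\{Q(t_n)\}$ (which starts at $Q_0$), so its corresponding upper triangular system is $\tilde y_{n+1}=X^{B}(t_{n+1},t_n)\tilde y_n$; by Theorem \ref{thm:ssdiss} and the sampling lemma, the Sacker--Sell spectrum of $\tilde x_{n+1}=X^{A}(t_{n+1},t_n)\tilde x_n$ is exactly $\Sigma_{ED}=\cup_{i=1}^{d}[\alpha_i,\beta_i]$. Finally, by the continuity of the discrete Sacker--Sell spectrum under $l^{\infty}(\mathbb{N})$ perturbations of the coefficient matrix (Theorem 4.1 of \cite{Potzsche2013}), given $\varepsilon>0$ there is $\delta>0$ such that an $l^{\infty}$-perturbation of size $<\delta$ moves the spectrum by at most $\varepsilon$; choosing $h^{*}$ so small that $\sup_n\|L_n h_n^{p+1}\|<\delta$ and listing the $d$ spectral intervals in a fixed order yields $|\alpha_i^{A}-\alpha_i|<\varepsilon$ and $|\beta_i^{A}-\beta_i|<\varepsilon$.

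\emph{The main obstacle} is the \emph{sampling lemma}: for any bounded continuous $g$ on $(t_0,\infty)$ and any grid with $0<\inf_n h_n\le\sup_n h_n<\infty$,
\[
\liminf_{m\to\infty}\,\inf_{n\ge 0}\,\frac{1}{t_{n+m}-t_n}\int_{t_n}^{t_{n+m}}g(\tau)\,d\tau=\liminf_{0<H\to\infty}\,\inf_{t\ge t_0}\,\frac{1}{H}\int_{t}^{t+H}g(\tau)\,d\tau,
\]
and the analogous $\limsup$/$\sup$ identity; this is the only place the step-size bounds are genuinely used. The inequality ``$\ge$'' holds because, for each $m$, $\inf_{n\ge 0}\tfrac{1}{t_{n+m}-t_n}\int_{t_n}^{t_{n+m}}g$ is bounded below by the infimum of $\tfrac1H\int_t^{t+H}g$ over $t\ge t_0$ and $H\ge m\inf_k h_k$, which increases to the right-hand side of the display as $m\to\infty$; ``$\le$'' follows by approximating a near-extremal pair $(t,H)$ for the right-hand side by a grid pair $(t_n,\,t_{n+m}-t_n)$, the change in the average being $\mathcal{O}(\|h\|_{\infty}/H)\to 0$. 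The remaining points are routine: uniform boundedness over steps of length $\le h^{*}$ of the one-step propagators and of the $\mathcal{M}$-local-error constants attached to $B_{i,i}$, the harmless off-by-one in the product index of Theorem \ref{thm:ssdiss}, and the passage from Hausdorff-closeness of the two Sacker--Sell spectra to endpoint-by-endpoint closeness under the fixed ordering.
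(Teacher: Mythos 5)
Your argument is correct, and it is structurally close to the paper's proof while differing in one substantive choice. The treatment of $\alpha_i^B,\beta_i^B$ via the scalar local truncation error estimate for $\dot y_i=B_{i,i}(t)y_i$, summation over windows, and Theorem \ref{thm:ssdiss} matches the paper's reduction to the one-dimensional case. Where you diverge is in the $\alpha_i^A,\beta_i^A$ estimates: the paper writes $\Phi^A(n;h)=Q(t_{n+1})(\Phi^B(n;h)+\overline F(n;h))Q(t_n)^T$ and applies continuity of the discrete Sacker--Sell spectrum (Theorem 4.1 of \cite{Potzsche2013}) to the perturbation $\overline F$ of the already-analyzed base system $\Phi^B(n;h)$, so the conclusion $|\alpha_i^A-\alpha_i|<\varepsilon$ is obtained by chaining the $\mathcal{O}(\|h\|_\infty^p)$ closeness of $\alpha_i^B$ to $\alpha_i$ with the $\varepsilon$-closeness of $\alpha_i^A$ to $\alpha_i^B$. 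You instead perturb from the exactly sampled transition matrix $X^A(t_{n+1},t_n)$ and argue that the sampled system's Sacker--Sell endpoints coincide exactly with $\alpha_i,\beta_i$, then apply the same continuity theorem once. Both routes are legitimate; yours has the modest advantage of not needing the $B$-part of the theorem as an input to the $A$-part, while the paper's reuses the one-dimensional work and the relation \eqref{eq:phiABrelation} already set up for Theorem \ref{thm:maintheorem2}.

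One thing you do that the paper leaves implicit and that deserves the emphasis you gave it: the sampling lemma matching $\liminf_{m}\inf_{n}\frac{1}{t_{n+m}-t_n}\int_{t_n}^{t_{n+m}}B_{i,i}$ with $\liminf_{H}\inf_{t}\frac{1}{H}\int_{t}^{t+H}B_{i,i}$ (and the $\limsup/\sup$ analogue). The paper passes from the estimates \eqref{eq:1dthm3}--\eqref{eq:1dthm4} directly to $\alpha_i^B=\alpha_i+\mathcal{O}(\|h\|_\infty^p)$ without spelling this out; your proof of the lemma, using $0<\inf_n h_n\le\sup_n h_n<\infty$ in both directions, is exactly what is needed and your sketch of the two inequalities is sound. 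No gaps.
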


\begin{theorem}\label{thm:maintheorem2}
Suppose \eqref{eq:lineq} satisfies Assumption \ref{as:assumption2}.  There exists $h^* > 0$ so that if $h$ is any sequence of step-sizes with $\|h\|_{\infty}  \in (0, h^*)$, then the following three conclusions hold.
 \begin{enumerate}
 \item The systems $y_{n+1} = \Phi^B(n;h)y_n$ and $u_{n+1} = R^A(n;h)u_n$ have p-approximate discrete integral separation structures and $\|R^A(n;h)-\Phi^B(n;h)\| = \mathcal{O}(\|h\|_{\infty}^{p+1})$.

\item For $i=1,\hdots,d$ we have $\alpha_i^A = \alpha_i^B+\mathcal{O}(\|h\|_{\infty}^p)= \alpha_i + \mathcal{O}(\|h\|_{\infty}^p)$ and $\beta_i^A = \beta_i^B + \mathcal{O}(\|h\|_{\infty}^p) = \beta_i + \mathcal{O}(\|h\|_{\infty}^p)$.

\item For $i=1,\hdots,d$ if $s_i^A(n) := \frac{\sum_{k=0}^{n}\ln (R_{i,i}^A(k;h))}{t_n-t_0}$, $s_i^B(n) := \frac{\sum_{k=0}^{n}(\ln\Phi_{i,i}^B(k;h))}{t_n-t_0}$, then  
$$\eta_i^A =\liminf_{n\rightarrow \infty}s_i^A(n) + \mathcal{O}(\|h\|_{\infty}^p), \quad  \mu_i^A=\limsup_{n\rightarrow \infty} s_i^A(n) + \mathcal{O}(\|h\|_{\infty}^p),$$
 $$\eta_i^B = \liminf_{n\rightarrow \infty} s_i^B(n) + \mathcal{O}(\|h\|_{\infty}^p), \quad \mu_i^B = \limsup_{n\rightarrow \infty} s_i^B(n) + \mathcal{O}(\|h\|_{\infty}^p),$$ 
 $$\eta_i^A = \eta_i^B +\mathcal{O}(\|h\|_{\infty}^p) = \eta_i + \mathcal{O}(\|h\|_{\infty}^p), \quad \mu_i^A = \mu_i^B +\mathcal{O}(\|h\|_{\infty}^p) = \mu_i + \mathcal{O}(\|h\|_{\infty}^p). $$

 \end{enumerate}\qed
\end{theorem}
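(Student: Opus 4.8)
The plan is to reduce everything to three ingredients: a single-step comparison between the numerical flow of $\dot x=A(t)x$ and the orthogonal conjugate of the numerical flow of the triangular ODE $\dot y=B(t)y$; the transfer of the integral separation structure of $B(t)$ to the discrete upper triangular factor; and the discrete QR machinery (Theorems \ref{thm:disQR}, \ref{thm:ssdiss}, \ref{thm:lediscrete}) together with the continuous-time spectral formulas (Theorems \ref{thm:bddctsthm}, \ref{thm:suffconstablyap}). First I would set up the two numerical flows. Since $A\in C^{p+1}$ by Assumption \ref{as:assumption1}, and since the $Q$-equation \eqref{eq:Qeqn} propagates this smoothness so that $B=Q^TAQ-Q^T\dot Q$ is bounded and $C^{p+1}$, the order-$p$ local error estimate for $\mathcal M$ applies to both ODEs and, using that for a linear ODE the local error depends linearly on the data and the one-step transition matrices are uniformly bounded, yields the uniform matrix estimates $\Phi^A(n;h)=X(t_{n+1})X(t_n)^{-1}+\mathcal O(h_n^{p+1})$ and $\Phi^B(n;h)=Y(t_{n+1})Y(t_n)^{-1}+\mathcal O(h_n^{p+1})$, where $X,Y$ are fundamental matrices with $X=QY$. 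Hence $\Phi^A(n;h)=Q(t_{n+1})\Phi^B(n;h)Q(t_n)^T+F_n$ with $\|F_n\|=\mathcal O(h_n^{p+1})$. For $\|h\|_\infty$ small, $\Phi^B(n;h)$ is upper triangular with positive diagonal (its $(j,j)$ entry is the one-step numerical solution of $\dot y_j=B_{j,j}(t)y_j$ from $1$, so $\approx\exp(\int_{t_n}^{t_{n+1}}B_{j,j})>0$) and $\ln\Phi^B_{j,j}(n;h)=\int_{t_n}^{t_{n+1}}B_{j,j}(\tau)\,d\tau+\mathcal O(h_n^{p+1})$.

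Next I would transfer the integral separation structure and then run the QR perturbation argument. Summing the scalar estimate gives $\ln\prod_{k=m}^{n}\Phi^B_{i,i}(k;h)(\Phi^B_{j,j}(k;h))^{-1}=\int_{t_m}^{t_{n+1}}(B_{i,i}-B_{j,j})\,d\tau+\sum_{k=m}^{n}\mathcal O(h_k^{p+1})$, and $\sum_{k=m}^{n}h_k^{p+1}\le\|h\|_\infty^{p}(t_{n+1}-t_m)$; feeding the estimates of Definition \ref{def:iss} from Assumption \ref{as:assumption2} into this shows that for $\|h\|_\infty$ small $\Phi^B(n;h)$ has a $p$-approximate discrete integral separation structure in the sense of Definition \ref{adiss} (the error term provides the $K_{i,j}\|h\|_\infty^{p}$ in \eqref{eq:adiss2}, and in \eqref{eq:adiss1} it shrinks $a_{i,j}$ by at most a factor of two). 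Then set $\widehat\Phi^A(n;h):=Q(t_{n+1})\Phi^B(n;h)Q(t_n)^T=\Phi^A(n;h)-F_n$: starting a discrete QR iteration of $\widehat\Phi^A$ at $Q_0=Q(t_0)$ reproduces inductively the orthogonal factors $Q(t_n)$ and triangular factors exactly $\Phi^B(n;h)$. Applying Theorem \ref{thm:disQR} with the unperturbed system $x_{n+1}=\Phi^A(n;h)x_n$ (triangular factors $R^A(n;h)$), the perturbed system $\widehat\Phi^A(n;h)=\Phi^A(n;h)+(-F_n)$ (triangular factors $\Phi^B(n;h)$, which has a $p$-approximate discrete integral separation structure), and $G=\sup_n\|F_n\|=\mathcal O(\|h\|_\infty^{p+1})$ small, gives orthogonal $\tilde Q_n$ with $\|\tilde Q_n-I\|=\mathcal O(\|h\|_\infty^{p+1})$ and $\tilde Q_{n+1}R^A(n;h)=[\Phi^B(n;h)+E_n]\tilde Q_n$ with $\|E_n\|=\|F_n\|=\mathcal O(h_n^{p+1})$. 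Since $\Phi^B(n;h)=I+\mathcal O(h_n)$ is bounded, this yields $R^A(n;h)=\Phi^B(n;h)+\mathcal O(\|h\|_\infty^{p+1})$, the last claim of conclusion 1; comparing diagonals ($\ln R^A_{i,i}=\ln\Phi^B_{i,i}+\mathcal O(\|h\|_\infty^{p+1})$, using $\Phi^B_{i,i}$ bounded below) and using $\inf_n h_n>0$ to absorb one power of $\|h\|_\infty$ when dividing sums of such errors by window lengths shows $R^A(n;h)$ also has a $p$-approximate discrete integral separation structure, completing conclusion 1.

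The spectral conclusions then follow from the black-box theorems. For conclusion 2, Theorem \ref{thm:ssdiss} expresses $\alpha_i^A,\beta_i^A$ (resp.\ $\alpha_i^B,\beta_i^B$) as liminf/limsup over windows of $\frac{1}{t_{n+m}-t_n}\ln|\prod R^A_{i,i}|$ (resp.\ $\Phi^B_{i,i}$); since $\ln R^A_{i,i}(k;h)-\ln\Phi^B_{i,i}(k;h)=\mathcal O(\|h\|_\infty^{p+1})$ and $t_{n+m}-t_n\ge m\inf_k h_k$, the windowed averages differ by $\mathcal O(\|h\|_\infty^{p})$, so $\alpha_i^A=\alpha_i^B+\mathcal O(\|h\|_\infty^{p})$ and $\beta_i^A=\beta_i^B+\mathcal O(\|h\|_\infty^{p})$; and $\ln\Phi^B_{i,i}(k;h)-\int_{t_k}^{t_{k+1}}B_{i,i}=\mathcal O(h_k^{p+1})$ with $\sum h_k^{p+1}\le\|h\|_\infty^{p}(t_{n+m}-t_n)$ gives, after identifying liminf/limsup over grid windows with those over all windows (any window is within $\|h\|_\infty$ of a grid window and $B_{i,i}$ is bounded), $\alpha_i^B=\alpha_i+\mathcal O(\|h\|_\infty^{p})$ and $\beta_i^B=\beta_i+\mathcal O(\|h\|_\infty^{p})$ via Theorem \ref{thm:bddctsthm}. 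For conclusion 3, Theorem \ref{thm:lediscrete} (now applicable since $R^A$ and $\Phi^B$ have $p$-approximate discrete integral separation structures) gives $\eta_i^A=\liminf_n s_i^A(n)+\mathcal O(\|h\|_\infty^{p})$, $\mu_i^A=\limsup_n s_i^A(n)+\mathcal O(\|h\|_\infty^{p})$ and likewise for $s_i^B$; then $s_i^A(n)-s_i^B(n)=\frac{1}{t_n-t_0}\sum_{k=0}^{n}(\ln R^A_{i,i}-\ln\Phi^B_{i,i})=\mathcal O(\|h\|_\infty^{p})$ as $n\to\infty$ (again via $\inf_n h_n>0$), and $s_i^B(n)=\frac{1}{t_n-t_0}\int_{t_0}^{t_{n+1}}B_{i,i}+\mathcal O(\|h\|_\infty^{p})$ whose liminf/limsup equal $\eta_i,\mu_i$ by Theorem \ref{thm:suffconstablyap} (valid because $B$ has an integral separation structure), which yields $\eta_i^A=\eta_i^B+\mathcal O(\|h\|_\infty^{p})=\eta_i+\mathcal O(\|h\|_\infty^{p})$ and the same for $\mu_i$.

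The main obstacle is the middle step: correctly setting up and invoking Theorem \ref{thm:disQR}. One has to check that the discrete QR iteration of the exactly conjugated sequence $Q(t_{n+1})\Phi^B(n;h)Q(t_n)^T$, started at the continuous QR factor $Q(t_0)$, reproduces precisely the orthogonal factors $Q(t_n)$ and triangular factors $\Phi^B(n;h)$ — this needs $\Phi^B(n;h)$ upper triangular with positive diagonal and the compatibility $X=QY$ of the continuous and discrete constructions — and that $\Phi^A(n;h)$ differs from this sequence by a uniformly $\mathcal O(h_n^{p+1})$ perturbation, which rests on $\mathcal M$ reproducing the transition matrices of both $\dot x=A(t)x$ and $\dot y=B(t)y$ to order $p$ after verifying that $B$ inherits sufficient smoothness from $A$. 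The remaining bookkeeping — summing local $\mathcal O(h_k^{p+1})$ errors via $\sum_k h_k^{p+1}\le\|h\|_\infty^{p}\sum_k h_k$, trading the uniform $\mathcal O(\|h\|_\infty^{p+1})$ gap between $R^A$ and $\Phi^B$ for $\mathcal O(\|h\|_\infty^{p})$ in averaged quantities using $\inf_n h_n>0$, and matching grid-window with continuous-window liminf/limsup — is routine.
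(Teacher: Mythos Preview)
Your proposal is correct and follows essentially the same route as the paper: the paper sets up the relation $\Phi^A(n;h)=Q(t_{n+1})(\Phi^B(n;h)+\overline F(n;h))Q(t_n)^T$, proves three lemmas showing that $\Phi^B$ inherits a $p$-approximate discrete integral separation structure from $B$ (your second paragraph), that Theorem~\ref{thm:disQR} then gives $\|R^A-\Phi^B\|=\mathcal O(\|h\|_\infty^{p+1})$ (your application of Theorem~\ref{thm:disQR}), and that consequently $R^A$ also has such a structure, and then concludes via Theorems~\ref{thm:ssdiss} and~\ref{thm:lediscrete} exactly as you do. Your identification of the correct roles of ``unperturbed'' ($\Phi^A$, triangular factor $R^A$) and ``perturbed'' ($Q(t_{n+1})\Phi^B Q(t_n)^T$, triangular factor $\Phi^B$) in the invocation of Theorem~\ref{thm:disQR} matches the paper's Lemma~\ref{diagest}, and your explicit remark that $B$ inherits $C^{p+1}$ regularity from $A$ via the $Q$-equation is a point the paper uses without comment.
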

The following corollary is immediate from the conclusions of Theorems \ref{thm:maintheorem1} and \ref{thm:maintheorem2}.
\begin{corollary}\label{cor:maincor}
If \eqref{eq:lineq} satisfies Assumption \ref{as:assumption1} and $\underset{1\leq i\leq d}{\text{max}}\beta_i < 0$, then there exists $h^* > 0$ so that if $h$ is any sequence of step-sizes with $\|h\|_{\infty} \in (0, h^*)$, then $\underset{1\leq i\leq d}{\text{max}}\beta_i^A < 0$ and zero is a uniformly exponentially stable equilibrium of $x_{n+1} = \Phi^A(n;h)x_n$.  If \eqref{eq:lineq} sastisfies Assumption \ref{as:assumption2} and $\underset{1\leq i\leq d}{\text{max}}\mu_i < 0$, then there exists $h^* > 0$ so that if $h=\{h_n\}_{n=0}^{\infty}$ is any sequence of step-sizes with $\|h\|_{\infty} \in (0,h^*)$, then $\underset{1\leq i\leq d}{\text{max}}\mu_i^A < 0$ and zero is an exponentially stable equilibrium of $x_{n+1} =\Phi^A(n;h)x_n$.  \qed
\end{corollary}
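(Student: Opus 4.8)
The plan is to read off Corollary~\ref{cor:maincor} directly from the spectral-approximation estimates in Theorems~\ref{thm:maintheorem1} and~\ref{thm:maintheorem2}, combined with the standard sufficient conditions for stability of a nonautonomous linear difference equation: if the Sacker--Sell spectrum $\Sigma_{ED}^A$ is contained in $(-\infty,0)$ then the zero solution of $x_{n+1}=\Phi^A(n;h)x_n$ is uniformly exponentially stable, and if the Lyapunov spectrum $\Sigma_L^A$ (equivalently, the top Lyapunov exponent $\max_{1\le i\le d}\mu_i^A$) is contained in $(-\infty,0)$ then the zero solution is exponentially stable. The first of these follows from the exponential-dichotomy definition underlying $\Sigma_{ED}^A$ used in Section~\ref{sec:preliminaries} (since $0$ then lies to the right of the whole spectrum, $0$ is in the resolvent set and the associated dichotomy projector is the identity, i.e.\ uniform exponential decay); the second is the discrete-time analogue of the continuous statement recalled immediately after~\eqref{eq:linvareq}.

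For the first assertion I would fix $\varepsilon := -\tfrac12\max_{1\le i\le d}\beta_i>0$. By Theorem~\ref{thm:maintheorem1} there is an $h^*>0$, independent of the step-size sequence, such that $|\beta_i^A-\beta_i|<\varepsilon$ for all $i$ whenever $\|h\|_\infty\in(0,h^*)$; hence $\beta_i^A<\beta_i+\varepsilon\le \max_j\beta_j+\varepsilon=\tfrac12\max_j\beta_j<0$, so $\max_{1\le i\le d}\beta_i^A<0$ and $\Sigma_{ED}^A=\bigcup_{i=1}^d[\alpha_i^A,\beta_i^A]\subset(-\infty,0)$, which gives uniform exponential stability of $x_{n+1}=\Phi^A(n;h)x_n$.

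For the second assertion, Theorem~\ref{thm:maintheorem2} provides a constant $C>0$ and a threshold $h_0^*>0$ with $|\mu_i^A-\mu_i|\le C\|h\|_\infty^{p}$ for all $i$ when $\|h\|_\infty\in(0,h_0^*)$. Choosing $h^*:=\min\{h_0^*,\,(-\max_j\mu_j/(2C))^{1/p}\}$, for $\|h\|_\infty\in(0,h^*)$ one gets $\mu_i^A\le \mu_i+C\|h\|_\infty^{p}\le \max_j\mu_j+C(h^*)^p\le \max_j\mu_j-\tfrac12\max_j\mu_j=\tfrac12\max_j\mu_j<0$, so $\max_{1\le i\le d}\mu_i^A<0$, i.e.\ $\Sigma_L^A\subset(-\infty,0)$, and $x_{n+1}=\Phi^A(n;h)x_n$ has $0$ as an exponentially stable equilibrium.

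This is genuinely immediate, so there is no real obstacle; the only points deserving a word of care are that the estimates in Theorems~\ref{thm:maintheorem1} and~\ref{thm:maintheorem2} hold with thresholds and implied constants that are uniform over all admissible step-size sequences (so a single $h^*$ works simultaneously for all of them), and that one must invoke the discrete-time rather than the continuous-time form of the ``negative spectrum $\Rightarrow$ stability'' implications, which are immediate from the definitions of $\Sigma_{ED}^A$ and $\Sigma_L^A$ employed throughout Section~\ref{sec:preliminaries}.
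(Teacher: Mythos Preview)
Your proposal is correct and matches the paper's own treatment: the paper states that the corollary is immediate from the conclusions of Theorems~\ref{thm:maintheorem1} and~\ref{thm:maintheorem2}, and your argument does precisely that, making explicit the choice of $\varepsilon$ (resp.\ $h^*$) needed to push $\max_i\beta_i^A$ (resp.\ $\max_i\mu_i^A$) below zero and then invoking the discrete-time ``negative spectrum $\Rightarrow$ (uniform) exponential stability'' implications.
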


\begin{example}\label{exmp:2dlin}
Consider the ODE \eqref{eq:2dlin}.  If we let $x = L(t)v$, then $\dot{v}= D(t)v \equiv [D_1+D_2(t)]v$ where the matix $D_1 = \left[\begin{array}{cc} \lambda_1 & \beta_0+a_1 \\ -a_1 & \lambda_2\end{array}\right]$ has distinct eigenvalues with real part $\lambda_1+\lambda_2 < 0$ and $D_2(t) = \left[\begin{array}{cc} 0 & \frac{\beta_0\cos(a_1 t)}{1+\beta_1 t^2} \\ 0 & 0\end{array}\right]$ with $K > 0$ so that $\|D_2(t)\|\leq K/(1+\beta_1 t^2)\rightarrow 0$.  Since $D$ has distinct eigenvalues there exists a time-independent change of variables $v = P z$ so that $P^{-1} D_1 P$ is diagonal.  Since $D_2(t)$ is integrable and $\dot{u} = D_1 u$ has an integral separation structure in a complex sense, it follows that $\dot{v} = (D_1+D_2(t))v$ has an integral separation structure in a complex sense.  Then the fact that $x = L(t)P z$ is a complex Lyapunov transformation implies that \eqref{eq:2dlin} has an integral separation structure.   

Once again consider the solution of \eqref{eq:2dlin} by the first order implicit Euler method with constant step-size $h_0 > 0$.  Since $A(t)$ is bounded and continuous and \eqref{eq:2dlin} and has an integral separation structure, Theorem \ref{thm:maintheorem2} implies that there exists $h^* > 0$ so that if $h_0 \in (0,h^*)$, then the endpoints of the Lyapunov spectrum of the discrete system \eqref{eq:impeuler} agree with those of the continuous system \eqref{eq:2dlin} to $\mathcal{O}(h_0)$ accuracy.  Corollary \ref{cor:maincor} implies that there exists $h^{**} \in (0,h^*)$ so that if $h_0 \in (0,h^{**})$, then the Lyapunov and Sacker-Sell spectrum of \eqref{eq:impeuler} are less than zero and the numerical solution is uniformly exponentially decaying for all sufficiently small $h_0 > 0$. \qed
\end{example}

We now discuss how to use the results of Theorem \ref{thm:maintheorem2} to characterize the approximate average exponential growth/decay rates of \eqref{eq:lineq} on the interval $[t,t+\Delta t]$. 
\begin{lemma}\label{lem:localgrowthcts}
Assume that the ODE \eqref{eq:lineq} satisfies Assumption \ref{as:assumption2}.  Let $X(t)$ be a fundamental matrix solution of \eqref{eq:lineq} and let $X(t) = Q(t)R(t)$ be a QR factorization where $Q(t) \in \mathbb{R}^{d \times d}$ is orthogonal and $R(t) \in \mathbb{R}^{d\times d}$ is upper triangular with positive diagonal entries.  The average exponential growth/decay rates of $X(t)$ on the interval $[t,t+\Delta t]$ where $t \geq t_0$ and $\Delta t > 0$ are given by the following Steklov averages:
\begin{equation}\label{eq:steklovavg}
s_i(t,\Delta t) = \dfrac{1}{\Delta t} \int_{t}^{t+\Delta t} B_{i,i}(\tau) d\tau, \quad i=1,\hdots,d.
\end{equation}
\end{lemma}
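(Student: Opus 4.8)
The plan is to recognize $X(t)=Q(t)R(t)$ as an instance of the continuous QR method of Section~\ref{sec:preliminaries} and then read off the dynamics of the diagonal of $R(t)$. First, differentiating $X(t)=Q(t)R(t)$ and using $\dot X=A(t)X$ gives $\dot R(t)=B(t)R(t)$ with $B(t):=Q^T(t)A(t)Q(t)-Q^T(t)\dot Q(t)$. Since the positive-diagonal QR factor of the invertible $C^{p+1}$ path $X(t)$ is unique, $Q(t)$ is necessarily the solution of the $Q$-equation~\eqref{eq:Qeqn} through $Q(t_0)$; hence $B(t)$ is upper triangular, bounded and continuous (Assumption~\ref{as:assumption1}), and $\dot y=B(t)y$ is a corresponding upper triangular ODE to \eqref{eq:lineq}. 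By the remark following Theorem~\ref{thm:suffconstablyap}, it inherits the integral separation structure assumed in Assumption~\ref{as:assumption2}.

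Second, because $B$ and $R$ are upper triangular, $(BR)_{i,i}=B_{i,i}R_{i,i}$, so each diagonal entry solves the scalar ODE $\dot R_{i,i}=B_{i,i}R_{i,i}$, whence
\[
R_{i,i}(t+\Delta t)=R_{i,i}(t)\,\exp\!\left(\int_{t}^{t+\Delta t}B_{i,i}(\tau)\,d\tau\right)=R_{i,i}(t)\,e^{\,\Delta t\, s_i(t,\Delta t)},\qquad i=1,\dots,d .
\]
Third, I would identify $R_{i,i}$ with the growth/decay factor of the $i$-th direction: since $R(t)$ is upper triangular and invertible with positive diagonal, $\operatorname{span}\{X(t)e_1,\dots,X(t)e_i\}=\operatorname{span}\{q_1(t),\dots,q_i(t)\}$ for all $t$, the component of $X(t)e_i$ orthogonal to $\operatorname{span}\{q_1(t),\dots,q_{i-1}(t)\}$ equals $R_{i,i}(t)q_i(t)$ (of length $R_{i,i}(t)$), and $\prod_{k=1}^i R_{k,k}(t)$ is the $i$-dimensional volume carried by $X(t)$ on $\operatorname{span}\{e_1,\dots,e_i\}$. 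Hence the average exponential growth/decay rate of the $i$-th mode on $[t,t+\Delta t]$ is $\tfrac{1}{\Delta t}\ln\!\bigl(R_{i,i}(t+\Delta t)/R_{i,i}(t)\bigr)=s_i(t,\Delta t)$, which is \eqref{eq:steklovavg}.

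Finally, Assumption~\ref{as:assumption2} is what makes the ordered list $\bigl(s_1(t,\Delta t),\dots,s_d(t,\Delta t)\bigr)$ intrinsic: integral separation guarantees, through Theorem~\ref{thm:suffconstablyap} and the robustness of the Lyapunov spectrum it provides, that the diagonal directions $q_i(t)$ are asymptotically adapted to the Lyapunov filtration, so that $\lim_{\Delta t\to\infty}s_i(t,\Delta t)$ (when it exists) is the $i$-th Lyapunov exponent independently of the choice of fundamental matrix $X$ and of the positive-diagonal QR factorization, and the ordering of the $s_i$ is preserved as $\Delta t$ grows. I expect no serious analytic obstacle here; the main difficulty is definitional — pinning down precisely what ``the average exponential growth/decay rates of $X(t)$ on $[t,t+\Delta t]$'' should mean, justifying the transverse/volume interpretation of $R_{i,i}$ above, and explaining why Assumption~\ref{as:assumption2}, rather than merely Assumption~\ref{as:assumption1}, is the appropriate hypothesis so that these finite-time rates are well-ordered and intrinsic to \eqref{eq:lineq}.
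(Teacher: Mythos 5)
Your computation of the diagonal is correct: differentiating $X=QR$ gives $\dot R=B(t)R$, and because $B$ and $R$ are upper triangular, $\dot R_{i,i}=B_{i,i}R_{i,i}$, so $R_{i,i}(t+\Delta t)=R_{i,i}(t)\,e^{\Delta t\,s_i(t,\Delta t)}$. But notice that this step uses only Assumption~\ref{as:assumption1}; Assumption~\ref{as:assumption2} does not appear in your derivation until the closing remarks, where you correctly flag — but do not resolve — the question of why these diagonal rates are ``the'' rates of $X$. That is precisely the gap the paper's proof addresses. The paper works with the transition matrix $R(t+\Delta t,t)$ (the solution of $\dot\Phi=B\Phi$, $\Phi(t)=I_d$) rather than just its diagonal, and invokes Theorem~5.2 of \cite{DVV3} — which requires the integral separation structure of Assumption~\ref{as:assumption2} — to obtain the factorization
\[
R(t+\Delta t,t)=\operatorname{diag}\!\bigl(e^{\Delta t\,s_1(t,\Delta t)},\dots,e^{\Delta t\,s_d(t,\Delta t)}\bigr)\bigl(I_d+\overline N(t+\Delta t,t)\bigr),
\qquad \|\overline N\|\le K\,|p(\Delta t)|,\ p(0)=0.
\]
The content of the lemma is this control on the \emph{off-diagonal} of the transition matrix: integral separation ensures the nilpotent part is subordinate to the diagonal exponentials, so that the Steklov averages $s_i$ characterize the growth of $R(t+\Delta t,t)$ as a matrix (and hence of $X$, since $Q$ is orthogonal). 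Without Assumption~\ref{as:assumption2} the off-diagonal entries of $R(t+\Delta t,t)$ can outgrow the diagonal, and your ``transverse-component'' reading of $R_{i,i}$, while geometrically accurate, would then describe only the decoupled scalar equations rather than the growth of the fundamental solution. To close your proof you need the estimate on $\overline N$ (or an equivalent argument) showing that Assumption~\ref{as:assumption2} renders $R(t+\Delta t,t)$ approximately diagonal on finite intervals.
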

\begin{proof}
Let $t \geq t_0$ and $\Delta t > 0$.  Since $X(t)=Q(t)R(t)$ and $Q(t)$ is orthogonal the exponential growth/decay of $X(t)$ on $[t,t+\Delta t]$ is given by the exponential growth/decay of $R(t)$ on $[t,t+\Delta t]$.  We express $R(t+\Delta t) = R(t+\Delta t,t)R(t)$ where $R(\tau,t)$ is the unique and upper triangular solution of the matrix ODE IVP
$$\dot{\Phi} = B(\tau)\Phi, \quad \tau > t, \quad \Phi(t,t) = I_d$$
where $I_d$ is the $d \times d$ identity matrix.  We can express $R(t+\Delta t,t)$ as
$$R(t+\Delta t,t) = \text{diag}\left( e^{\Delta t s_1(t,\Delta t)},\hdots,e^{\Delta t s_d(t,\Delta t)} \right) + N(\tau,t).$$
Since \eqref{eq:lineq} satisfies Assumption \ref{as:assumption2}, Theorem 5.2 of \cite{DVV3} implies that
$$R(\tau,t) = \text{diag}\left( e^{\Delta t s_1(t,\Delta t)},\hdots,e^{\Delta t s_d(t,\Delta t)} \right) \left(I_d + \overline{N}(t+\Delta t,t)\right)$$
where $\|\overline{\mathcal{N}}(t+\Delta t,t)\| \leq K \|p(\Delta t)\|$ for some polynomial $p(z)$ with $p(0)= 0$.  Hence the average exponential growth/decay rates of $X(t)$ on $[t,t+\Delta t]$ are given by the quantities $s_i(t,\Delta t)$ for $i=1,\hdots,d$. \qed
\end{proof}
We can prove a result analagous to Lemma \ref{lem:localgrowthcts} for discrete systems \eqref{eq:dislineq} with a p-approximate integral separation structure where exponentials of Steklov averages are replaced by the products of diagonal entries of the upper triangular factor $R^A(n;h)$ in a discrete QR iteration.  Theorem \ref{thm:maintheorem2} then implies that for sufficiently small step-sizes we have $R^A_{i,i}(n;h) = e^{h_n s_i(t_n,h_n)} + \mathcal{O}(\|h\|^{p+1})$ and hence the average exponential growth/decay rate of fundamental matrix solutions of \eqref{eq:dislineq} are approximately (up to a term of the form $\mathcal{O}(\|h\|^{p+1}_{\infty})$) given by the Steklov averages \eqref{eq:steklovavg}.  It follows that for sufficiently small step-sizes the approximate average exponential growth/decay of a numerical solution of \eqref{eq:lineq} from time $t_n$ to $t_{n+k}$ for $k \geq 1$ is given by the average exponential growth/decay rate on the interval $[t_n,t_{n+k}]$ of the $d$ real-valued scalar test problems
$$\dot{y}_i = B_{i,i}(t)y_i, \quad i=1,\hdots,d.$$
This local-in-time stability argument is especially important for applying the linear stability theory to nonlinear ODE IVPs where $A(t) = Df(x(t;x_0,t_0),t)$ cannot be formed exactly. Regardless of the global error of $x_n$ from $x(t_n;x_0,t_0)$ we can still approximately quantify the average exponential growth/decay rates of the numerical solution on the next time interval $[t_n,t_{n+1}]$ assuming that $h_n$ is sufficiently small.

\subsection{Stability of the test problem}\label{sec:1dresults}

In this section we consider the numerical stability of a linear scalar test equation
\begin{equation}\label{eq:1dnalin}
\dot{z} = \lambda(t)z, \quad  t \geq t_0
\end{equation}
where $\lambda:(t_0,\infty)\rightarrow \mathbb{C}$ is $C^{p+1}$ and bounded with $\text{sup}_{t \geq t_0}|\lambda(t)| \leq M$ for some $M > 0$.  For full generality we consider the complex-valued case rather than the real-valued case justified in Section \ref{sec:linearresults}.  The numerical solution of \eqref{eq:1dnalin} by $\mathcal{M}$ using a sequence of step-sizes $h=\{h_n\}_{n=0}^{\infty}$ takes the form $z_{n+1} = \Phi^{\lambda}(n;h)z_n$ where $\Phi^{\lambda}(n;h) \in \mathbb{C}$.

\begin{theorem}\label{thm:1dcounterexample}
Suppose that $\mathcal{M}$ is a Runge-Kutta method with local truncation error of order $p \geq 1$ and $\lambda(t) \in \mathbb{R}$ for all $t \geq t_0$. Then, given any $h_0 > 0$ and any $-\alpha < 0$ we can find $\lambda(t)$ so that \eqref{eq:1dnalin} has Sacker-Sell spectrum with right endpoint given by $-\alpha$ and the numerical solution of $\dot{x}(t) = \lambda(t)x(t)$ using $\mathcal{M}$ with fixed step-size $h_0 > 0$ and initial condition $x(t_0) = x_0 \neq 0$ grows at an exponential rate.
\end{theorem}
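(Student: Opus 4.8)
The plan is to exploit a resonance between the fixed step-size $h_0$ and an artificial period imposed on $\lambda$. I take $\lambda$ to be $h_0$-periodic, so that on the grid $t_n=t_0+nh_0$ the Runge-Kutta increment map $\Phi^\lambda(n;h_0)$ is independent of $n$, and I arrange $\lambda$ so that this common map exceeds $1$ in modulus while the period-average of $\lambda$ equals $-\alpha$ (which forces the continuous-time Sacker-Sell endpoint to be $-\alpha$).

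The key computation is elementary. Write $\mathcal{M}$ with Butcher tableau $(A,b,c)$ and $s$ stages. Applied to $\dot z=\lambda(t)z$ over one step from $t_n$, the method uses $\lambda$ only at the stage times $t_n+c_ih_0$; if $\lambda(t_n+c_ih_0)=\mu$ for every $i$, the stage equations read $(I-h_0\mu A)Z=z_n\mathbf{1}$, and one obtains exactly $z_{n+1}=\bigl(1+h_0\mu\, b^{T}(I-h_0\mu A)^{-1}\mathbf{1}\bigr)z_n=\mathcal{R}(h_0\mu)z_n$, where $\mathcal{R}$ is the stability function of $\mathcal{M}$. Since $\mathcal{M}$ has order $p\ge1$ (so $b^{T}\mathbf{1}=1$), $\mathcal{R}(z)=1+z+\mathcal{O}(z^2)$, hence there is $\delta>0$ with $I-zA$ invertible and $\mathcal{R}(z)>1$ for all $z\in(0,\delta)$. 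Fix any $\mu>0$ with $h_0\mu\in(0,\delta)$ and set $R:=\mathcal{R}(h_0\mu)>1$.

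Next I construct $\lambda$. The set $S:=\{\,c_ih_0 \bmod h_0:\ 1\le i\le s\,\}\subset[0,h_0)$ is finite, so it misses some point $t^{\ast}\in(0,h_0)$; pick a nonnegative $\phi\in C^{\infty}_c\bigl((0,h_0)\setminus S\bigr)$ whose support is a small interval about $t^{\ast}$ with $\int_0^{h_0}\phi=1$, extend $\phi$ to $\mathbb{R}$ with period $h_0$, and put
$$\lambda(t):=\mu+(-\alpha-\mu)\,h_0\,\phi(t-t_0),\qquad t>t_0.$$
Then $\lambda\in C^{\infty}$ is bounded and $h_0$-periodic, $\lambda(t_0+c_ih_0)=\mu$ for all $i$ (stage times avoid $\operatorname{supp}\phi$ modulo $h_0$), and the mean of $\lambda$ over one period is $\mu+(-\alpha-\mu)=-\alpha$.

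It remains to read off the two conclusions. Because the scalar equation \eqref{eq:1dnalin} is its own corresponding upper triangular system, Theorem~\ref{thm:bddctsthm} gives $\beta_1=\limsup_{H\to\infty}\sup_{t\ge t_0}\frac1H\int_t^{t+H}\lambda(\tau)\,d\tau$ and the analogous formula for $\alpha_1$; for an $h_0$-periodic $\lambda$ one has $\frac1H\int_t^{t+H}\lambda\to-\alpha$ as $H\to\infty$, uniformly in $t$, so $\alpha_1=\beta_1=-\alpha$ and the Sacker-Sell spectrum of \eqref{eq:1dnalin} is $\{-\alpha\}$ with right endpoint $-\alpha<0$. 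On the other hand, with the fixed grid $t_n=t_0+nh_0$, periodicity gives $\lambda(t_n+c_ih_0)=\lambda(t_0+c_ih_0)=\mu$ for every $n$ and $i$, hence $\Phi^\lambda(n;h_0)=R$ for all $n$ (each such map is well defined and nonzero since $I-h_0\mu A$ is invertible and $R\ne0$), so $z_n=R^nz_0$ and $|z_n|=R^n|z_0|\to\infty$ at exponential rate $\ln R>0$ whenever $z_0\ne0$. I expect the only real work to be the routine verification surrounding the bump function $\phi$ — that a stage time never lands in its support modulo $h_0$, and that the period-average is exactly $-\alpha$ — both immediate from finiteness of $S$; the substance is the period/step-size resonance, which simultaneously explains why the step-size restriction in Corollary~\ref{cor:maincor} cannot be removed.
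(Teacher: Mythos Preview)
Your proof is correct and shares the paper's core idea: take $\lambda$ to be $h_0$-periodic with mean $-\alpha$, so the continuous Sacker--Sell spectrum is $\{-\alpha\}$, while on the grid $t_n=t_0+nh_0$ the one-step map is independent of $n$ and reduces to the stability function at a small positive argument, hence exceeds $1$. The paper implements this with $\lambda(t)=D\cos\bigl(2\pi(t-t_0)/h_0\bigr)-\alpha$ and asserts that the step map equals $\xi\bigl(h_0(D-\alpha)\bigr)$; strictly speaking that identity requires $\lambda$ to take the value $D-\alpha$ at every stage time $t_n+c_ih_0$, i.e.\ $\cos(2\pi c_i)=1$ for each abscissa $c_i$, which holds only when all $c_i$ are integers. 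Your bump-function construction sidesteps this: by supporting the correction $\phi$ in $(0,h_0)$ away from the finite set $\{c_ih_0 \bmod h_0\}$ you guarantee $\lambda\equiv\mu$ at every stage time for an arbitrary Runge--Kutta tableau, so $\Phi^{\lambda}(n;h_0)=\mathcal{R}(h_0\mu)>1$ holds exactly. The trade-off is a less explicit $\lambda$; the gain is that your argument covers general stage abscissae without modification and cleanly separates the two requirements (mean $-\alpha$ versus constant value at stage nodes).
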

\begin{proof}
Let $h_0 > 0$ and $-\alpha  < 0$ be given.  Let $\xi(\cdot)$ be the stability function of $\mathcal{M}$.  Since $\mathcal{M}$ has local truncation error of order $p \geq 1$ there exists $\delta > 0$ so that if $r\in (0,\delta)$, then $|\xi(1+r)| > 1$.  Let $D > \alpha$ be such that $D+\alpha \in (0,\delta)$, $\lambda(t) = D\cos(\frac{2\pi(t-t_0)}{h_0})- \alpha$, and note that the right endpoint of the Sacker-Sell spectrum of $\dot{x} = \lambda(t)x$ is $-\alpha$.  The numerical solution of \eqref{eq:1dnalin} with the method $\mathcal{M}$ using the fixed step-size $h_0$ is $x_{n+1} =\xi(h_0(D-\alpha))x_n$ and $\xi(h_0(D-\alpha))| > 1$.  It follows that $|x_n| \rightarrow \infty$ as $n \rightarrow \infty$ at a rate of $|\xi(h_0(D-\alpha))|^n$.\qed
\end{proof}

The geometric intuition for Theorem \ref{thm:1dcounterexample} is that time-dependent oscillations of $h\lambda(t)$ into and out of the linear stability domain of a method can trigger instabilities in the numerical solution.   The following proof of Theorems \ref{thm:maintheorem1} and \ref{thm:maintheorem2} for scalar ODEs of the form \eqref{eq:1dnalin} shows how we can control the accuracy of the Lyapunov and Sacker-Sell spectrum of the numerical solution using bounds on the local truncation error to guarantee exponential decay.
 
\begin{proof}[Proof of Theorems \ref{thm:maintheorem1} and \ref{thm:maintheorem2} in one dimension]\label{prf:1dprf}

Because the method $\mathcal{M}$ has local truncation error of order $p\geq 1$ and $\lambda(t)$ is bounded and $C^{p+1}$, there exists $h_1^* > 0$ so that if $h=\{h_n\}_{n=0}^{\infty}$ is any sequence of step-sizes with $\|h\|_{\infty} \in (0,h_1^*)$, then 
$$\Phi^{\lambda}(n;h) = \exp\left( \int_{t_n}^{t_{n+1}}\lambda(\tau)d\tau \right) + E(n;h)^{\lambda} \equiv I^{\lambda}(n;h) +E^{\lambda}(n;h) $$
where $E^{\lambda}(n;h) = K^{\lambda}(n;h) h_n^{p+1}$ and $\text{sup}_{n\geq 0}|K^{\lambda}(n;h)| \leq K^{\lambda}$ for some $K^{\lambda} > 0$.  If $n \geq m$ and $\|h\|_{\infty} \in (0,h_1^*)$, then
\begin{equation}\label{eq:1dthm1}
\prod_{k=n}^{m}\Phi^{\lambda}(k;h) =\left( \prod_{k=m}^{n}(1+E^{\lambda}(k;h)(I^{\lambda}(k;h))^{-1})\right)\exp\left(\int_{t_m}^{t_{n+1}}\lambda(\tau)d\tau\right) .
\end{equation}
Let $h_2 \in (0,h_1^*]$ be so small that if $h=\{h_n\}_{n=0}^{\infty}$ is any sequence of step-sizes with $\|h\|_{\infty} \in (0, h_2^*)$, then $\text{sup}_{n \geq 0}\|E^{\lambda}(n;h)(I^{\lambda}(n;h))^{-1}\| \leq K^{\lambda} \|h\|_{\infty}^{p+1}e^{M\|h\|_{\infty}} < 1/2$.  So, if $\|h\|_{\infty} \in (0, h_2^*)$, and $n \geq m \geq 0$, then \eqref{eq:1dthm1} implies that 
\begin{multline}\label{eq:1dthm3}
\small
 \exp\left(\int_{t_m}^{t_{n}}\lambda(\tau)d\tau - 2\sum_{k=m}^{n}|E^{\lambda}(k;h)(I^{\lambda}(k;h))^{-1}|\right) \leq \prod_{k=n}^{m}\|\Phi^{\lambda}(k;h)\|   \leq \\
\exp\left(\int_{t_m}^{t_{n+1}}\lambda(\tau)d\tau + \sum_{k=m}^{n}E^{\lambda}(k;h)(I^{\lambda}(k;h))^{-1}\right).
\end{multline}
For any sequence of step-sizes $h$ with $\|h\|_{\infty} \in (0, h_2^*)$ we have
\begin{equation}\label{eq:1dthm4}|\sum_{k=m}^{n}E^{\lambda}(k;h)(I^{\lambda}(k;h))^{-1}| \leq\sum_{k=m}^{n} K^{\lambda} h_k^{p+1}e^{M\|h\|_{\infty}^p} \leq K^{\lambda}e^{M\|h\|^p}\|h\|_{\infty}^p(t_{n+1}-t_m).
\end{equation}
If $\|h\|_{\infty} \in (0,h_2^*)$, then the conclusions of Theorem \ref{thm:maintheorem2} follow from inequalities \eqref{eq:1dthm3} and \eqref{eq:1dthm4}.  The conclusion of Theorem \ref{thm:maintheorem1} follows by letting $\varepsilon > 0$ be given and then setting $h^*$ to be so small that if $\|h\|_{\infty} \in (0, h^*)$, then $K^{\lambda}e^{M\|h\|_{\infty}^p}\|h\|_{\infty}^p < \varepsilon/2$.\qed
\end{proof}

From experience it is clear that certain subsets of A-stable Runge-Kutta methods, such as AN- and L-stable methods, have superior stability properties compared to other classes of implicit and explicit methods.  For an AN-stable Runge-Kutta method $\mathcal{M}$, if $\text{Re}(\lambda(t)) \leq 0$ on some interval $[\tau_1,\tau_2]$, then $|\Phi^{\lambda}(n;h)| \leq 1$ whenever $t_n,t_{n+1} \in [\tau_1,\tau_2]$.  We extend this type of analysis to methods that are not AN-stable.  Fix a step-size sequence $h = \{h\}_{n=0}^{\infty}$ and for each $n \geq 0$ consider the following associated mean autonomous ODE:
\begin{equation}\label{eq:1dnalinmean}
\dot{w} = \xi_n w, \quad \xi_n :=\xi(\lambda; h_n) = \frac{1}{h_n}\int_{t_n}^{t_n+h_n}\lambda(\tau)d\tau.
\end{equation}
Suppose that the approximate solution of \eqref{eq:1dnalin} at time $t_n$ is given by $z_n$.  Then the exact solutions of \eqref{eq:1dnalin} and \eqref{eq:1dnalinmean} with the initial condition $z(t_n) = z_n$ are the same:
$$w(t_n+h_n) = z(t_n+h_n) = \exp\left(\int_{t_n}^{t_n+h_n}\lambda(\tau)d\tau\right)z_n.$$
The solution of \eqref{eq:1dnalin} and \eqref{eq:1dnalinmean} by $\mathcal{M}$ using the step-size $h_n$ are then given by
$$w(t_n+h_n) \approx w_{n+1} = \Psi(h_n\xi_n)w_n, \quad z(t_n+h_n) \approx z_{n+1} = \Phi^{\lambda}(n;h)z_n.$$
Since the exact solutions are the same, there exists $h^* > 0$ so that if $\|h\| \in (0, h^*)$, then 
\begin{equation}\label{eq:stabfcn}
\Phi^{\lambda}(n;h) = \Psi(h_n\xi_n) + \mathcal{O}(h_n^{p+1}), \quad n \geq 0.
\end{equation}

Equation \eqref{eq:stabfcn} implies the following theorem.

\begin{theorem}\label{thm:stabfcntheorem}
Let $S$ be the linear stability region of a Runge-Kutta method $\mathcal{M}$ and let $h_n > 0$.  For each $\varepsilon \in (0,1)$ define $S'(\varepsilon) =\{z\in S: |\Psi(z)| \leq 1-\varepsilon\}$.  If there exists $\varepsilon \in (0,1)$ so that $\int_{t_n}^{t_n+h_n}\lambda(\tau)d\tau \in S'(\varepsilon)$ and $|\Phi^{\lambda}(n;h) - \Psi(h_n\xi_n)| \leq \varepsilon$, then $|\Phi^{\lambda}(n;h)| \leq 1$.  If there exists $\overline{h} > 0$ and $\varepsilon \in (0,1)$ so that $h_n \lambda(t_n+h_n) \in S'(\varepsilon)$ for all $h_n \in (0,\overline{h})$, then there exists $h^* \in (0,\overline{h}]$ so that if $h_n \in (0,h^*)$, then $|\Phi^{\lambda}(n;h)| \leq 1$.
\qed
\end{theorem}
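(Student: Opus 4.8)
The whole statement rests on the approximate identity \eqref{eq:stabfcn}, $\Phi^{\lambda}(n;h)=\Psi(h_n\xi_n)+\mathcal{O}(h_n^{p+1})$, together with the observation from \eqref{eq:1dnalinmean} that $h_n\xi_n=\int_{t_n}^{t_n+h_n}\lambda(\tau)\,d\tau$. The first assertion is then just a triangle inequality: the hypothesis $\int_{t_n}^{t_n+h_n}\lambda(\tau)\,d\tau\in S'(\varepsilon)$ means precisely that $h_n\xi_n\in S$ and $|\Psi(h_n\xi_n)|\le 1-\varepsilon$, so writing $\Phi^{\lambda}(n;h)=\Psi(h_n\xi_n)+\bigl(\Phi^{\lambda}(n;h)-\Psi(h_n\xi_n)\bigr)$ and invoking the second hypothesis $|\Phi^{\lambda}(n;h)-\Psi(h_n\xi_n)|\le\varepsilon$ gives $|\Phi^{\lambda}(n;h)|\le(1-\varepsilon)+\varepsilon=1$.

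For the second assertion the plan is to verify, for all sufficiently small $h_n$, the two hypotheses of the first assertion with $\varepsilon$ replaced by $\varepsilon/2$. The bound $|\Phi^{\lambda}(n;h)-\Psi(h_n\xi_n)|\le\varepsilon/2$ holds once $h_n$ is small, since this quantity is $\mathcal{O}(h_n^{p+1})$ with $p\ge 1$ by \eqref{eq:stabfcn}. It remains to check $h_n\xi_n\in S'(\varepsilon/2)$. As $\lambda$ is $C^{p+1}$, hence $C^1$ near $t_n$, Taylor's theorem gives $h_n\xi_n=\int_{t_n}^{t_n+h_n}\lambda(\tau)\,d\tau=h_n\lambda(t_n+h_n)+\mathcal{O}(h_n^{2})$, so $h_n\xi_n$ differs by $\mathcal{O}(h_n^2)$ from the point $h_n\lambda(t_n+h_n)$, which lies in $S'(\varepsilon)$ by hypothesis (valid for $h_n\in(0,\overline h)$, so I take $h^*\le\overline h$). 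To upgrade membership in $S'(\varepsilon)$ to membership in $S'(\varepsilon/2)$ under this perturbation I would argue by compactness: since $\sup_t|\lambda(t)|\le M$, both $h_n\lambda(t_n+h_n)$ and $h_n\xi_n$ lie in the fixed compact disk $\overline D:=\{z:|z|\le M\overline h\}$; the set $\overline D\cap\{z:|\Psi(z)|\le 1-\varepsilon\}$ is compact and is contained in the open set $\{z:|\Psi(z)|<1-\varepsilon/2\}$ (which in particular avoids the poles of $\Psi$), hence lies at a positive distance $\delta>0$ from the complement of that open set. Taking $h^*\in(0,\overline h]$ small enough that \eqref{eq:stabfcn} applies, that the $\mathcal{O}(h_n^{p+1})$ error is $\le\varepsilon/2$, and that the $\mathcal{O}(h_n^2)$ displacement is $<\delta$ for every $h_n\in(0,h^*)$, we conclude $h_n\xi_n\in\{|\Psi|<1-\varepsilon/2\}\subseteq S'(\varepsilon/2)$, and the first assertion yields $|\Phi^{\lambda}(n;h)|\le 1$.

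The one delicate point, and the step I would write out in full, is this transfer from $S'(\varepsilon)$ to $S'(\varepsilon/2)$: because $\Psi$ is only a rational function---and for an implicit method has poles---one cannot invoke uniform continuity on all of $\mathbb{C}$, so the argument must be localized to a fixed disk and must use that a point of $S'(\varepsilon)$ sits a definite amount inside the smaller sublevel set $\{|\Psi|<1-\varepsilon/2\}$, which is exactly what makes the $\mathcal{O}(h_n^2)$ perturbation harmless. Everything else is bookkeeping with \eqref{eq:stabfcn} and the elementary estimate $\bigl|\int_{t_n}^{t_n+h_n}\bigl(\lambda(\tau)-\lambda(t_n+h_n)\bigr)\,d\tau\bigr|\le h_n^{2}\,\sup_{[t_n,t_n+h_n]}|\lambda'|$.
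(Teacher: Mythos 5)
The paper gives no explicit argument for this theorem (it appears as a stated consequence of Equation \eqref{eq:stabfcn} with a \textsc{q.e.d.} mark), so your write-up is necessarily a fill-in. Your treatment of the first assertion --- identifying $\int_{t_n}^{t_n+h_n}\lambda(\tau)\,d\tau = h_n\xi_n$, using the definition of $S'(\varepsilon)$, and applying the triangle inequality --- is exactly the intended one-line argument and is correct.

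For the second assertion, the structure of your argument (Taylor-expand $\lambda$ about $t_n+h_n$ to get $h_n\xi_n=h_n\lambda(t_n+h_n)+\mathcal{O}(h_n^2)$, then pass from membership in $S'(\varepsilon)$ to membership in $S'(\varepsilon/2)$ by a compactness/positive-distance argument localized to the disk $\overline D$, and finally invoke the first assertion) is sound as far as it goes, and your remark about poles is the right precaution. The genuine gap is one step earlier: you should have noticed that the hypothesis ``$h_n\lambda(t_n+h_n)\in S'(\varepsilon)$ for all $h_n\in(0,\overline h)$'' is unsatisfiable as written for any Runge--Kutta method and any bounded $\lambda$. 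The stability function of a Runge--Kutta method always satisfies $\Psi(0)=1$, so $0\notin S'(\varepsilon)$ for every $\varepsilon>0$; yet $h_n\lambda(t_n+h_n)\to 0$ as $h_n\to 0^+$, and $S'(\varepsilon)$ is closed, so $h_n\lambda(t_n+h_n)$ must exit $S'(\varepsilon)$ for small $h_n$. Your own compactness step actually exposes this: the hypothesis would make $0$ a limit point of the compact set $K=\overline D\cap S'(\varepsilon)$ and hence force $0\in K\subset\{|\Psi|<1-\varepsilon/2\}$, contradicting $|\Psi(0)|=1$. Thus the second implication is vacuously true, your derivation of its conclusion is redundant, and a careful proof should flag this and either restate the hypothesis (e.g.\ requiring $h_n\lambda(t_n+h_n)\in S$ rather than $S'(\varepsilon)$, or requiring it only for $h_n$ bounded away from $0$) or note that the claim as written is empty.
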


We close this section by remarking that we cannot extend equation \eqref{eq:stabfcn} in a straightforward way to higher-dimensional problems since for $d \geq 2$ the matrix exponential function $e^{\int_{t_n}^{t}A(\tau)d\tau}$ is not in general a solution of \eqref{eq:lineq} for $t \geq t_n \geq t_0$.  It is necessary to employ a time-dependent change of variables to reduce the analysis of \eqref{eq:lineq} to a scalar test problem of the form of \eqref{eq:1dnalin}.

\subsection{Proof of the main results for linear ODEs}\label{sec:linearresultsproofs}

Let $X(t)$ be any fundamental matrix solution of \eqref{eq:lineq} and let $X(t) = Q(t)R(t)$ be a QR factorization where $Q(t)$ is orthogonal and $R(t)$ is upper triangular with positive diagonal entries.  Without loss of generality we can assume that $Q(t)$ is the orthogonal matrix of the corresponding upper triangular ODE \eqref{eq:lineqB}.  For each $n \geq 0$ let the transition matrix $X(t,t_n)$ be the unique $d \times d$ matrix solution of the following matrix ODE IVP:
\begin{equation}\label{eq:Atrans}
\left\{
\begin{array}{lcr}
 \dot{\Psi}(t) = A(t) \Psi(t),  \quad t > t_{n}\\
 \Psi(t_{n}) = I_{d}
 \end{array}
 \right.
\end{equation}
where $\Psi(t) \in \mathbb{R}^{d\times d}$ and $I_d$ is the $d \times d$ identity matrix.  We can factor $X(t_n)$ as
 $$X(t_n) = X(t_n,t_{n-1})\cdot \hdots \cdot X(t_1,t_0)X(t_0).$$
Similarly for each $n \geq 0$ we let $R(t,t_n)$ be the unique $d\times d$ matrix solution of the following matrix ODE IVP:
\begin{equation}\label{Btrans}
\left\{
\begin{array}{lcr}
 \dot{\Phi}(t) = B(t) \Phi(t), \quad t > t_{n}\\
 \Phi(t_{n}) = I_d
 \end{array}
 \right.
\end{equation}
where $\Phi(t) \in \mathbb{R}^{d \times d}$.  We can then factor $R(t_n)$ as 
$$R(t_n) = R(t_n,t_{n-1}) \cdot \hdots \cdot R(t_1,t_0)R(t_0).$$
Notice that we have $X(t,t_{n}) =Q(t)R(t,t_n)Q(t_n)^T$ for $n \geq 0$. The local error equations
\begin{equation}\label{eq:truncation_error}
\Phi^A(n;h)= X(t_{n+1},t_n) + E^A(n;h), \quad \Phi^B(n;h) = R(t_{n+1},t_n)+E^B(n;h)
\end{equation}
and the definition $\overline{F}(h;n) := -E^B(n;h)+Q(t_{n+1})^T E^A(n;h) Q(t_n)$ imply that
\begin{equation}\label{eq:phiABrelation}
\Phi^A(n;h) = Q(t_{n+1})(\Phi^B(n;h)+\overline{F}(n;h))Q(t_n)^T, \quad 
\end{equation}
where $\|\overline{F}(n;h)\| \leq L(\|E^A(n;h)\| + \|E^B(n;h)\|)$ for some $L > 0$ since $Q_n$ and $Q(t_n)$ are orthogonal.  By the assumption made in Section \ref{sec:mainresults}, $\|h\|_{\infty}$ is always such that $\Phi^A(n;h)$ is invertible for all $n \geq 0$ so we can let $Q_0 :=Q(t_0)$ and inductively form QR factorizations 
\begin{equation}\label{eq:phiAdisQR}
\Phi^A(n;h) Q_n = Q_{n+1}R^A(n;h), \quad n \geq 0
\end{equation}
where $Q_n$ is orthgonal and $R^A(n;h)$ is upper triangular with positive diagonal entries for all $n \geq 0$.  Combining \eqref{eq:phiABrelation} and \eqref{eq:phiAdisQR} results in the equation
\begin{equation}\label{eq:RAphiB}
R^A(n;h) = Q_{n+1}^T Q(t_{n+1})(\Phi^B(n;h)+\overline{F}(n;h))Q(t_n)^T Q_n.
\end{equation}

The Lyapunov and Sacker-Sell spectra of $v_{n+1} = (\Phi^B(n;h)+\overline{F}(n;h))v_n$ and $x_{n+1} = \Phi^A(n;h)x_n$ coincide since $x_n = Q(t_n) y_n$ is a discrete Lyapunov transformation.

\begin{proof}[Proof of Theorem \ref{thm:maintheorem1}] 
By the estimates \eqref{eq:1dthm3} and \eqref{eq:1dthm4} in the proof of Theorems \ref{thm:maintheorem1} and \ref{thm:maintheorem2} in one dimension, there exists $h_1^* > 0$ so small that if $h$ is any sequence of step-sizes with $\|h\|_{\infty} \in (0, h_1^*)$, then 
$$\beta_i^B = \beta_i + \mathcal{O}(\|h\|_{\infty}^p) \text{ and } \alpha_i^B = \alpha_i + \mathcal{O}(h_{\text{max}}^{p}).$$

Let $\varepsilon > 0$ be given.  By continuity of the Sacker-Sell spectrum there exists $\delta > 0$ so that if $\|\overline{F}(n;h) \| < \delta$, then the endpoints of the Sacker-Sell spectrum of $v_{n+1} = (\Phi^B(n;h)+\overline{F}(n;h))v_n$ (and hence of $x_{n+1} = \Phi^A(n;h)x_n$) satisfy
$$|\alpha_i^A-\alpha_i| < \varepsilon \text{ and } |\beta_i^A-\beta_i| < \varepsilon, \quad i=1,\hdots,d.$$
We can always bound $\|\overline{F}(n;h)\| < \delta$ as follows.  Since $\mathcal{M}$ has local truncation error of order $p \geq 1$, we can choose $h_2^* \in (0,h_1^*]$ be so small that if $\|h\|_{\infty} < h_2^*$, then $\|\overline{F}(n;h)\| = \mathcal{O}(h_{n}^{p+1}) = \mathcal{O}(\|h\|_{\infty}^{p+1})$.  Then we can choose $h^* \in (0,h_2^*]$ so small that $\|\overline{F}(n;h)\| < \delta$.\qed
\end{proof}

We assume for the remainder of this section that \eqref{eq:lineq} satisfies Assumption \ref{as:assumption2}.  The proof of Theorem \ref{thm:maintheorem2} is accomplished using several technical lemmas.
 \begin{lemma}\label{DISpreserve}
There exists $h^* > 0$ so that if $h=\{h_n\}_{n=0}^{\infty}$ is any sequence of step-sizes with $\|h\|_{\infty} \in (0, h^*)$, then the system $y_{n+1} = \Phi^B(n;h)y_n$ has a p-approximate discrete integral separation structure.
 \end{lemma}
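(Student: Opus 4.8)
The plan is to reduce the lemma to the one-dimensional local-error analysis already carried out in the proof of Theorems \ref{thm:maintheorem1} and \ref{thm:maintheorem2} in one dimension, applied separately to each diagonal scalar equation $\dot y_i = B_{i,i}(t)y_i$, and then to feed the resulting estimate into the dichotomy of Assumption \ref{as:assumption2}.

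First I would record the regularity needed: since $A(\cdot)$ is bounded and $C^{p+1}$, the orthogonal solution $Q(t)$ of \eqref{eq:Qeqn} is $C^{p+2}$, hence $B(t) = Q(t)^T A(t) Q(t) - Q(t)^T \dot Q(t)$ is bounded and $C^{p+1}$, so each $B_{i,i}(\cdot)$ is a bounded $C^{p+1}$ scalar coefficient and the one-dimensional estimates apply to it. This gives, for $\|h\|_\infty$ below some $h_1^*$, that $\Phi^B_{i,i}(n;h) = I^B_i(n;h) + E^B_i(n;h)$ with $I^B_i(n;h) := \exp(\int_{t_n}^{t_{n+1}} B_{i,i}(\tau)\,d\tau)$ and $|E^B_i(n;h)| \le \bar K_i h_n^{p+1}$, where $\bar K_i$ is independent of $n$ and $h$. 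Since $B$ is bounded, $I^B_i(n;h)$ stays bounded above and away from $0$, so after shrinking $h_1^*$ we also have $\inf_n \Phi^B_{i,i}(n;h) > 0$ and $\sup_n |E^B_i(n;h)(I^B_i(n;h))^{-1}| < 1/2$ — the smallness exploited in \eqref{eq:1dthm3}.

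Next, fixing $i < j$ and $n \ge m \ge 0$, I would factor, as in \eqref{eq:1dthm1},
\[
\prod_{k=m}^{n} \Phi^B_{i,i}(k;h)\big(\Phi^B_{j,j}(k;h)\big)^{-1} = \exp\!\Big(\int_{t_m}^{t_{n+1}}\!\! \big(B_{i,i}(\tau) - B_{j,j}(\tau)\big)\,d\tau\Big)\,\frac{\prod_{k=m}^{n}\big(1 + E^B_i(k;h)(I^B_i(k;h))^{-1}\big)}{\prod_{k=m}^{n}\big(1 + E^B_j(k;h)(I^B_j(k;h))^{-1}\big)},
\]
take logarithms, and bound the two correction sums as in \eqref{eq:1dthm4} using $|\ln(1+x)| \le 2|x|$ for $|x| < 1/2$: their absolute value is at most $C_{i,j}\|h\|_\infty^{p}(t_{n+1} - t_m)$ with $C_{i,j}$ depending only on $\bar K_i,\bar K_j$ and $\sup|B|$. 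Writing $t_{n+1} - t_m = (t_n - t_m) + h_n$ and absorbing $h_n \le \|h\|_\infty$ together with the bounded edge term $\int_{t_n}^{t_{n+1}}(B_{i,i}-B_{j,j})$ into a constant, I obtain $\ln\big(\prod_{k=m}^n \Phi^B_{i,i}(k;h)(\Phi^B_{j,j}(k;h))^{-1}\big) = \int_{t_m}^{t_n}(B_{i,i}-B_{j,j})\,d\tau + r_{i,j}$ with $|r_{i,j}| \le C_{i,j}\|h\|_\infty^{p}(t_n - t_m) + \tilde C_{i,j}\|h\|_\infty$.

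Finally I would split on the two cases of Definition \ref{def:iss} that Assumption \ref{as:assumption2} supplies for the pair $(i,j)$. If $B_{i,i},B_{j,j}$ are integrally separated, \eqref{eq:IS} bounds the integral below by $a_{i,j}(t_n - t_m) + b_{i,j}$, and choosing $h^*$ small enough that $C_{i,j}\|h\|_\infty^p \le a_{i,j}/2$ yields \eqref{eq:adiss1} with rate $a_{i,j}/2$ — the first alternative of Definition \ref{adiss}. If instead \eqref{eq:nonIS} holds, then for each $\varepsilon > 0$, combining $|\int_{t_m}^{t_n}(B_{i,i}-B_{j,j})| \le M_{i,j}(\varepsilon) + \varepsilon(t_n - t_m)$ with the bound on $r_{i,j}$ gives \eqref{eq:adiss2} with $K_{i,j} = C_{i,j}$ (independent of $\varepsilon$) and $M_{i,j}$ absorbing $M_{i,j}(\varepsilon) + \tilde C_{i,j}\|h\|_\infty$ — the second alternative. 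Taking $h^*$ to be the minimum of $h_1^*$ and the finitely many thresholds over all pairs $i < j$ completes the proof. The one step that is more than bookkeeping is the integrally-separated case, where the $\mathcal{O}(\|h\|_\infty^p)$ local-error perturbation of the exponential rate must not swallow the fixed positive gap $a_{i,j}$; since $a_{i,j}$ is a constant this is immediate once $\|h\|_\infty$ is small, so I expect no genuine obstacle — the lemma is essentially the one-dimensional estimate applied diagonally, together with the observation that continuous integral separation of $B$ passes, up to an $\mathcal{O}(\|h\|_\infty^p)$ rate error, to discrete integral separation of $\Phi^B$.
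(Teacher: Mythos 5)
Your argument is correct and follows essentially the same route as the paper's own proof: reduce to the diagonal scalar equations, expand $\Phi^B_{i,i}(n;h) = I^B_i(n;h) + E^B_i(n;h)$ with $|E^B_i| = \mathcal{O}(h_n^{p+1})$, factor the ratio product to isolate $\exp\bigl(\int(B_{i,i}-B_{j,j})\bigr)$ from the $\mathcal{O}(\|h\|_\infty^p)$ correction, and split on the two alternatives of Definition~\ref{def:iss} to recover \eqref{eq:adiss1} or \eqref{eq:adiss2}. The minor differences (making explicit that $B \in C^{p+1}$, passing to logarithms, and converting $t_{n+1}-t_m$ to $t_n-t_m$ by absorbing the boundary term) are cosmetic; the key threshold condition you impose, $C_{i,j}\|h\|_\infty^p \le a_{i,j}/2$, is the same constraint the paper enforces in \eqref{eq:DISpreserve3}.
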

 \begin{proof}
Given any sequence of step-sizes $h=\{h_n\}_{n=0}^{\infty}$, the diagonal entries $\Phi^B_{i,i}(n;h)$ are such that $y_{n+1}^i = \Phi^B_{i,i}(n;h)y_n^i$ are approximations to the scalar ODE $\dot{y}_i = B_{i,i}(t)y_i$ with $y_i(t_0) = y_0^i$ using the method $\mathcal{M}$.  Because $\mathcal{M}$ has local truncation of order $p$ and $B(t)$ is bounded and $C^{p+1}$, there exists $h_1^* > 0$ so that if $h$ is such that $\|h\|_{\infty} < h_1^*$, then
$$\Phi^B_{l,l}(n;h) = \exp\left(\int_{t_n}^{t_{n+1}}B_{l,l}(\tau)d\tau\right) + E_{l}^B(n;h) \equiv I_l^B(n;h)+E_{l}^B(n;h), \quad n \geq 0$$
where $E_{l,l}^B(n;h) = K_l^B(n;h)h_n^{p+1}$ and $\text{sup}_{n \geq 0} K_l^B(n;h) \leq K_l^B < \infty$ for $l=1,\hdots,d$.  There exists $h_2^* > 0$ with $h_2^* \in (0,h_1^*]$ so that if $h$ is such that $\|h\|_{\infty} \in (0,h_2^*)$, then we have $\text{inf}_{n \geq 0}\Phi^B_{l,l}(n;h) > 0$ for $l=1,\hdots,d$ and therefore if $n\geq m$ and $i > j$, then
\begin{equation}\label{eq:DISpreserve1}\prod_{k=n}^{m}\frac{\Phi^B_{i,i}(k;h)}{\Phi_{j,j}^B(k;h)} = e^{\int_{t_m}^{t_{n+1}}B_{i,i}(\tau)-B_{j,j}(\tau)d\tau}\left[\prod_{j=n}^{m} \frac{1+E_i^B(k;h)(I_i^B(k;h))^{-1}}{1+E_j^B(k;h)(I_j^B(k;h))^{-1}}\right].
\end{equation}
Note that since $\text{inf}_{n \geq 0}\Phi^B_{l,l}(n;h) > 0$ for $l=1,\hdots,d$ and $\|h\| \in (0,h_2^*)$ it follows that $\{\Phi^B_{l,l}(n;h)^{-1}\}_{n=0}^{\infty}$ is uniformly bounded for $l=1,\hdots,d$. Since $B(t)$ is bounded, for $i=1,\hdots,d$ there exists $M_i^B > 0$ so that $\text{sup}_{t \geq t_0}|B_{i,i}(t)| \leq M_i^B$.  Therefore, there exists $h_3^* \in (0,h_2^*]$ so that if $\|h\|_{\infty} \in (0, h_3^*)$, then  
\begin{equation}\label{eq:DISpreserve2.2}
\text{sup}_{n \geq 0}|E_l^B(n;h)I_l^B(n;h)^{-1}| \leq K_l^B\|h\|^{p+1}_{\infty} e^{\|h\|_{\infty} M_l^B} < 1/2, \quad l=1,\hdots,d.
\end{equation}

 Assumption \ref{as:assumption2} implies that if $i > j$, then $B_{i,i}$ and $B_{j,j}$ satisfy \eqref{eq:IS} or \eqref{eq:nonIS}.  Let $IS$ be the set of all pairs of integers $(i,j)$ with $1\leq i,j \leq d$ and $i > j$ so that $B_{i,i}$ and $B_{j,j}$ satisfy \eqref{eq:IS}.  If $(i,j) \in IS$, then \eqref{eq:IS}, \eqref{eq:DISpreserve1}, \eqref{eq:DISpreserve2.2}, and $\|h\|_{\infty} \in (0, h_3^*)$ imply that if $n \geq m$, then
\begin{multline*}
\prod_{k=n}^{m}\Phi^B_{i,i}(k;h)(\Phi_{j,j}^B(k;h))^{-1} \geq e^{a_{i,j}(t_{n+1}-t_m)+b_{i,j}}e^{-\sum_{k=m}^{n}(2K_i^B e^{\|h\|_{\infty}M_i^B}+K_j^B e^{\|h\|_{\infty}M_j^B})h_k^{p+1}} \geq\\ \exp\left((a_{i,j}-(2K_i^B e^{\|h\|_{\infty}M_i^B}+K_j^B e^{\|h\|_{\infty}M_j^B})\|h\|_{\infty}^p )(t_{n+1}-t_m)+b_{i,j}\right).
\end{multline*}
Let $h^* \in (0,h_3^*]$ be such that if $\|h\|_{\infty} \in (0, h^*)$, then
\begin{equation}\label{eq:DISpreserve3}
a_{i,j}-(2K_i^B e^{\|h\|_{\infty}M_i^B}+K_j^B e^{\|h\|_{\infty}M_j^B})\|h\|_{\infty}^p  > 0
\end{equation}
for all $(i,j) \in IS$.  It then follows that if $(i,j) \in IS$ and $\|h\|_{\infty} \in (0, h^*)$, then $\Phi^B_{i,i}(n;h)$ and $\Phi^B_{j,j}(n;h)$ satisfy satisfy an inequality of the form \eqref{eq:adiss1}.

If $(i,j) \notin IS$ so that $B_{i,i}$ and $B_{j,j}$ satisfy \eqref{eq:nonIS}, then \eqref{eq:DISpreserve1}, \eqref{eq:DISpreserve2.2}, and $\|h\|_{\infty} < h_3^*$ imply that given $\varepsilon > 0$, there exists $M_{i,j}(\varepsilon)$ so that if $n \geq m$, then 
\begin{multline*}
\prod_{k=n}^{m}\Phi^B_{i,i}(k;h)(\Phi_{j,j}^B(k;h))^{-1} \leq \exp\left(M_{i,j}+\frac{\varepsilon(t_{n+1}-t_m)}{2}\right)\exp\left(\sum_{k=m}^{n}(K_i^B+K_j^B)h_k^{p+1}\right)   \\
\leq \exp\left(M_{i,j}+(\varepsilon+(2K_i^B+K_j^B)\|h\|_{\infty}^p)(t_{n+1}-t_m)\right).
\end{multline*}
Similarly, if $\|h\|_{\infty} \in (0, h^*)$, then 
$$\prod_{k=n}^{m}\Phi^B_{i,i}(k;h)(\Phi_{j,j}^B(k;h))^{-1}  \geq \exp\left(-M_{i,j}-(\varepsilon+(2K_i^B+K_j^B)\|h\|_{\infty}^p)(t_{n+1}-t_m)\right)$$
 and it then follows that \eqref{eq:adiss2} is satisfied whenever $\|h\|_{\infty} \in (0, h^*)$.  Therefore, if $\|h\|_{\infty} < h^*$, then $\text{inf}_{n\geq 0}\Phi^B_{i,i}(n;h) > 0$ for $n \geq 0$ and $i=1,\hdots,d$ and conditions \eqref{eq:adiss1} and \eqref{eq:adiss2} are satisfied.  It follows that $y_{n+1} = \Phi^B(n;h)y_n$ has a p-approximate discrete integral separation structure.\qed
\end{proof}

The size that $h^* > 0$ must be taken in Lemma \ref{DISpreserve} depends on the integral separation through the inequality \eqref{eq:DISpreserve3} and on the strength of growth/decay by enforcing that $\text{inf}_{n \geq 0}\Phi^B_{i,i}(n;h) > 0$ for $i=1,\hdots,d$.  Stronger integral separation between diagonal elements of $B(t)$  (i.e. larger values of $a_{i,j}$) and larger values of $|\int_{t_n}^{t_{n+1}}B(\tau)d\tau)|$ require the smaller step-sizes to ensure the discrete system inherits these properties.

  \begin{lemma}\label{diagest}
There exists $h^* > 0$ so that if $\|h\|_{\infty} \in (0, h^*)$, then $J(n;h):=\|R^A(n;h)-\Phi^B(n;h)\| = \mathcal{O}(\|h\|_{\infty}^{p+1})$.
\end{lemma}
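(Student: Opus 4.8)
The plan is to obtain the estimate directly from the discrete QR perturbation theory of Theorem \ref{thm:disQR}, using Lemma \ref{DISpreserve} to supply its hypotheses. First I would collect the facts already available from the order of $\mathcal{M}$ and the boundedness of $A$ and $B$: by \eqref{eq:truncation_error} and \eqref{eq:phiABrelation} we have
$$\Phi^A(n;h) = Q(t_{n+1})\big(\Phi^B(n;h) + \overline{F}(n;h)\big)Q(t_n)^T$$
with $\sup_{n\ge 0}\|\overline{F}(n;h)\| = \mathcal{O}(\|h\|_\infty^{p+1})$, and, by a Gronwall bound on \eqref{Btrans} together with the order-$p$ local error estimate, $\sup_{n\ge 0}\|\Phi^B(n;h)\|$ is bounded once $\|h\|_\infty$ is small. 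I would also invoke the part of the proof of Lemma \ref{DISpreserve} showing that $\Phi^B(n;h)$ is upper triangular with $\inf_{n\ge 0}\Phi^B_{i,i}(n;h)>0$ for $\|h\|_\infty$ small.

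Next I would introduce the unperturbed one-step map $\Psi^A(n;h) := Q(t_{n+1})\Phi^B(n;h)Q(t_n)^T$, so that $\Phi^A(n;h) = \Psi^A(n;h) + F_n$ with $F_n := Q(t_{n+1})\overline{F}(n;h)Q(t_n)^T$ and $\|F_n\| = \|\overline{F}(n;h)\| = \mathcal{O}(\|h\|_\infty^{p+1})$. Because $\Phi^B(n;h)$ is upper triangular with positive diagonal, the identity $\Psi^A(n;h)Q(t_n) = Q(t_{n+1})\Phi^B(n;h)$ is already a QR factorization; hence, starting the discrete QR iteration for $\Psi^A$ from $\overline{Q}_0 = Q(t_0)$ (the same $Q_0$ used for the iteration of $\Phi^A$), uniqueness of the QR factorization gives $\overline{Q}_n = Q(t_n)$ and $\overline{R}^A(n;h) = \Phi^B(n;h)$ for all $n$.

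I would then apply Theorem \ref{thm:disQR} with base map $\Phi^A(n;h)$ and perturbation $-F_n$, so that the perturbed map is $\Psi^A(n;h)$ and its QR factor is $\overline{R}^A(n;h) = \Phi^B(n;h)$. The hypothesis that $\overline{R}^A(n;h)$ has a $p$-approximate discrete integral separation structure is exactly the conclusion of Lemma \ref{DISpreserve}, and $G = \sup_{n\ge 0}\max\{\|\overline{Q}_{n+1}^T F_n\overline{Q}_n\|,\|F_n\|\} = \sup_{n\ge 0}\|\overline{F}(n;h)\| = \mathcal{O}(\|h\|_\infty^{p+1})$ is as small as needed once $\|h\|_\infty$ is small. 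The theorem then supplies orthogonal matrices $\{\tilde{Q}_n\}_{n=0}^{\infty}$ and $K>0$ with
$$\tilde{Q}_{n+1}R^A(n;h) = \big[\Phi^B(n;h) - \overline{F}(n;h)\big]\tilde{Q}_n, \qquad \|\tilde{Q}_n - I\| \le KG = \mathcal{O}(\|h\|_\infty^{p+1}),$$
so that $R^A(n;h) = \tilde{Q}_{n+1}^T\big[\Phi^B(n;h) - \overline{F}(n;h)\big]\tilde{Q}_n$.

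Finally I would expand
$$R^A(n;h) - \Phi^B(n;h) = (\tilde{Q}_{n+1}^T - I)\Phi^B(n;h)\tilde{Q}_n + \Phi^B(n;h)(\tilde{Q}_n - I) - \tilde{Q}_{n+1}^T\overline{F}(n;h)\tilde{Q}_n$$
and bound each term using $\|\tilde{Q}_n - I\|,\|\tilde{Q}_{n+1}-I\| = \mathcal{O}(\|h\|_\infty^{p+1})$, the uniform bound on $\|\Phi^B(n;h)\|$, orthogonality of the $\tilde{Q}_n$, and $\|\overline{F}(n;h)\| = \mathcal{O}(\|h\|_\infty^{p+1})$, giving $J(n;h) = \mathcal{O}(\|h\|_\infty^{p+1})$ uniformly in $n$. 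I expect the main obstacle to be bookkeeping rather than anything conceptual: correctly matching the base/perturbed roles in Theorem \ref{thm:disQR}, verifying that the initial orthogonal matrices agree so that indeed $\overline{Q}_n = Q(t_n)$ and $\overline{R}^A(n;h) = \Phi^B(n;h)$, and ensuring that all implicit constants (from the truncation error, the Gronwall bound on $\Phi^B$, and the constant $K$ from Theorem \ref{thm:disQR}) are independent of $n$, so that the $\mathcal{O}(\|h\|_\infty^{p+1})$ estimate is genuinely uniform.
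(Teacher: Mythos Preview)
Your proposal is correct and follows essentially the same route as the paper: invoke Lemma \ref{DISpreserve} to obtain the $p$-approximate discrete integral separation structure of $\Phi^B(n;h)$, then apply Theorem \ref{thm:disQR} with base map $\Phi^A(n;h)$ and perturbed map $\Psi^A(n;h)=Q(t_{n+1})\Phi^B(n;h)Q(t_n)^T$ (whose QR iterate started at $Q(t_0)$ is exactly $\overline{Q}_n=Q(t_n)$, $\overline{R}^A(n;h)=\Phi^B(n;h)$), and read off the estimate. Your version is in fact a bit more careful than the paper's: you explicitly verify that $\overline{Q}_0=Q_0=Q(t_0)$ so the identification $\overline{R}^A(n;h)=\Phi^B(n;h)$ is legitimate, and you write out the telescoping expansion of $R^A(n;h)-\Phi^B(n;h)$ that the paper leaves implicit.
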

\begin{proof} 
Using Lemma \ref{DISpreserve} we can find $h_1^* > 0$ such that if $\|h\|_{\infty} < h_1^*$, then $\Phi^B(n;h)$ has a p-approximate discrete integral separation structure and so that $F(n;h) := -Q(t_{n+1})\overline{F}(n;h) Q(t_n)$ with $F(n;h) = \mathcal{O}(h_n^{p+1})$. Theorem \ref{thm:disQR} implies that there exists an $h^* \in (0,h_1^*]$, $K > 0$, and a sequence $\{\tilde{Q}_n\}_{n=0}^{\infty}$ with each $\tilde{Q}_n \in \mathbb{R}^{d\times d}$ orthogonal so that if $\|h\|_{\infty} < h^*$, then 
$$ \tilde{Q}_{n+1} R^A(n;h) = (\Phi^B(n;h) + E(n;h))\tilde{Q}_n, \quad \|\tilde{Q}_n - I\| \leq K G$$
where $E(n;h) = Q_{n+1}^T F(n;h) Q_n$ and $G = \text{sup}_{n\geq 0 } \{\|F(n;h)\|,\|E(n;h)\| \} = \mathcal{O}(h_n^{p+1})$.  It then follows that if $\|h\|_{\infty} < h^*$, then $J(n;h) = R^A(n;h)-\Phi^B(n;h) = \mathcal{O}(\|h\|_{\infty}^{p+1})$.\qed

\end{proof}

\begin{lemma}\label{DISpreserve2}
Suppose that the ODE \eqref{eq:lineq} satisfies Assumption \ref{as:assumption2}. Then, there exists $h^* > 0$ so that if $\|h\|_{\infty} \in (0,h^*)$, then $u_{n+1} = R^A(n;h)u_n$ has a p-approximate integral separation structure.
 \end{lemma}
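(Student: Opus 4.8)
\textbf{Proof proposal for Lemma \ref{DISpreserve2}.}
The plan is to combine Lemma \ref{DISpreserve} (the system $y_{n+1}=\Phi^B(n;h)y_n$ has a $p$-approximate discrete integral separation structure for $\|h\|_\infty$ small, with $\inf_{n\ge0}\Phi^B_{i,i}(n;h)>0$) with Lemma \ref{diagest} ($J(n;h):=\|R^A(n;h)-\Phi^B(n;h)\|=\mathcal{O}(\|h\|_\infty^{p+1})$), and to verify directly that a perturbation of size $\mathcal{O}(\|h\|_\infty^{p+1})$ in the (already upper triangular, positive-diagonal) factor $R^A(n;h)$ of a discrete QR iteration preserves the structure of Definition \ref{adiss}.

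First I would fix $h_1^*>0$ from Lemma \ref{DISpreserve} so that for $\|h\|_\infty\in(0,h_1^*)$ the matrix $\Phi^B(n;h)$ is upper triangular with $\inf_{n\ge0}\Phi^B_{i,i}(n;h)>0$ and has a $p$-approximate discrete integral separation structure, and then shrink to $h_2^*\le h_1^*$ using Lemma \ref{diagest} so that $\|R^A(n;h)-\Phi^B(n;h)\|\le C\,h_n^{p+1}$ for a fixed $C>0$. Since all norms on $\mathbb{R}^{d\times d}$ are equivalent we get entrywise $|R^A_{i,i}(n;h)-\Phi^B_{i,i}(n;h)|\le C'h_n^{p+1}$; and because $B$ is bounded, $\Phi^B_{i,i}(n;h)=\exp(\int_{t_n}^{t_{n+1}}B_{i,i}(\tau)d\tau)+\mathcal{O}(h_n^{p+1})$ is bounded above and below away from zero uniformly in $n$ once $\|h\|_\infty$ is small (cf.\ \eqref{eq:DISpreserve1}--\eqref{eq:DISpreserve2.2}). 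Hence $R^A(n;h)$ is upper triangular with positive diagonal entries, $\inf_{n\ge0}R^A_{i,i}(n;h)>0$, and $\delta_i(n;h):=R^A_{i,i}(n;h)/\Phi^B_{i,i}(n;h)-1=\mathcal{O}(h_n^{p+1})$ uniformly in $n$.

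Next, for $i>j$ I would factor
$$\prod_{k=m}^{n}\frac{R^A_{i,i}(k;h)}{R^A_{j,j}(k;h)}=\left(\prod_{k=m}^{n}\frac{\Phi^B_{i,i}(k;h)}{\Phi^B_{j,j}(k;h)}\right)\prod_{k=m}^{n}\frac{1+\delta_i(k;h)}{1+\delta_j(k;h)},$$
and take logarithms. Shrinking $\|h\|_\infty$ so that $|\delta_l(k;h)|\le 1/2$ for all $k$ and all $l=1,\dots,d$, we have $|\ln(1+\delta_l(k;h))|\le 2|\delta_l(k;h)|\le \tilde C h_k^{p+1}$, so the logarithm of the correction factor is bounded in absolute value by $2\tilde C\sum_{k=m}^{n}h_k^{p+1}\le 2\tilde C\,\|h\|_\infty^{p}(t_{n+1}-t_m)$. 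Now split into the two cases of Definition \ref{adiss} inherited from $\Phi^B(n;h)$: if $(i,j)$ satisfies \eqref{eq:adiss1} for $\Phi^B$ with rate $a_{i,j}>0$, then adding the correction shows $R^A_{i,i}(k;h)(R^A_{j,j}(k;h))^{-1}$ satisfies \eqref{eq:adiss1} with rate $a_{i,j}-2\tilde C\|h\|_\infty^{p}$, which remains positive after a further shrinking of $h^*$; if instead $(i,j)$ satisfies \eqref{eq:adiss2} for $\Phi^B$ with constant $K_{i,j}$, the correction merely enlarges it to $K_{i,j}+2\tilde C$, and \eqref{eq:adiss2} again holds. (A minor bookkeeping point: the products coming out of Lemma \ref{DISpreserve} carry $t_{n+1}-t_m$ rather than $t_n-t_m$, but since $h_n\le\|h\|_\infty$ this differs from the form in Definition \ref{adiss} only by a bounded term that can be absorbed into $b_{i,j}$ or $M_{i,j}$.) Taking $h^*$ to be the minimum of the finitely many thresholds obtained over all pairs $i>j$ yields the claim.

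The main obstacle is purely bookkeeping: one must check that the implied constants ($C$, $\tilde C$, the $a_{i,j}$, $K_{i,j}$, $M_{i,j}$) are uniform in $n$ and $m$, and in particular that $\inf_{n\ge0}R^A_{i,i}(n;h)>0$ — for which boundedness of $B$ (hence uniform two-sided bounds on $\Phi^B_{i,i}(n;h)$) together with the $\mathcal{O}(h_n^{p+1})$ perturbation from Lemma \ref{diagest} is exactly what is needed. There is no genuine analytic difficulty beyond what is already packaged in Lemmas \ref{DISpreserve} and \ref{diagest}.
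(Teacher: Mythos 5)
Your proposal matches the paper's intended proof: the paper's own proof of this lemma is the single sentence ``Combine Lemma~\ref{diagest} and the method used to prove Lemma~\ref{DISpreserve},'' and your multiplicative-perturbation argument on the diagonal ratios $\delta_i(n;h)=R^A_{i,i}(n;h)/\Phi^B_{i,i}(n;h)-1$, followed by the same split into the two cases of Definition~\ref{adiss}, is exactly the natural unpacking of that sentence. One detail to watch: you assert the per-step bound $|R^A_{i,i}(n;h)-\Phi^B_{i,i}(n;h)|\le C'h_n^{p+1}$, which is what makes the telescoping estimate $\sum_{k=m}^n h_k^{p+1}\le \|h\|_\infty^{p}(t_{n+1}-t_m)$ work cleanly; Lemma~\ref{diagest} as stated only gives a bound uniform in $n$, $J(n;h)=\mathcal{O}(\|h\|_\infty^{p+1})$, and summing that directly produces $(n-m+1)\|h\|_\infty^{p+1}$, which converts to $(t_{n+1}-t_m)$ only at the price of a factor $\|h\|_\infty/\inf_n h_n$. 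To justify the per-step version one would trace the two contributions in Theorem~\ref{thm:disQR} separately --- the $E(n;h)$ term genuinely is $\mathcal{O}(h_n^{p+1})$ per step, but the deviation $\tilde Q_n-I$ is controlled only by the uniform quantity $G$ --- or tacitly assume the step-size ratio is bounded; the paper glosses over this in exactly the same way, so this is a shared imprecision rather than a flaw unique to your argument.
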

 \begin{proof}
Combine Lemma \ref{diagest} and the method used to prove Lemma \ref{DISpreserve}.\qed
 \end{proof}
We now complete the proof of Theorem \ref{thm:maintheorem2}.  Let $h^* > 0$ be so small that if $\|h\|_{\infty} \in (0, h^*)$, then the conclusions of Lemmas \ref{DISpreserve}, \ref{diagest}, and \ref{DISpreserve2} and Theorem \ref{thm:lediscrete} hold.  The conclusions of Theorem \ref{thm:maintheorem2} are proved by combining the conclusions of Lemmas \ref{DISpreserve}, \ref{diagest}, \ref{DISpreserve2} and the conclusions of Theorems \ref{thm:ssdiss} and \ref{thm:lediscrete}. \qed


\section{Nonlinear stability}\label{sec:nonlin}

Let $\mathcal{M}$ be a one-step method with local truncation error of order $p \geq 1$.  Consider the ODE \eqref{eq:ode} and assume that $f\in C^k(\mathbb{R}^d \times (s,\infty),\mathbb{R}^d)$ for some integer $k \geq \text{max}\{3,p+1\}$.  For the remainder of this section we assume that $x(t;x_0,t_0)$ is a bounded solution of \eqref{eq:ode} with initial condition $x(t_0) = x_0$ and also that the right end-point of the Sacker-Sell spectrum of $\dot{u} = Df(x(t;x_0,t_0),t)u \equiv A(t)u$ of $x(t;x_0,t_0)$ is $-\alpha < 0$ so that $x(t;x_0,t_0)$ is uniformly exponentially stable.  We also assume that there exists $h^* > 0$ so that if $\|h\|_{\infty} \in (0,h^*)$, then the local truncation error of $\mathcal{M}$ applied to solve any IVP of \eqref{eq:ode} takes the form $C x^{(p+1)}(\xi_n;x_n,t_n) h_n^{p+1}$ where $x^{(p+1)}$ is the $(p+1)^{st}$ time-derivative, $\xi_n \in (t_n,t_{n+1})$, and $C \in \mathbb{R}$.

Fix $u_0 \in \mathbb{R}^d$ and $t \geq s \geq t_0$.  Let $u(t):=x(t;u_0,s)$ and $x(t):=x(t;x_0,t_0)$.  By Taylor expanding $f(u(t),t)$ about $x(t)$ we can express $u(t)$ as
\begin{equation}\label{eq:nonlin.1}
u(t) = X(t)X^{-1}(s)u_0 + \int_{s}^t X(t)X(\tau)^{-1}(b(\tau) + R(u_0,s,\tau))d\tau,
\end{equation}
\begin{equation*}
b(t) := f(x(t),t)-Df(x(t),t)x(t) \equiv f(x(t),t)-A(t)x(t)
\end{equation*}
\begin{equation*}
R(u_0,s,t) :=
 (u(t)-x(t))^T Hf(a(u_0,t_0,t,s),t)(u(t)-x(t),t_0))
\end{equation*} 
and where $Hf$ is the Hessian of $f(x,t)$ with respect to $x$ and $a(u_0,t_0,t,s)$ is some point on the line segment between $u(t)$ and $x(t)$.  Boundedness, uniform exponential stability of $x(t;x_0,t_0)$, and the fact that $f \in C^k(\mathbb{R}^d \times (t_0,\infty),\mathbb{R}^{d})$ for $k \geq 3$ imply that there exists $\delta_M,M > 0$ so that if $\|u_0-x_0\| < \delta_M$ and $t \geq s \geq t_0$, then 
\begin{multline}\label{eq:derivativebounds}
\text{max}\{\|x^{(p+1)}(t;u_0,s)\|,\|\frac{\partial }{\partial u}x(t;u,s)|_{u=u_0}\|, \\
\|Hf(a(u_0,s,t),t)\|,\|\frac{\partial }{\partial u}R(u,s,t)\|,\|R(u,s,t)\|\|\}\leq M.
\end{multline}

\begin{theorem}\label{thm:nonlin1} Consider the numerical solution $u_n = u(n;u_0,t_0,h)$ generated by approximating $x(t;u_0,t_0)$ with the method $\mathcal{M}$ using the initial condition $u_0$ at initial time $t_0$ and let $x_n :=x(t_n;u_0,t_0)$ for $n \geq 0$. Given $D > 0$, there exists $h^* > 0$ so that for each sequence of step-sizes $h$ with $\|h\|_{\infty} \in (0,h^*)$ there exists $\delta > 0$ so that if $\|u_0-x_0\| < \delta$, then $\text{sup}_{n \geq 0}\|u_n-x_n\| \leq D \|h\|_{\infty}^{p}$. 
\end{theorem}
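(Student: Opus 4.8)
The plan is to write the global error $e_n:=u_n-x_n$ (where $x_n=x(t_n;u_0,t_0)$, so $e_0=0$) as the solution of a linear difference equation governed by the numerical propagator $\Phi^A(n;h)$ of the variational equation \eqref{eq:linvareq} of the uniformly stable reference trajectory $x(t;x_0,t_0)$, together with a forcing term of size $\mathcal{O}(h_n^{p+1})$ per step, and then to close a discrete Gronwall/continuation estimate using the uniform exponential decay of $\Phi^A(n;h)$ supplied by Corollary \ref{cor:maincor}. First I would fix the data: since $x(t;x_0,t_0)$ is uniformly exponentially stable and $f\in C^k$ with $k\ge\max\{3,p+1\}$, there are $\delta_0>0$ and a compact neighbourhood $\mathcal{N}$ of the reference trajectory so that for $\|u_0-x_0\|<\delta_0$ the solution $x(\cdot;u_0,t_0)$ exists for all $t\ge t_0$, stays in $\mathcal{N}$, and converges to $x(\cdot;x_0,t_0)$; on $\mathcal{N}$ the bounds \eqref{eq:derivativebounds} hold with a common constant $M$, $\|\partial_x^2\varphi(\cdot,t;f,h)\|\le L_2 h$ for small $h$, and $\|Df(x(t;u_0,t_0),t)-A(t)\|=\mathcal{O}(\|u_0-x_0\|)$ uniformly in $t\ge t_0$. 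The reference equation \eqref{eq:linvareq}, with $A(t)=Df(x(t;x_0,t_0),t)$, is bounded and $C^{p+1}$ and, by hypothesis, has Sacker--Sell spectrum with right endpoint $-\alpha<0$; hence it satisfies Assumption \ref{as:assumption1} with $\max_i\beta_i<0$, and Corollary \ref{cor:maincor} yields $h_1^*>0$ and constants $K_0\ge1$, $\nu>0$ (depending only on $\mathcal{M}$ and $A$) with $\|\prod_{k=m}^{n-1}\Phi^A(k;h)\|\le K_0 e^{-\nu(t_n-t_m)}$ for all $n\ge m\ge 0$ whenever $\|h\|_\infty\in(0,h_1^*)$.

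Next I would derive the error recursion, the discrete analogue of \eqref{eq:nonlin.1}. From the form of the local truncation error of $\mathcal{M}$ and $x_{n+1}=x(t_{n+1};x_n,t_n)$ we have $x_{n+1}=\varphi(x_n,t_n;f,h_n)+C\,x^{(p+1)}(\xi_n;u_0,t_0)\,h_n^{p+1}$, so a first-order Taylor expansion of $\varphi(\cdot,t_n;f,h_n)$ at $x_n$ gives
\begin{equation*}
e_{n+1}=\partial_x\varphi(x_n,t_n;f,h_n)\,e_n+r_n-C\,x^{(p+1)}(\xi_n;u_0,t_0)\,h_n^{p+1},\qquad \|r_n\|\le\tfrac12 L_2\,h_n\|e_n\|^2 .
\end{equation*}
Differentiating the method shows $\partial_x\varphi(x_n,t_n;f,h_n)$ is the one-step map of $\mathcal{M}$ applied to $\dot v=Df(x(t;u_0,t_0),t)v$, up to $\mathcal{O}(h_n^2)$; combining this with $\|Df(x(t;u_0,t_0),t)-A(t)\|=\mathcal{O}(\|u_0-x_0\|)$ and the continuous dependence of the one-step map on its coefficient matrix, $\partial_x\varphi(x_n,t_n;f,h_n)=\Phi^A(n;h)+\mathcal{O}(h_n(h_n+\|u_0-x_0\|))$. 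Hence, as long as $u_n\in\mathcal{N}$,
\begin{equation*}
e_{n+1}=\Phi^A(n;h)\,e_n+\rho_n,\qquad \|\rho_n\|\le\tfrac12 L_2\,h_n\|e_n\|^2+C_3\,h_n(h_n+\|u_0-x_0\|)\|e_n\|+|C|\,M\,h_n^{p+1}.
\end{equation*}

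Then I would run the continuation argument. Variation of constants together with the exponential bound gives $\|e_n\|\le K_0\sum_{k=0}^{n-1}e^{-\nu(t_n-t_{k+1})}\|\rho_k\|$; using $\sum_{k=0}^{n-1}e^{-\nu(t_n-t_{k+1})}h_k\le e^{\nu\|h\|_\infty}/\nu=:\kappa$ and $h_k^{p+1}\le\|h\|_\infty^p h_k$, and setting $\varepsilon_N:=\max_{0\le n\le N}\|e_n\|$, one gets $\varepsilon_N\le K_0\kappa(\tfrac12 L_2\varepsilon_N^2+C_3(\|h\|_\infty+\|u_0-x_0\|)\varepsilon_N+|C|M\|h\|_\infty^p)$. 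Since $e_0=0$, an induction on the horizon $N$ shows $\varepsilon_N\le\theta$ for all $N$, where $\theta:=4K_0\kappa|C|M\|h\|_\infty^p=\mathcal{O}(\|h\|_\infty^p)$, provided $\|h\|_\infty<h^*$ and $\|u_0-x_0\|<\delta$ are small enough that $K_0\kappa C_3(\|h\|_\infty+\delta)\le\tfrac14$, $K_0\kappa L_2\theta\le\tfrac14$, and the numerical orbit stays in $\mathcal{N}$ (which holds because $\|u_n-x(t_n;x_0,t_0)\|\le\|e_n\|+\|x_n-x(t_n;x_0,t_0)\|\le\theta+\mathcal{O}(\delta)$). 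This gives $\sup_{n\ge0}\|u_n-x_n\|\le\theta$, and with $h^*$ and $\delta$ fixed as above and the bounded constants absorbed, $\sup_{n\ge0}\|u_n-x_n\|\le D\|h\|_\infty^p$.

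The main obstacle has two parts, both resolved by the linear theory and a robustness argument. First, one needs an exponential bound on the numerical variational propagator with constants uniform in $n$ and independent of $u_0$; I obtain it by reducing everything to the fixed reference equation \eqref{eq:linvareq} and invoking Corollary \ref{cor:maincor}, pushing the $u_0$-dependent discrepancy into the forcing $\rho_n$. Second, the continuation argument must be handled with care because $\sup_n\|e_n\|$ is only a priori finite on finite horizons; it closes precisely because \emph{every} term of $\rho_n$ — including the second-order Taylor remainder $r_n$ — carries a factor $h_n$ (a one-step map being the identity plus $\mathcal{O}(h)$), so that the discrete convolution $\sum_k e^{-\nu(t_n-t_{k+1})}\|\rho_k\|$ is bounded uniformly in $n$ instead of growing like $1/\|h\|_\infty$.
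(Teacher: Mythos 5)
Your proof is correct and it reaches the same error recursion $e_{n+1}=\Phi^A(n;h)e_n+\rho_n$, but by a genuinely different route, and the difference is worth noting. The paper derives the recursion from the \emph{continuous} variation-of-constants representation \eqref{eq:nonlin.1} of the exact flow: it writes $x(t_{n+1};u_n,t_n)-x(t_{n+1};x_n,t_n)=X(t_{n+1})X(t_n)^{-1}(u_n-x_n)+\text{(quadratic integral remainder)}$, adds the local truncation error, and then replaces the \emph{exact} propagator $X(t_{n+1})X(t_n)^{-1}$ of the reference variational equation by $\Phi^A(n;h)$ with an $\mathcal{O}(h_n^{p+1})$ discrepancy coming from Theorem \ref{thm:maintheorem1}. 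You instead Taylor-expand the \emph{numerical} map $\varphi(\cdot,t_n;f,h_n)$ about $x_n$ to get the discrete variational coefficient $\partial_x\varphi(x_n,t_n;f,h_n)$, and then relate it to $\Phi^A(n;h)$ via (i) the consistency-level identity $\partial_x\varphi(x_n,t_n;f,h_n)=I+h_n Df(x_n,t_n)+\mathcal{O}(h_n^2)$, (ii) Lipschitz continuity of $Df$ along nearby trajectories, and (iii) continuous dependence of the one-step map on the coefficient matrix. Your route buys a cleaner and more explicit quadratic remainder ($\|r_n\|\le\tfrac12 L_2 h_n\|e_n\|^2$, visibly carrying the crucial factor $h_n$), and a tighter discrete convolution bound ($\sum_k e^{-\nu(t_n-t_{k+1})}h_k\le e^{\nu\|h\|_\infty}/\nu$, uniform in $\inf_n h_n$, which is more robust than the paper's appeal to $\text{inf}_{n\ge0}h_n>0$ and $(1-e^{-\gamma h_{\min}})^{-1}$); the price is that you only match $\partial_x\varphi$ to $\Phi^A$ at $\mathcal{O}(h_n^2)$ rather than $\mathcal{O}(h_n^{p+1})$, but you correctly observe this costs nothing since that discrepancy is multiplied by $\|e_n\|$ and is harmlessly absorbed by the Gronwall constant. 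Two small caveats, both shared with the paper rather than specific to you: the final line ``bounded constants absorbed'' tacitly requires $D$ to exceed a problem-dependent constant $\sim 2K_0\kappa|C|M$ (the theorem as phrased, ``given $D>0$,'' cannot hold for arbitrarily small $D$ because the per-step truncation contribution $|C|Mh_n^{p+1}$ alone accumulates to a nonzero multiple of $\|h\|_\infty^p$); and the induction on the horizon $N$ should be spelled out as you sketch (show $\varepsilon_N\le\theta\Rightarrow\|e_{N+1}\|\le\tfrac34\theta$), but as written the bootstrap closes.
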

\begin{proof}
By \eqref{eq:nonlin.1} and the assumption on the form of the local truncation error, there exists $h_1^* > 0$ so that if $\|h\|_{\infty} \in (0, h_1^*)$, then we have for some $\xi(h_n) \in (t_n,t_{n+1})$:
\begin{multline}\label{eq:nonlin1.1}
u_{n+1} -x_{n+1} = X(t_{n+1})X(t_n)^{-1}(u_n-x_n)\\
 + \int_{t_n}^{t_{n+1}}X(t_{n+1})X(\tau)^{-1}R(u_n,t_n,\tau)d\tau + C x^{(p+1)}(\xi(h_n);u_n,t_n)h_n^{p+1}
\end{multline}

Theorem \ref{thm:maintheorem1} implies that there exists $h_2^*, K, E^A > 0$ and $\gamma \in (0,\alpha)$ so that if $\|h\|_{\infty} < h_2^*$, then $X(t_{n+1})X(t_n)^{-1} = \Phi^A(n;h) + E_n^A h_n^{p+1}$ where $\text{sup}_{n \geq 0}\|E_n^A\| \leq E^A$ and so that if $n \geq m$, then $\|\prod_{k=n}^{m}\Phi^A(n;h)\| \leq Ke^{-\gamma(t_n-t_m)}$.  

If $y_n : = u_n - x_n$, then $\{y_n\}_{n=0}^{n}$ satisfies a difference equation of the form $y_{n+1} = a_n y_n + b_n$ where $a_n = \Phi^A(n;h)$ and $b_n$ is defined as the remainder of the right-hand side of \eqref{eq:nonlin1.1}.  The discrete variation of parameters formula implies that if $n > 0$, then
\begin{equation}\label{eq:nonlin1.2}
y_n = \left[\prod_{k=n-1}^{0}a_k\right]y_0 + \sum_{i=0}^{n-1}\left[\prod_{k=n-1}^{i+1}a_k\right] b_i.
\end{equation}
Let $K_M > 0$ be such that $\|X(t_{n+1})X(\tau)^{-1}\| \leq K_M$ for all $\tau \in [t_n,t_{n+1}]$ and $n \geq 0$.  Assume $D > 0$ be given and let $h^* \in (0,h_2^*]$ be such that $D(h^*)^{p} < \text{min}\{\delta_M,1\}$ and 
\begin{equation}\label{eq:nonlin1.3}
\left(E^A D (h^*)^{p} + K_M M  + |C|M (h^*)^p \right)h^* < \frac{ D(1-e^{-\gamma h_{\text{min}}})}{2(K+1)e^{\gamma h_{\text{min}}}}.
\end{equation}

For each sequence of step-sizes with $\|h\|_{\infty} < h^*$ we let $\delta := \text{min}\{\delta_M, \frac{D\|h\|_{\infty}^p}{2(K+1)}\}$.  We now complete the proof with an induction on $n$.  The base case $n=0$ is satisfied since $\|y_0\| < \delta \leq D\|h\|_{\infty}^p$.  Now assume that $\|y_i\| \leq D \|h\|_{\infty}^p$ for $i=0,\hdots,n-1$.  Then $D\|h\|_{\infty}^{p} < \text{min}\{\delta_M,1\}$ implies that $\|y_i\|^2 \leq\|y_i\| < \delta_M$ for $i=0,\hdots,n-1$.  Using \eqref{eq:nonlin1.3} we can bound $b_i$ as
$$\|b_i\| \leq \left(E^A  \|h\|_{\infty}^{p}\|y_i\|+ K_M M +|C|M\|h\|_{\infty}^{p}\right)\|h\|_{\infty}, \quad i=0,\hdots,n-1$$
and therefore by \eqref{eq:nonlin1.3} and the induction hypothesis, for $i=0,\hdots,n-1$ we have
$$\|b_i\| \leq \left(E^A D\|h\|_{\infty}^{2p} + K_M M  + |C|M\|h\|_{\infty}^p  \right)\|h\|_{\infty} < \frac{ D\|h\|_{\infty}^p(1-e^{-\gamma h_{\text{min}}})}{2(K+1)e^{\gamma h_{\text{min}}}}$$
Then $\|y_0\| < \delta \leq D\|h\|_{\infty}^p/(2(K+1))$, inequality \eqref{eq:nonlin1.2}, and $\text{inf}_{n \geq 0}h_n > 0$ imply that  
$$\|y_n\| \leq K\|y_0\| + D\|h\|_{\infty}^p/2 \leq D\|h\|_{\infty}^p.$$\qed

\end{proof}

\begin{theorem}\label{thm:nonlin2}  Assume that $f \in C^{k}(\mathbb{R}^d \times (t_0,\infty),\mathbb{R}^{d})$. Let $u_n :=u(n;u_0,t_0,h)$ and $w_n :=u(n;x_0,t_0,h)$ denote the numerical approximations of $x(t;u_0,t_0)$ and $x(t;x_0,t_0)$ generated by the method $\mathcal{M}$ with the sequence of step-sizes $h=\{h_n\}_{n=0}^{\infty}$.  Given any $\gamma \in (0,\alpha)$, there exists $h^* > 0$, $K > 0$, and $\delta > 0$ so that if $h$ is a sequence of step-sizes with $\|h\|_{\infty} \in (0, h^*)$ and $n \geq m \geq 0$, then $\|u_n-w_n\| \leq K e^{-\gamma(t_n-t_m)}\|u_m-w_m\|$.
\end{theorem}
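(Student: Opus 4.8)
The plan is to study the difference $e_n := u_n-w_n$. Since $u_n$ and $w_n$ are both produced by the one-step recursion \eqref{eq:onestep}, $e_n$ obeys a nonautonomous linear difference equation $e_{n+1}=M_n e_n$ whose coefficients are a small perturbation of the numerical transition matrices $\Phi^A(n;h)$ of the linear variational equation $\dot u=A(t)u$, $A(t)=Df(x(t;x_0,t_0),t)$, and I would transfer uniform exponential stability from the discrete linear system to the perturbed one by a discrete variation-of-parameters / Gronwall argument in the spirit of the proof of Theorem~\ref{thm:nonlin1}. (As in Theorem~\ref{thm:nonlin1}, the estimate is to be read under $\|u_0-x_0\|<\delta$; this is where the constant $\delta$ enters.)

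First I would fix $\gamma_1\in(\gamma,\alpha)$. Because $x(t;x_0,t_0)$ is bounded and $f\in C^k$, the variational equation satisfies Assumption~\ref{as:assumption1} and, by hypothesis, has Sacker--Sell spectrum with right endpoint $-\alpha$; so, exactly as in the proof of Theorem~\ref{thm:nonlin1}, Theorem~\ref{thm:maintheorem1} (applied with $\varepsilon$ small enough that the discrete Sacker--Sell spectrum lies below $-\gamma_1$) yields $h_1^*>0$ and $K_1\ge 1$ with $\|\Phi^A(n-1;h)\cdots\Phi^A(m;h)\|\le K_1 e^{-\gamma_1(t_n-t_m)}$ for all $n\ge m\ge 0$ and all step sequences with $\|h\|_\infty\in(0,h_1^*)$; shrinking $h_1^*$ if necessary, Theorem~\ref{thm:nonlin1} applied with perturbation $u_0=x_0$ also gives $\sup_{n\ge 0}\|w_n-x(t_n;x_0,t_0)\|\le\|h\|_\infty^{p}$, and the identity $X(t_{n+1},t_n)=\Phi^A(n;h)+\mathcal{O}(h_n^{p+1})$ from that proof is available.

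Writing $J_n(y)$ for the Jacobian of $\varphi(\cdot,t_n;f,h)$ at $y$, the fundamental theorem of calculus gives $M_n=\int_0^1 J_n(w_n+se_n)\,ds$. I would then bound $M_n-\Phi^A(n;h)$ using: (i) differentiating the order-$p$ consistency relation $\varphi(y,t_n;f,h)=x(t_{n+1};y,t_n)+\mathcal{O}(h_n^{p+1})$ in $y$, which gives $J_n(x(t_n;x_0,t_0))=X(t_{n+1},t_n)+\mathcal{O}(h_n^{p+1})=\Phi^A(n;h)+\mathcal{O}(h_n^{p+1})$; (ii) the regularity of the one-step map, by which $\|J_n(y_1)-J_n(y_2)\|\le L_0 h_n\|y_1-y_2\|$ on a fixed neighborhood of the bounded trajectory for $\|h\|_\infty$ small; and (iii) $\|w_n-x(t_n;x_0,t_0)\|\le\|h\|_\infty^{p}$. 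Combining these, as long as $\|e_n\|$ stays bounded by a fixed constant, $M_n=\Phi^A(n;h)+G_n$ with $\|G_n\|\le c\,h_n(\|h\|_\infty^{p}+\|e_n\|)$. Then the discrete variation-of-parameters formula (cf.\ \eqref{eq:nonlin1.2}) for $e_{n+1}=\Phi^A(n;h)e_n+G_n e_n$, combined with the linear bound, gives
\[
\|e_n\|\le K_1 e^{-\gamma_1(t_n-t_m)}\|e_m\|+\sum_{j=m}^{n-1}K_1 e^{-\gamma_1(t_n-t_{j+1})}\|G_j\|\,\|e_j\|,\qquad n\ge m .
\]
Setting $g_n:=e^{\gamma_1(t_n-t_m)}\|e_n\|$ and, once a uniform a priori bound $R$ on $\sup_j\|e_j\|$ is known, $\|G_j\|\le\rho h_j$ with $\rho:=c(\|h\|_\infty^{p}+R)$, this reduces to $g_n\le K_1\|e_m\|+C\rho\sum_{j=m}^{n-1}h_j g_j$ with $C:=K_1 e^{\gamma_1\|h\|_\infty}$; the discrete Gronwall inequality then gives $\|e_n\|\le K_1 e^{-(\gamma_1-C\rho)(t_n-t_m)}\|e_m\|$. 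The bound $R$ comes from a standard continuation argument applied to the $m=0$ case: on the largest initial segment on which $\|e_j\|\le 2K_1\delta$ one has $\|G_j\|\le\rho h_j$ (with $\rho$ as small as desired), so the estimate forces $\|e_n\|\le K_1\delta<2K_1\delta$ there, hence the segment is all of $\mathbb{N}$ and $R=2K_1\delta$ works. Choosing $h^*\in(0,h_1^*]$ and $\delta>0$ small enough that $C\rho\le\gamma_1-\gamma$ (which also keeps $\gamma_1-C\rho>0$ and $2K_1\delta$ within the neighborhood needed in (ii)--(iii)) then gives $\|u_n-w_n\|\le K_1 e^{-\gamma(t_n-t_m)}\|u_m-w_m\|$ for all $n\ge m\ge 0$, i.e. the claim with $K:=K_1$.

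The step I expect to be the main obstacle is the non-decaying term of order $h_n\|h\|_\infty^{p}$ in $G_n$: it is not controlled by $\|e_n\|$ but reflects the fact that $w_n$ is only an $\mathcal{O}(\|h\|_\infty^{p})$-accurate surrogate for the exact trajectory carrying the linearization, and over long horizons it produces a factor $\exp(\mathcal{O}(\|h\|_\infty^{p})(t_n-t_m))$. Consequently the target rate $\gamma$ cannot be transferred directly from the discrete linear system; the remedy --- extracting a strictly larger rate $\gamma_1\in(\gamma,\alpha)$ from Theorem~\ref{thm:maintheorem1} and then absorbing the $\mathcal{O}(\|h\|_\infty^{p})$ loss --- is essential, and it forces $h^*$ to shrink as $\gamma\uparrow\alpha$. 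The residual $\mathcal{O}(h_n\|e_n\|)$ term is routine once the continuation argument supplies the uniform smallness of $\|e_n\|$.
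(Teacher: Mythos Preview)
Your proposal is correct and follows the same overall strategy as the paper --- linearize the difference $e_n=u_n-w_n$ around the variational equation along $x(t;x_0,t_0)$, extract uniform exponential decay from Theorem~\ref{thm:maintheorem1}, and close with discrete variation of parameters plus Gronwall --- but the decomposition you use is different from the paper's, and worth comparing.

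The paper subtracts two instances of the representation \eqref{eq:nonlin1.1} (one for $u_n$, one for $w_n$), so that the linear part is the \emph{exact} transition matrix $X(t_{n+1})X(t_n)^{-1}$ and the perturbation consists of the differences $R(u_n,\cdot)-R(w_n,\cdot)$ of the quadratic Taylor remainder together with the difference of the Lagrange-form local truncation errors $Cx^{(p+1)}(\xi;\cdot)h_n^{p+1}$; it then invokes the bounds \eqref{eq:derivativebounds} (in particular on $\partial R/\partial u$) and Theorem~\ref{thm:nonlin1} to make both differences Lipschitz in $\|e_n\|$, and finishes with Gronwall. Your route instead writes $e_{n+1}=M_n e_n$ with $M_n=\int_0^1 J_n(w_n+se_n)\,ds$ the averaged Jacobian of the one-step map, and compares $M_n$ directly to $\Phi^A(n;h)$ through the chain $M_n\to J_n(w_n)\to J_n(x(t_n))\to X(t_{n+1},t_n)\to\Phi^A(n;h)$. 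This buys you two things: you avoid the assumed Lagrange form of the local error (which the paper needs to make the truncation-error difference Lipschitz), and the continuation argument supplying the a~priori bound $\|e_j\|\le 2K_1\delta$ is made explicit rather than hidden inside the appeal to Theorem~\ref{thm:nonlin1}. Conversely, the paper's route stays closer to the flow and reuses \eqref{eq:nonlin1.1} verbatim, so no new regularity of $\varphi$ needs to be stated. Your observation that one must pass through an intermediate rate $\gamma_1\in(\gamma,\alpha)$ to absorb the non-decaying $h_n\|h\|_\infty^{p}$ contribution is correct and is implicit in the paper's sketch as well.
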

\begin{proof}
As in the derivation of Equation \eqref{eq:nonlin1.1}, there exists $h_1^*  > 0$ so that if $\|h\|_{\infty} \in (0 ,h^*)$, then
\begin{multline}\label{eq:nonlin2.1}
u_{n+1} -w_{n+1} = X(t_{n+1})X(t_n)^{-1}(u_n-w_n) \\
+ \int_{t_n}^{t_{n+1}}X(t_{n+1})X(\tau)^{-1}(R(u_n,t_0,\tau)-R(w_n,t_0,\tau))d\tau +\\
 C( (x^{(p+1)}(\xi(u_n,t_n,h_n);u_n,t_n)-x^{(p+1)}(\xi(w_n,t_n,h_h);u_n,t_n))h_n^{p+1}.
\end{multline}
The result then follows by combining Theorem \ref{thm:nonlin2} with the bounds \eqref{eq:derivativebounds} and the discrete Gronwall inequality.\qed

\end{proof}


\section{Applications}\label{sec:numexp}

In this section we apply the theoretical results from Sections \ref{sec:mainresults} and \ref{sec:nonlin} to develop a time-dependent stiffness indicator and a one-step method that switches between implicit and explicit Runge-Kutta methods.  We denote Runge-Kutta methods by RK($\nu$-$p$-$\hat{p}$) where RK is an identifying string, $\nu$ is the number of stages, $p$ is the order of the method, and $\hat{p}$ is the order of the embedded method.  We give Butcher tableaux and discuss the accuracy and stability properties of the methods we use in the Appendix.

All the experiments in this section were conducted using a solver $odeqr$ implemented in MATLAB. This solver forms an approximate solution using a Runge-Kutta method with the capability of switching between different methods at each step.  In $odeqr$ the step-size is either constant or adaptive where an initial step-size guess is reduced by increments of $25\%$ until a tolerance is satisfied.  For an implicit method $odeqr$ solves the nonlinear stage value equations using Newton's method with an option for using exact and inexact Jacobians using the previous solution step as initial guess and an error tolerance of $10^{-12}$.

\subsection{Test ODEs}\label{sec:testode}

In this section we discuss the four ODEs used in our experiments in Sections \ref{sec:stiffdetect} and \ref{sec:qrimex}.  The first ODE we consider is Equation \eqref{eq:2dlin} with $\lambda_1 = 0.1$, $\lambda_2=-0.2$, $\beta_0 = 1000.0$, $\beta_1 = 0.001$, $a_1=a_2 = 2\pi$, and initial condition $x(0) = (1,-1)^T$.

 The second ODE we consider is the so-called "compost bomb instability".  We use the form of the equations appearing on page 1245 of \cite{WALC2011}:
\begin{equation}\label{eq:compbomb}
\left\{ \begin{array}{lcr}
\dot{T} = Cre^{\alpha T} - \frac{\lambda}{A}(T-T_a) \\
\dot{C} = \Pi - Cr \exp(\alpha T)\\
\dot{T}_a = \nu
\end{array}
\right.
\end{equation}
The parameters represent various physically and biologically relevant quantities with $r =0.01$, $\alpha =\ln(2.5)/10$, $\lambda = 5.049 \times 10^6$, $A = 3.9\times 10^7$, $\Pi = 1.055$, and $\varepsilon = 0.064$.  As in \cite{WALC2011} we use a sixth order approximation to  $e^{\alpha T}$.  We use the initial condition $T(0)=8.15$, $C(0) = 50.0$, $T_a(0) = 0.0$ and parameter values $\nu = 0.09$ and $\nu = 0.3$.  For these parameters and this initial condition the solution of \eqref{eq:compbomb} exhibits single- and double-spike excitable responses which generate time-intervals over which the solution is very stiff \cite{WALC2011}.  

The third equation we consider is the forced Van der Pol equation \cite{VDP} expressed as a first order ODE in two dimensional phase space:

\begin{equation}\label{eq:navdp2d}
\left\{ \begin{array}{lcr}
\dot{x_1}=  \mu(1-x_1^2)x_2 + x_1 - A \sin(\omega t)\\
\dot{x_2} = x_1
\end{array}
\right.
\end{equation}
We use the initial condition $(x_1(0), x_2(0))^T = (0,2)^T$, $\mu=100$, and $\omega=A=1$.
For our fourth example we first consider (see \cite{Fitzhugh1961, NagumoAY1964, KSS1997}) the one-dimensional Fitzhugh-Nagumo partial differential equation (PDE): 
\begin{equation}\label{eq:nagumopde}
\left\{ \begin{array}{lcr}
u_t = \phi(u) - v + \alpha \frac{\partial^2 u}{\partial x^2} \\
v_t = \varepsilon (u-\delta v) 
\end{array} 
\right., \quad u=u(x,t), v=v(x,t) \in \mathbb{R}, \quad t> 0, x\in (0,1)
\end{equation}
with Neumann-type boundary conditions $u_x(0,t) = 0 = u_x(1,t)$ and $v_x(0,t) = v_x(1,t)$ and with $\phi$ given by $\phi(r) = -2r^3+6r$ and $\delta, \alpha,\varepsilon >0$.  We construct a system of ODEs by taking a uniform spatial discretization of \eqref{eq:nagumopde} with $x_j = j/J \equiv j \Delta x$ for $j=0,\hdots,J$ and the following finite difference approximation to $\Delta u(x_j,t)$ which takes into account the boundary conditions:
$$\Delta u(x_j,t) \approx D(u_j) := \left\{ \begin{array}{cc}
(u_{1}-u_{0})/(\Delta x)^2, & j=0 \\
(u_{J-1}-u_J)/(\Delta x)^2, &  j=J    \\
(u_{j+1}-u_{j-1} - 2u_j)/(\Delta x)^2, & \text{otherwise}
\end{array}
\right.$$
where $u_j(t) \approx u(x_j,t)$ for $j=0,\hdots,J$ and $\Delta x = 1/J$.  This leads to our fourth ODE which is the following $(2J+2)$-dimensional Fitzhugh-Nagumo system:
\begin{equation}\label{eq:nagumoode}
\left\{ \begin{array}{lcr}
\dot{u}_j = \phi(u_i) - v_i + \alpha D(u_j)\\
\dot{v}_j = \varepsilon(u_j - \delta v_j) 
\end{array}
\right., \quad j=0,\hdots,J
\end{equation}
For parameter values we take $\varepsilon = 0.1$, $\alpha =0.3$, $\delta = 0.01$, and $J=14$ and for the initial condition we use $u_j(0) = \sin(0.5\pi j\Delta x)$ and $v_j(0) = \cos(0.5\pi j\Delta x)$ for $j=0,\hdots,J$.

\subsection{Nonautonomous stiffness detection}\label{sec:stiffdetect}

In this section we develop a method for stiffness detection based on approximating Steklov averages as defined in Equation \eqref{eq:steklovavg}.  Assume that \eqref{eq:lineq} satisfies Assumption \ref{as:assumption2}.  The conclusion of Lemma \ref{lem:localgrowthcts} implies that the Steklov averages \eqref{eq:steklovavg} of a corresponding upper triangular ODE measure average exponential growth/decay rates of solutions of \eqref{eq:lineq} on the interval $[t,t+\Delta t]$.  For a randomly chosen orthogonal $Q(t_0) = Q_0 \in \mathbb{R}^{d\times d}$ the Steklov averages of the corresponding upper triangular ODE $\dot{y} = B(t)y$ with initial orthogonal factor $Q_0$ tend to order themselves so that $s_1(t,\Delta t)$ corresponds to the right-most spectral interval and $s_d(t,\Delta t)$ corresponds to the left-most spectral interval \cite{DVV95}.  This motivates using the following as a stiffness indicator:
\begin{equation}\label{eq:QRstiffind}
S(t,\Delta t) = s_1(t,\Delta t)-s_d(t,\Delta t).
\end{equation}
If $S(t,\Delta t)$ is large in absolute value, then we expect that the problem is stiff and if $S(t,\Delta t)$ is near zero, then we expect that the problem is nonstiff.  We remark that in general $s_1(t,\Delta t) > s_d(t,\Delta t)$ holds on average, but does not hold point-wise; for sufficiently large $\Delta t$ the quantities $s_1(t,\Delta t)$ and $s_d(t,\Delta t)$ become approximations to respectively the right and left end-points of the Lyapunov and Sacker-Sell spectra.

We now discuss how to approximate $S(t,\Delta t)$ along a sequence of time-steps $h=\{h_n\}_{n=0}^{\infty}$.  Consider the numerical solution $x_{n+1} = \Phi^A(n;h)x_n$ of \eqref{eq:lineq} using a one-step method $\mathcal{M}$ with local truncation error of order $p \geq 1$.  We first approximate $s_1(t_n,h_n)$ as follows.  Given an initial $q_{0}\in \mathbb{R}^d$ with $\|q_0\|_2 = 1$ ( $\|\cdot\|_2$ is the Euclidean 2-norm) we inductively form $v_{n}:=\Phi^A(n;h)q_{n}$ followed by normalization: $q_{n+1} := v_{n}/\|v_{n}\|_2$ and $R^A_{1,1}(n;h) :=\|v_{n}\|_2$.  We approximate $s_1(t_n,h_n)$ by $\sigma_1(n):=\ln(R^A_{1,1}(n;h))/h_n$ which is justified since Theorem \ref{thm:maintheorem2} implies that $s_1(t_n,h_n) = \sigma_1(n) + \mathcal{O}(\|h\|_{\infty}^{p})$ for sufficiently small $\|h\|_{\infty}$.  We approximate $s_d(t_n,h_n)$ by applying the same method used to approximate $s_1(t_n,h_n)$ to the adjoint equation $\dot{x} = -A(t)^T x$.  This is justified since the left end-points of the Lyapunov and Sacker-Sell spectra of the adjoint equation are the right end-points of the Lyapunov and Sacker-Sell spectra of \eqref{eq:lineq}.  

Our approximation of $S(t,\Delta t)$ along a sequence of time-steps $\{h_n\}_{n=0}^{\infty}$ using window length $w \geq 0$ and $n \geq w$ is defined as as 
$$SI(n,w) =\frac{1}{t_{n+w+1}-t_{n-w}}\sum_{k=0}^{2w}(\sigma_1(n-w+k)-\sigma_d(n-w+k))/h_{n-w+k}.$$
For IVPs of nonlinear ODEs we compute $SI(n,w)$ by forming $A(t) := Df(x(t;x_0,t_0),t)$ at the approximate solution values.

For an implicit method, forming $\Phi^A(n;h)$ exactly requires solving a linear system of equations.  To avoid this we instead form $\tilde{\Phi}^A(n;h) = \Phi^A(n;h) + \mathcal{O}(\|h\|_{\infty}^{q})$ where $q = \text{min}\{p+1,3\}$ at each implicit time-step where $\tilde{\Phi}^A(n;h)$ is formed by applying the explicit 2nd order method HEU(2-2-1) to $\dot{x} = A(t)x$.  The structure of HEU(2-2-1) also allows us to avoid approximating $A(t)$ with stage values whose order of approximation is much lower than the order of the method.  

In addition to our Steklov average based method we implement the stiffness indicator, denoted as $\sigma[A(t)]$, that was introduced in Definition 4.1 of \cite{GSC2015} that is formulated in terms of the logarithmic norm of the Hermitian part $\text{He}[A(t)] : = (A(t)+A(t)^T)/2$ of $A(t)$. To simplify the computation of $\sigma[A(t)]$ we assume that we are using the Euclidean 2-norm.  As noted in \cite{GSC2015} this implies that $\sigma[A(t)]$ equals the smallest eigenvalue of $\text{He}[A(t)]$ subtracted from the largest eigenvalue of $\text{He}[A(t)]$.  

In general we cannot expect any relation between $SI(n,w)$ and $\sigma[A(t_n))]$ as exemplified in Figure \ref{fig:stiffind_2dlin}.  However, we can characterize when these two indicators should be close to equal.  Assume $w = 0$ and note that for any bounded and continuous $A(t)$ we have $X(t_{n+1},t_n) = e^{\int_{t_n}^{t_{n+1}}A(\tau)d\tau} + \mathcal{O}(\|h\|_{\infty}^2)$ for all sufficiently small $\|h\|_{\infty}$ where $X(t;t_n)$ is the solution of \eqref{eq:Atrans}.  Hence, if $X(t_{n+1};t_n)$ is well-conditioned for eigenvalue computations (such as when $A(t_n)$ is normal and $h_n$ is small), then the logarithms of the real parts of the eigenvalues of $\Phi^A(n;h)$ divided by $h_n$ should be approximately equal to the average of the eigenvalues of $\text{He}[A(t)]$ for $t \in [t_n,t_{n+1}]$.  Forming $\sigma_1(n)$ and $\sigma_d(n)$ is equivalent to performing one step of power iteration to approximate the real parts of the eigenvalues of $\Phi^A(n;h)$ and the associated adjoint coefficient matrix followed by taking logarithms and division by $h_n$.  If the largest (in terms of absolute value) eigenvalue of $He[A(t)]$ is significantly larger than the next, then power iteration converges rapidly implying that a single step of power iteration applied to $\text{He}[A(t_n)]$ should be approximately the logarithm of a single step of power iteration applied to $\Phi^A(n;h)$ divided by $h_n$.  The same statement holds for the adjoint coefficient matrix and the smallest eigenvalue of $He[A(t)]$.  It follows that $SI(n,w)$ and $\sigma[A(t_n))]$ should be close when $\|h\|_{\infty}$ is small, $w\approx 0$, $A(t)A(t)^T-A(t)^TA(t) \approx 0$, and the largest eigenvalues of $A(t_n)$ and $-A(t_n)^T$  dominate over the next largest.  

We now highlight the advantages of computing $SI(n,w)$ over $\sigma[A(t_n)]$.  We first note that approximating $\sigma[A(t_n)]$ is norm dependent \cite{GSC2015} while $SI(n,w)$ is not.  Accurately approximating $SI(n,w)$ depends on integral separation which is expected to be strong in a stiff IVP and does not require that $A(t_n)$ or $\text{He}[A(t_n)]$ be normal or well-conditioned for eigenvalue computations.  For small $w$, forming $SI(n,w)$ is less expensive than $\sigma[A(t_n)]$, since forming $SI(n,w)$ essentially requires only a single step of power iteration applied to $\Phi^A(n;h)$ and the associated adjoint coefficient matrix followed by taking logarithms and a linear combination of $2w$ terms, whereas forming $\sigma[A(t_n)]$ requires at least one step of power iteration or some other method for approximating eigenvalues.  This cost advantage is important in the next section where fast and accurate approximations to $\sigma_1(n)$ and $\sigma_d(n)$ are needed at each step.

We compare the performance of $SI(n,w)$ with $\sigma[A(t_n)]$ with the linear ODE \eqref{eq:2dlin}, the compost bomb equation, and the forced Van der Pol equation in Figures \ref{fig:stiffind_2dlin}, \ref{fig:stiffind_cpb9em2}, \ref{fig:stiffind_cpb3em1}, and \ref{fig:stiffind_vdp}.  Figures \ref{fig:stiffind_cpb9em2}, \ref{fig:stiffind_cpb3em1}, and \ref{fig:stiffind_vdp} show that $SI(n,w)$ and $\sigma[A(t_n)]$ produce qualitatively similar results when applied to the compost bomb and Van der Pol equations.  However, as evidenced in Figures \ref{fig:stiffind_2dlin}, \ref{fig:stiffind_cpb9em2}, and \ref{fig:stiffind_cpb3em1}, our approximation to $SI(n,w)$ is more sensitive to changes in the step-size even over intervals where the solution is nonstiff.  The 2D linear ODE \eqref{eq:2dlin} provides a clear example where the performance of $SI(n,w)$ is superior to that of $\sigma[A(t_n)]$, with $SI(n,w)$ detecting intervals over which the solver takes smaller and larger time-steps where $A(t)$ is respectively more or less non-normal, while $\sigma[A(t_n)]$ is approximately constant at all time-steps.  The window length $w$ of $SI(n,w)$ is chosen to produce smooth plots.  The values $|SI(n,w)|$ and $|\sigma[A(t_n)]|$ are plotted since it is absolute values rather than sign that indicate stiffness.

\begin{figure}

\centering
\includegraphics[width=\textwidth,height=8cm]{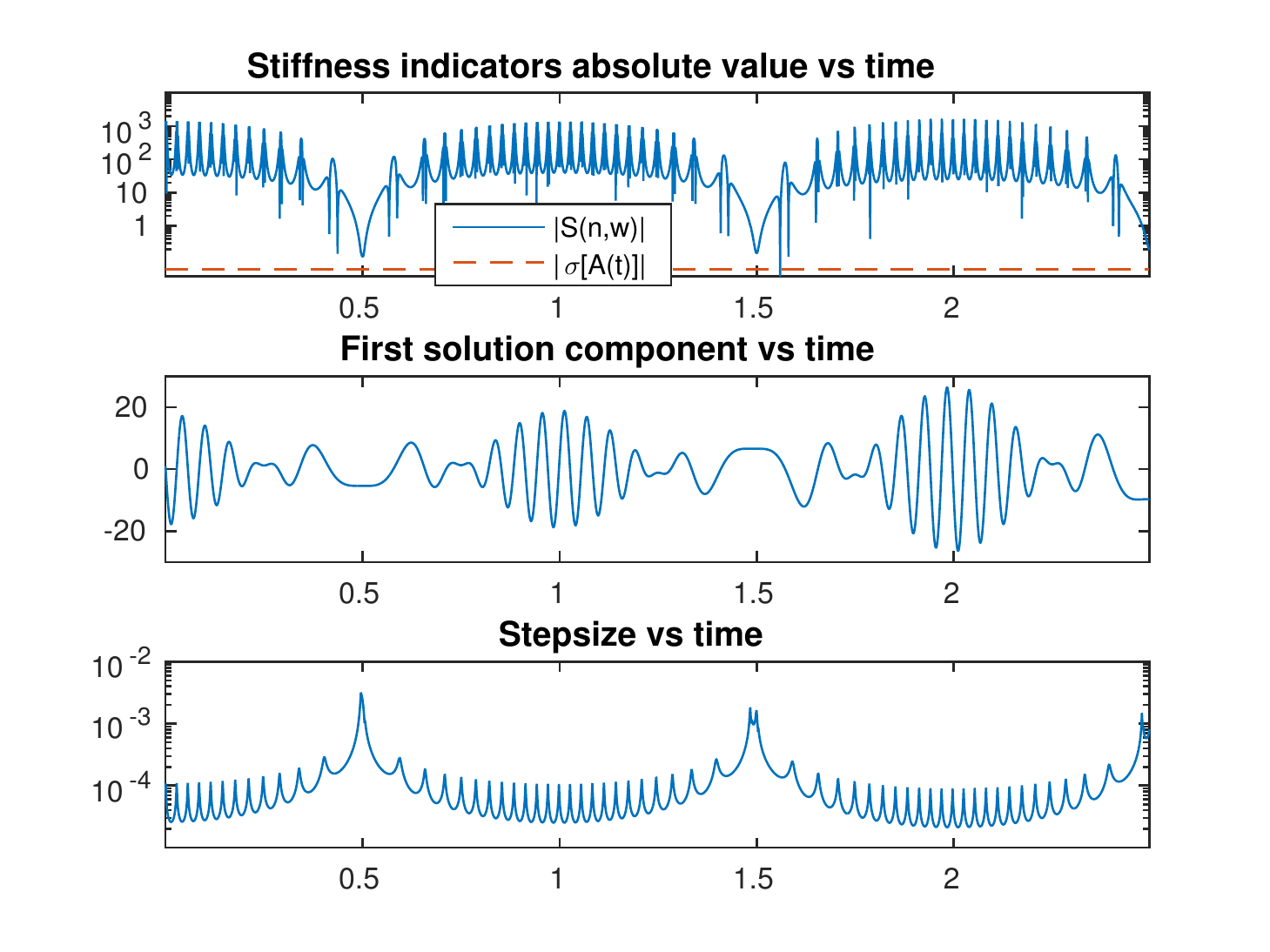}

\caption{Plots of approximate \ensuremath{|SI(n,w)|} with \ensuremath{w=1} and \ensuremath{|\sigma[A(t_n)]|}, first solution component, and step-size versus time for the numerical solution of the 2D linear ODE \protect\eqref{eq:2dlin} using the parameters specified in Section \ref{sec:testode} solved with HEU(2-1-2) using a relative and absolute error tolerance of \ensuremath{10^{-4}}.}
\label{fig:stiffind_2dlin}

\end{figure}

\begin{figure}
\centering
\includegraphics[width=\textwidth,height=8cm]{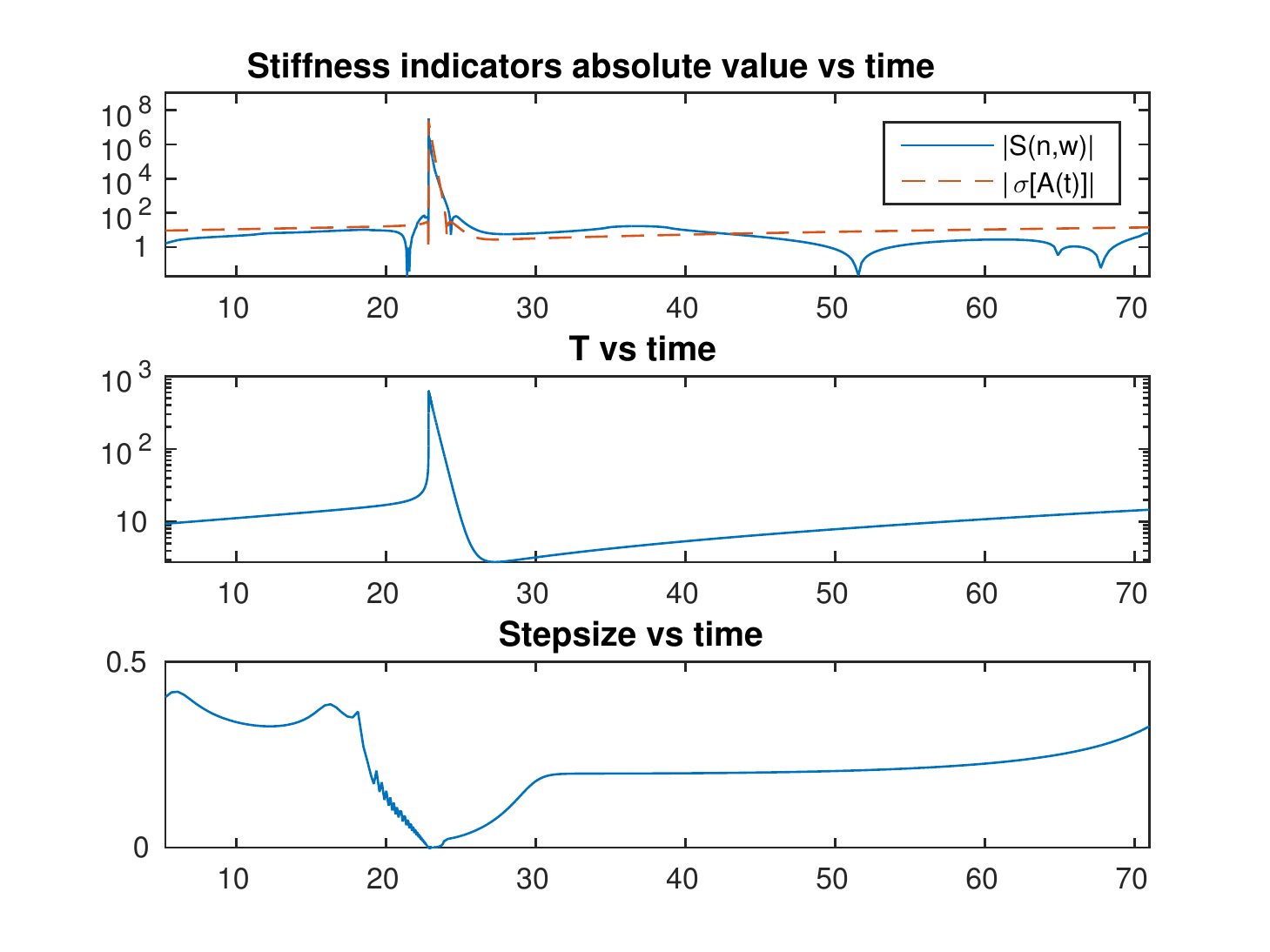}

\caption{Plots of approximate \ensuremath{|SI(n,w)|} with \ensuremath{w=20} and \ensuremath{|\sigma[A(t_n)]|}, first solution component \ensuremath{T}, and step-size versus time for the numerical solution of the compost bomb ODE \protect\eqref{eq:compbomb} using \ensuremath{\nu=0.09} solved with DP(7-5-4) using a relative and absolute error tolerance of \ensuremath{10^{-8}}.}
\label{fig:stiffind_cpb9em2}
\end{figure}

\begin{figure}
\centering

\includegraphics[width=\textwidth,height=8cm]{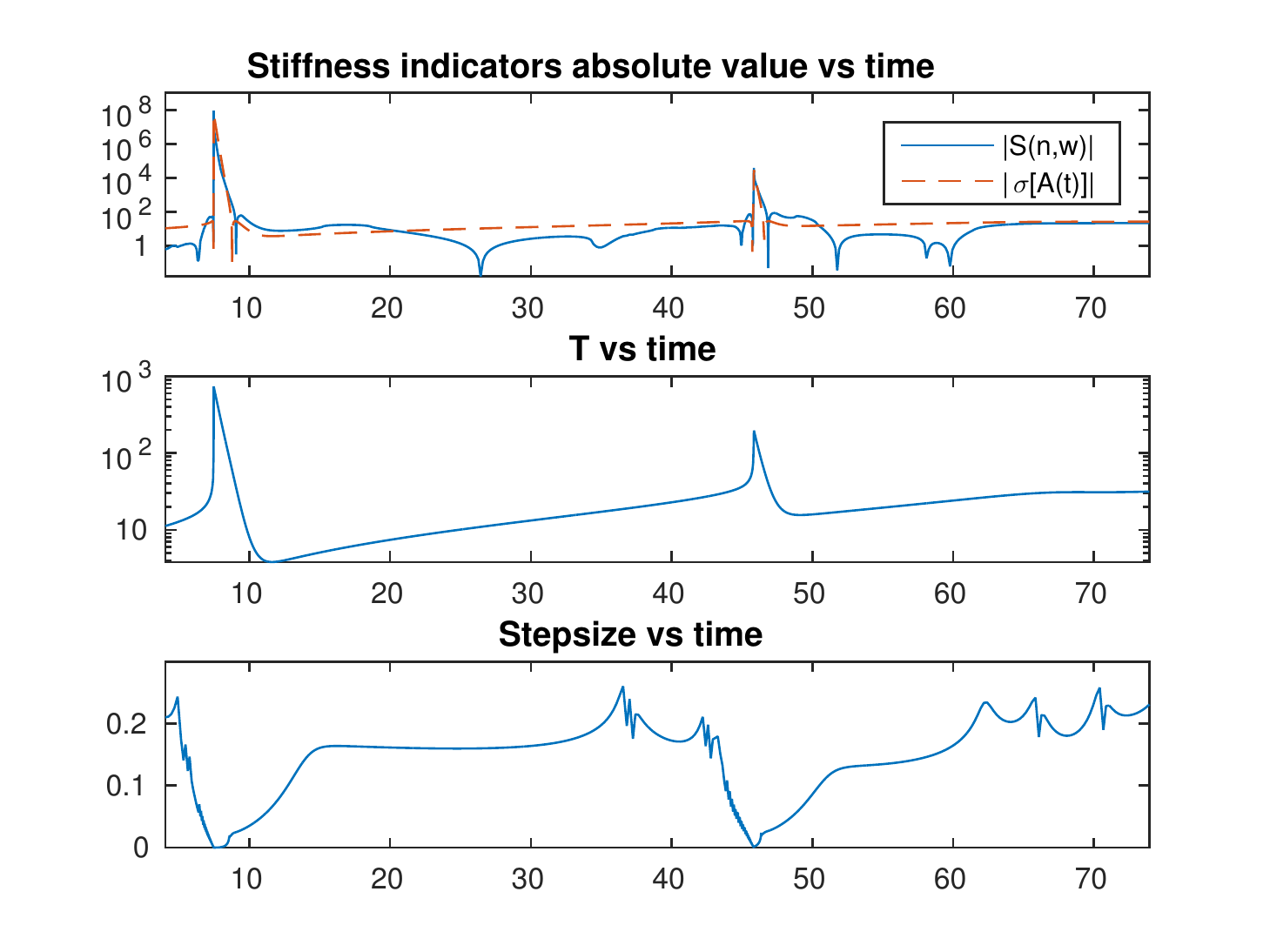}
\caption{Plots of approximate \ensuremath{|SI(n,w)|} with \ensuremath{w=20} and \ensuremath{|\sigma[A(t_n)]|}, first solution component \ensuremath{T}, and step-size versus time for the numerical solution of the compost bomb ODE \protect\eqref{eq:compbomb} using \ensuremath{\nu=0.30} solved with DP(7-5-4) using a relative and absolute error tolerance of \ensuremath{10^{-8}}.}
\label{fig:stiffind_cpb3em1}
\end{figure}

\begin{figure}
\centering

\includegraphics[width=\textwidth,height=8cm]{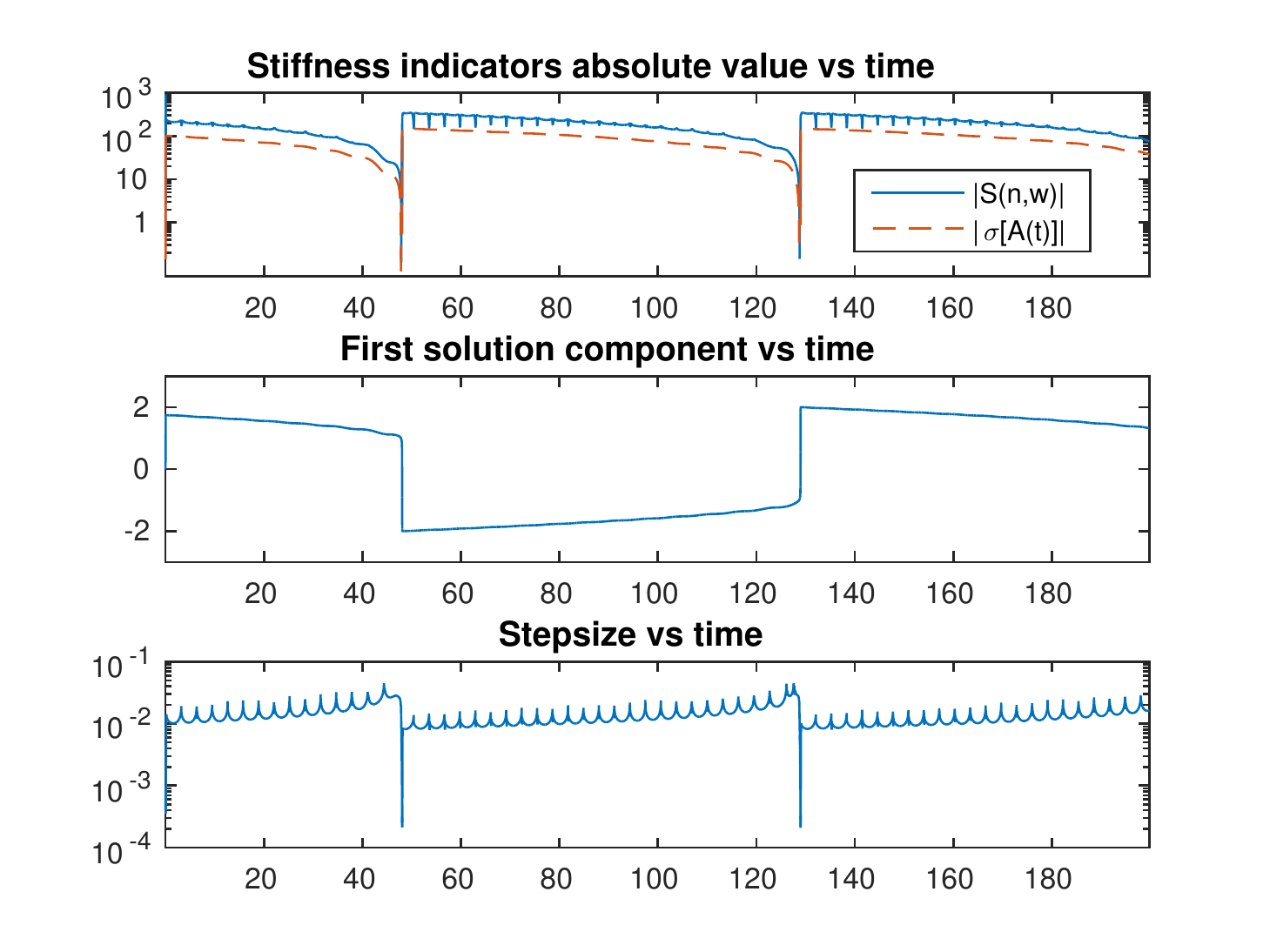}
\caption{Plots of approximate \ensuremath{|SI(n,w)|} with \ensuremath{w=10} and \ensuremath{|\sigma[A(t_n)]|}, first solution component, and step-size versus time for the numerical solution of the Van der Pol ODE \protect\eqref{eq:navdp2d} solved with DP(7-5-4) using a relative and absolute error tolerance of \ensuremath{10^{-8}}.}

\label{fig:stiffind_vdp}
\end{figure}


\subsection{QR implicit-explicit Runge-Kutta methods}\label{sec:qrimex}

 Consider an explicit Runge-Kutta method RKex($\nu$-$p$-$\hat{p}$) and an implicit Runge-Kutta method RKim($\hat{\nu}$-$p$-$\hat{p}$).  We construct a one-step method with local truncation error of order $p$, denoted as RKex($\nu$-$p$-$\hat{p}$)- RKim($\hat{\nu}$-$p$-$\hat{p}$)),  that switches between using the implicit and explicit Runge-Kutta methods as follows.  At time-step $t_n$ we form $\sigma_1(n)$ and $\sigma_2(n)$ as described in Section \ref{sec:stiffdetect}.  If $\sigma_1(n)$ is too small and negative or if $\sigma_2(n)$ is too large and positive, then we use the implicit scheme, otherwise we use the explicit scheme.  More precisely we use the explicit scheme if $\sigma_1(n) \leq d_1 H_{0} \text{ and } \sigma_2(n) \geq d_2 H_{0}$ where $d_1$ and $d_2$ are chosen according to the right and left endpoints of the linear stability domains of RKex($\nu$-$p$-$\hat{p}$) and RKim($\hat{\nu}$-$p$-$\hat{p}$) and the quantity $H_{0}$ is a parameter specifying the minimum allowable step-size restriction due to time-dependent stability that will be tolerated.  We refer to such implicit-explicit switching methods as QR-IMEX-RK methods and implement them with $odeqr$.

We approximate the parameter $H_{0}$ as follows.  Pick an interval over which the approximate solution is non-stiff.  Over this interval compute the mean step-size $h^m$ and set $H_0 = h^m \alpha$ where $\alpha > 0$ is a factor specifying the tolerance for how small the stability step-size restriction is relative to the mean non-stiff step-size. 

The compost bomb results (Table \ref{tab:imex_compbomb}) show that the QR-IMEX-RK method Mcpb3 has about the same mean step-size as the implicit method Mcpb2 for $\nu=0.09,0.30$ and $\text{TOL}=10^{-4},10^{-6},10^{-8}$ at a much lower cost in terms of function and Jacobian evaluations and linear solves.  The explicit method Mcpb1 must use a smaller time-step on average than Mcpb1 and Mcpb2 at all tested tolerances and at the tolerance of $10^{-4}$ the QR-IMEX-RK method Mcpb2 uses fewer function evaluations than the explicit method Mcpb1.  Parameter values $d_1$ and $d_2$ were chosen based on the stability regions of Mcpb1 and Mcpb2. 

We now discuss the results (Table \ref{tab:imex_nagpde}) of the discretized Fitzhugh-Nagumo PDE \eqref{eq:nagumoode}.  As the error tolerances are decreased the problem becomes stiffer leading to more uses of the implicit method by the QR-IMEX-RK methods.  The results in Table \ref{tab:imex_nagpde} show that at medium-low tolerances ($\text{TOL}=10^{-4},10^{-6}$) the explicit method Mfhn1 and the QR-IMEX-RK methods Mfhn2, Mfhn3, and Mfhn4 have about the same mean step-size and very few implicit steps are taken by the QR-IMEX-RK methods.  When tighter tolerances are used ($\text{TOL}=10^{-8},10^{-10}$) the problem is much stiffer and the QR-IMEX-RK solvers Mfhn2 and Mfhn3 whose implicit methods are L-stable are able to take larger time-steps on average than the explicit method Mfhn1 or the QR-IMEX-RK method Mfhn4 (which is not AN- or L-stable) at a cost of using more function and Jacobian calls than Mfhn1.  The implicit method of Mfhn4 is not AN- or L-stable and it never performs much better than the explicit method Mfhn1 in terms of average step-size or number of function evaluations.  This suggests that it is inadvisable to use an implicit method which is not AN- or L-stable as the implicit method in a QR-IMEX-RK method.  Notice that although Mfhn2 and Mfhn3 are able to take larger step-sizes on average than Mfhn1 for $\text{TOL}=10^{-8},10^{-10}$ the additional implicit time-steps cost more in terms of function and Jacobian evaluations, linear solves, and the overhead associated with forming $\sigma_1(n)$ and $\sigma_2(n)$ at each time-step.

\begin{table}[ht!]
\centering
\caption{Definitions of table entries.}
\label{tab:defs}
    \begin{tabular}{ | l| l |}
    \hline
    M              & Method used by the odeqr solver \\ \hline
    TOL               & Absolute and relative error tolerance (always taken to be equal)            \\ \hline
    $h_{\text{mean}}$ & Mean step-size \\ \hline
     nexp             & Number of explicit steps taken \\ \hline
     nimp             & Number of implicit steps taken \\ \hline
     Feval            & Total number of evaluations by the ODE right-hand-side function\\ \hline
     Jaceval         & Total number of evaluations of the Jacobian $A(t)$\\ \hline
     Lsol         & Total number of linear solves\\ \hline
      NA              & Not applicable \\ \hline

\hline     
\end{tabular}
\end{table}
\FloatBarrier
\hspace{2cm}
\begin{table}[ht!]
\caption{Experimental results for the explicit method Mcpb1 = HEU(2-2-1), the AN-stable implicit method Mcpb2 = SDIRK(2-2-1), and the IMEX-QR-RK method Mcpb3 = HEU(2-2-1)-SDIRK(2-2-1) solving the compost bomb ODE \protect\eqref{eq:compbomb} on the time interval $[0,80]$ for various values of TOL and $\nu = 0.09, 0.3$.  The QR-IMEX-RK parameters used were $d_1 = -2.0$ and $d_2 = 2.0$.  The quantity $H_0$ was computed by taking the mean step-size on the interval $[2,5]$ for $\nu =0.3$ and $[2,20]$ for $\nu = 0.09$ and using $\alpha =0.1$.  Jacobians are formed using finite difference approximations, the maximum step-size allowed was $0.5$, and the initial step-size is $h_0 = 0.05$.  See Table \ref{tab:defs} for definitions of the table entries.}
\label{tab:imex_compbomb}
\small
    \begin{tabular}{ | l |l |l |l | l | l | l | l | l | l | l | l | l}
\hline
$\nu = 0.09$ & & & & & & & &\\\hline
    M  & TOL & $H_0$ & $h_{\text{mean}}$  & nexp  & nimp  & Feval & Jaceval & Lsol\\ \hline
Mcpb1  & 1E-4 & 4.720E-3 & 2.369E-3 & 33751  & NA     & 135004 & NA     & NA\\ \hline
Mcpb2  & 1E-4 & 4.720E-3 & 6.179E-3 &  NA     & 12942 & 159880  & 134074  & 27044\\ \hline
Mcpb3  & 1E-4 & 4.720E-3 & 6.175E-3 & 3849   & 9103   & 124632  & 116964 & 18212 \\ \hline \hline

Mcpb1  & 1E-5 & 4.718E-4 & 1.351E-3  & 59234  & NA     & 236936 & NA     & NA\\ \hline
Mcpb2  & 1E-5 & 4.718E-4 &  1.954E-3  & NA     & 40943 & 494606 & 412840  & 82733\\ \hline
Mcpb3  & 1E-5 & 4.718E-4 & 1.954E-3  & 13123  & 27830  & 386436  & 360210  & 55660\\ \hline \hline

Mcpb1  & 1E-6 & 6.143E-4 &  5.627E-4 & 142173 & NA     & 568692  & NA & NA\\ \hline
Mcpb2  & 1E-6 & 6.143E-4 & 6.178E-4 &  NA     & 129492 & 1552910 & 1294114 & 258773\\ \hline
Mcpb3  & 1E-6 & 6.143E-4 & 6.177E-4 & 44658  & 84844   & 1196232 & 1106926 & 169558\\ \hline \hline

$\nu = 0.30$ & & & & & & & & \\ \hline
    M  & TOL & $H_0$ & $h_{\text{mean}}$  & nexp  & nimp  & Feval & Jaceval & Lsol\\ \hline
Mcpb1 & 1E-4 & 1.492E-3 & 1.005E-3  &  79611  & NA     & 318444 & NA     & NA\\ \hline
Mcpb2 & 1E-4 & 1.492E-3 & 3.954E-3  &  NA     & 20202  & 248622  & 208386  & 41988\\ \hline
Mcpb3 & 1E-4 & 1.492E-3 &  3.947E-3  &  7788   & 12447 & 180516  & 164980  & 24902\\ \hline \hline

Mcpb1 & 1E-5 & 1.943E-3 &  6.664E-4 &  120031 & NA    & 480124 & NA & NA \\ \hline
Mcpb2 & 1E-5 & 1.943E-3 &  1.251E-3 & NA     & 63942  & 769170 & 641516 & 128397\\ \hline
Mcpb3 & 1E-5 & 1.943E-3 &  1.251E-3 &  27048  & 36927 & 551288 & 497252 & 73859\\ \hline \hline

Mcpb1 & 1E-6 & 1.939E-3 & 3.196E-4 &  250297 & NA      & 1001188 & NA     & NA\\ \hline
Mcpb2 & 1E-6 & 1.939E-3 & 3.955E-4 &  NA     & 202264  & 2425590 & 2021310 & 404183\\ \hline
Mcpb3 & 1E-6 & 1.939E-3 & 3.954E-4 &  39943  & 162368  & 2106332 & 2026556 & 324924 \\ \hline

\hline     
\end{tabular}

\end{table}

\begin{table}[ht!]
\caption{Table of results for experiments on the spatially discretized Fitzhugh-Nagumo PDE \protect\eqref{eq:nagumoode} solved on the time interval $[0,200]$ using $J=14$.  The methods are the explicit method Mfhn1 = BS(4-2-3) and the QR-IMEX-RK methods Mfhn2 = BS(4-2-3)-ESDIRK(4-2-3), Mfhn3 = BS(4-2-3)-SDIRK(4-2-3) , and Mfhn4 = BS(4-2-3)-SDIRK(3-2-3).  The QR-IMEX-RK parameters were $d_1 = -3.5$ and $d_2 = 10.0$ and $H_0$ was computed by taking the mean step-size on the interval $[2,20]$ and using $\alpha = 0.5$.  Jacobians are formed exactly, the maximum step-size allowed is $0.5$, and the initial step-size was $h_0 = 0.05$.}
\label{tab:imex_nagpde}
\small
    \begin{tabular}{ | l  |l|l | l | l | l | l | l | l | l |}
    \hline
    M                 & TOL & $H_0$ &  $h_{\text{mean}}$ & nexp  & nimp & Feval & Jaceval & Lsol \\ \hline
Mfhn1 & 1E-4 & 5.383E-3 &  1.040E-2 & 9621 & NA     & 76968 & NA & NA\\ \hline 
Mfhn2 & 1E-4 & 5.383E-3 & 1.038E-2 & 9634 & 2     & 77128 & 19320 & 5\\ \hline 
Mfhn3 & 1E-4 & 5.383E-3 &  1.039E-2 & 9619 & 2   & 77008 & 19290 & 5\\ \hline 
Mfhn4 & 1E-4 & 5.383E-3 & 1.038E-2 & 9635 & 2     & 77128 & 19316 & 6\\ \hline   \hline

Mfhn1 & 1E-6 & 5.294E-3 &  1.013E-2 & 9873 & NA   & 78984 & NA & NA \\ \hline 
Mfhn2 & 1E-6 & 5.294E-3 & 1.027E-2 & 9353 & 383   & 85312 & 28428 & 928\\ \hline 
Mfhn3 & 1E-6 & 5.294E-3 &  1.031E-2 &  9358 & 346 & 84844 & 28028 & 904 \\ \hline 
Mfhn4 & 1E-6 & 5.294E-3 & 1.012E-2 &  9287 & 598  & 85690 & 29370 & 1301\\ \hline \hline

Mfhn1 & 1E-8 & 5.293E-3 & 7.946E-3 & 12586 & 0    & 100688 & NA & NA\\ \hline  
Mfhn2 & 1E-8 & 5.293E-3 & 8.730E-3 & 9067 & 2390  & 130296 & 71114 & 4830\\ \hline 
Mfhn3 & 1E-8 & 5.293E-3 & 8.927E-3 & 9158 & 2044  & 122776 & 63740 & 4145 \\ \hline 
Mfhn4 & 1E-8 & 5.293E-3 & 7.957E-3 & 9000 & 3568  & 136584 & 79016 & 7196\\ \hline \hline 

Mfhn1 & 1E-10 & 5.293E-3 & 3.689E-3 & 27109 & 0       & 216872 & NA & NA\\ \hline  
Mfhn2 & 1E-10 & 5.293E-3 & 4.768E-3 & 8352  & 12623   & 370560 & 295202 & 25345\\ \hline 
Mfhn3 & 1E-10 & 5.293E-3 & 5.183E-3 & 9365  & 11041   & 331424 & 259818 & 22133 \\ \hline 
Mfhn4 & 1E-10 & 5.293E-3 & 2.866E-3 & 0     & 34892   & 672350 & 637467 & 77167\\ \hline

\hline     
\end{tabular}

\end{table}


\FloatBarrier
\section{Afterword}\label{sec:afterword}

We have used QR approximation theory for Lyapunov and Sacker-Sell spectra to develop a time-dependent stability theory for one-step methods approximating time-dependent solutions to nonlinear and nonautonomous ODE IVPs.  This theory was used to justify characterizing the stability of a one-step method solving an ODE IVP with real-valued, scalar, nonautonomous linear test equations.  In the companion paper \cite{SVV2017dcds} we use invariant manifold theory for nonautonomous difference equations to prove the existence of an underlying one-step method for general linear methods solving time-dependent problems.  This is then used to extend our analysis of one-step methods to general linear methods.  It should also be possible to extend the theory developed in this paper to infinite dimensional IVPs (using the infinite dimensional QR approximation theory developed in \cite{BVV}) arising from PDEs where the step-size restriction will in general be replaced by a time-dependent CFL condition.

Detecting, quantifying, and understanding stiffness has the been a major research focus of the time discretization community for the past 60 years.  The methods we have developed in this work can be advantageous for problems with e,g. non-normal Jacobians where standard stiffness detection techniques, such as those using logarithmic norms or time-dependent eigenvalues, can potentially fail.  Our techniques are justifiable in terms of Lyapunov and Sacker-Sell spectral theory and at a low computational cost produce qualitatively the same information in situations where existing methods are effective and meaningful information where existing methods are ineffective.  In addition to providing quantitative information about stiffness, the QR and Steklov average based approach can provide other relevant information since the same quantities are used to estimate Lyapunov exponents and Sacker-Sell spectra.


\section{Appendix}\label{sec:appendix}

We present Butcher tableaux for the various Runge-Kutta methods used in Section \ref{sec:numexp} and state their stability and accuracy properties.  Methods are written in the form 
$$\begin{array} {c|c} 
c       & A \\ \hline
p       & b^T \\ 
\hat{p} & \hat{b}^T
\end{array} $$
where $A$ is the Runge-Kutta matrix with coefficient vectors $c$ and $b$.  The integer $p$ denotes the order of the method used to update the approximate solution and $\hat{p}$ denotes the order of the embedding method used to estimate the local error.

\begin{figure}[htb!]
$$\begin{array}{c|cc}
0 & 0   & \\
1 & 1   & 0 \\  \hline
1 & 1   &  0   \\
2 & 1/2 & 1/2  
\end{array} \quad 
\begin{array}{c|cc}
1 & 1   &      \\
0 & -1  & 1    \\  \hline
2 & 1/2 & 1/2 \\
1 & 1 & 0  
\end{array} $$
\caption{The Heun-Euler method HEU(2-2-1) and the SDIRK(2-2-1) methods.  The SDIRK(2-1-2) method is AN-stable with an A-stable embedding.}
\label{fig:heuneuler}
\end{figure}

\begin{figure}[htb!]
$$\begin{array}{c|ccccccc}
0    & 0                  &                     &                    &          &               &          & \\ \smallskip
1/5  &  1/5               &                     &                    &          &               &          & \\ \smallskip
3/10 &  3/40              & 9/40                &                    &          &               &          & \\ \smallskip
4/5  & 44/45              & -56/15              & 32/9               &          &               &          & \\ \smallskip
8/9  & \frac{19372}{6561} & -\frac{25360}{2187} & \frac{64448}{6561} & -212/729 &               &          &\\ \smallskip
1    & \frac{9017}{3168}  & -355/33     & \frac{46732}{5247} & 49/176   & -\frac{5103}{18656}   &          & \\ \smallskip
1    & 35/384     & 0                   & \frac{500}{1113}   & 125/192  & -\frac{2187}{6784}    & 11/84    & \\ \hline  \smallskip
5    & 35/384     & 0                   & \frac{500}{1113}   & 125/192  & -\frac{2187}{6784}    & 11/84    & \\ \smallskip
4    & \frac{5179}{57600} & 0                   & \frac{7571}{16695} & 393/640  & -\frac{92097}{339200} & \frac{187}{2100} & 1/40
\end{array} \quad \begin{array}{c|cccc}
0   & 0    &  0  & 0   & 0 \\
1/2 & 1/2  & 0   & 0   & 0\\
3/4 & 0    & 3/4 & 0   & 0 \\
1   & 2/9  & 1/3 & 4/9 & 0 \\ \hline
3   & 2/9  & 1/3 & 4/9 & 0  \\ 
2   & 7/24 & 1/4 & 1/3 & 1/8
\end{array} $$
\caption{The DP(7-5-4) method \cite{DP1980} (left) and the BS(4-2-3) method \cite{BS1989} (right).}
\label{fig:explicit}
\end{figure}

\begin{figure}[htb!]
$$\begin{array}{c|ccc}
5/6    & 5/6     &  0      & 0 \\
29/108 & -61/108 & 5/6     & 0 \\
1/6    & -23/108 & -33/61  & 5/6 \\ \hline
  3    & 25/61   & 36/61   & 0   \\
  2    & 26/61   & 324/671 & 1/11
\end{array} \quad \begin{array}{c|cccc}
1/4   & 1/4     & 0       & 0      & 0 \\
11/28 & 1/7     & 1/4     & 0      & 0 \\
1/3   & 61/144  & -49/144 & 1/4    & 0 \\
1     & 0       & 0       & 3/4    & 1/4 \\ \hline
3     & 0       & 0       & 3/4    & 1/4  \\
2     & -61/600 & 49/600  & 79/100 & 23/100
\end{array}$$
\caption{The SDIRK(3-2-3) method (left) \cite{NorsettThomsen1984} and SDIRK(4-2-3) method (right) \cite{CameronEtal2002}.  For SDIRK(3-2-3) both the 3rd order method and the 2nd order embedding are A-stable.  For SDIRK(4-2-3) the 3rd order method is A-stable and L-stable while the 2nd order embedding is A-stable. }
\label{fig:SDIRK}
\end{figure}

\begin{figure}[htb!]
$$ \begin{array}{c|cccc}

0  & 0     & 0       & 0      & 0 \\ \smallskip
\frac{1767732205903}{2027836641118} & \frac{1767732205903}{4055673282236}     & \frac{1767732205903}{4055673282236}      & 0      & 0 \\ \smallskip
\frac{3}{5}  & \frac{2746238789719}{10658868560708} & -\frac{640167445237}{6845629431997} & \frac{1767732205903}{4055673282236}       & 0 \\ \smallskip
1     & \frac{1471266399579}{7840856788654}  & -\frac{4482444167858}{7529755066697} & \frac{11266239266428}{11593286722821} & \frac{1767732205903}{4055673282236} \\ \hline \smallskip
3     & \frac{1471266399579}{7840856788654}  & -\frac{4482444167858}{7529755066697} & \frac{11266239266428}{11593286722821} & \frac{1767732205903}{4055673282236}  \\ \smallskip
2     & \frac{2756255671327}{12835298489170} & -\frac{10771552573575}{22201958757719}  & \frac{9247589265047}{10645013368117} & \frac{2193209047091}{5459859503100}

\end{array}$$
\caption{The ESDIRK(4-2-3) method \cite{CK2003} is A-stable and L-stable while the embedding is only A-stable.}
\label{fig:ESDIRK}
\end{figure}

\FloatBarrier




\bibliographystyle{plainnat}      
\bibliography{bib_onestep}

\end{document}